\newcommand{\field}[1]{\mathbb{#1}}
\newcommand{\R}{\field{R}}
\newcommand{\Z}{\field{Z}}
\newtheorem{theorem}{Theorem}[section]
\newtheorem{lemma}[theorem]{Lemma}
\newtheorem{corollary}[theorem]{Corollary}
\newtheorem{proposition}[theorem]{Proposition}
\newtheorem{definition}[theorem]{Definition}
\theoremstyle{definition}
\newtheorem{remark}[theorem]{Remark}
\newcommand{\sign}{\mbox{sign}}
\begin{document}

\parskip 3pt

\title{Lie algebras of curves and loop-bundles on surfaces}

\author[J. Alonso]{Juan Alonso}\author[M.  Paternain]{Miguel Paternain}\author[J. Peraza]{Javier Peraza}

\address{\\Universidad de la Rep\'ublica, Centro de Matem\'atica,  Facultad de Ciencias, Igu\'a 4225, 11400 Montevideo
\\URUGUAY}
\email{juan@cmat.edu.uy}
\email{miguel@cmat.edu.uy}
\email{jperaza@cmat.edu.uy}

\author[M. Reisenberger]{Michael Reisenberger}

\address{\\Universidad de la Rep\'ublica, Instituto de F\'\i sica,  Facultad de Ciencias, Igu\'a 4225, 11400 Montevideo
\\
URUGUAY}

\email{miguel@fisica.edu.uy}

\keywords{loop spaces, Goldman bracket}
%\subjclass{}

\maketitle

\begin{abstract}
W. Goldman and V. Turaev defined a Lie bialgebra structure on the $\mathbb Z$-module generated by free homotopy classes of loops of an oriented surface (i.e. the conjugacy classes of its fundamental group). We develop a generalization of this construction replacing homotopies by thin homotopies, based on the combinatorial approach given by M. Chas. We use it to give a geometric proof of a characterization of simple curves in terms of the Goldman-Turaev bracket, which was conjectured by Chas.

\end{abstract}

\section{Introduction}

\label{sec:intro}

Goldman \cite{MR846929} and  Turaev \cite{MR1142906}  defined a Lie bialgebra structure on the $\mathbb Z$-module generated by
the free homotopy classes of loops of an oriented surface $M$ (i.e. the conjugacy classes of $\pi_1(M)$). The
bracket is defined by
\begin{equation}\label{goldman}
[X ,Y ]_{\pi_1(M)}=\sum_{p\in \alpha\cap \beta}\epsilon(p; \alpha, \beta) \{\alpha_p \beta_p\}    
\end{equation}
where $\alpha$ and $\beta$ are representatives of $X$ and $Y$
respectively, intersecting at most at  transversal double points, the
number $\epsilon(p; \alpha, \beta)=\pm 1$ denotes the oriented
intersection number of $\alpha$ and $\beta$ at $p$, and $ \{\alpha_p \beta_p\} $ is the conjugacy class of the element $\alpha_p\cdot \beta_p\in
\pi_1(M, p)$ where  $\alpha_p$ and $\beta_p$  are the elements
of $\pi_1(M, p)$ that correspond to reparametrize $\alpha$ and $\beta$ to start (and end) at $p$.

Turaev showed in \cite{MR1142906} that there is a colagebra
structure that gives rise to a Lie bialgebra. Chas in
\cite{MR2041630} proposed a combinatorial model for this bialgebra
structure. The aim of this paper is to develop a
generalization of the Goldman-Turaev construction replacing
homotopies by thin homotopies.

Let $M$ be an oriented surface endowed with any Riemannian metric. We denote by $\mathcal E(M)$ the set of classes of piecewise geodesic curves in $M$ modulo endpoint-preserving {\em thin homotopies}, which we shall define with precision in section \ref{s.prelim}. For each $x\in M$ we define ${\mathcal L}_x(M)$ as the elements of $\mathcal E(M)$ that start and end at $x$. It will be easy to notice that this is a group under concatenation, and $\pi_1(M,x)$ is a quotient of it.

 Let $S(M)$, the {\em space of strings} of $M$, be the set of conjugacy classes of
${\mathcal L}_x(M)$ in $\mathcal E(M)$, i.e. $g, h\in \mathcal L_x(M)$  are conjugate if there is $p\in \mathcal E(M)$ such that $p^{-1}gp=h$.  Let $\mathcal S(M)$  be the free abelian group generated by $S(M)$. In this paper we shall define a bracket $[ \; , \; ]$ on
$\mathcal S(M)$ following the lines of Chas in \cite{MR2041630}. Then we shall show

\begin{theorem}\label{jacobi}
$(\mathcal S(M), [\;,\;])$ is a Lie algebra.
\end{theorem}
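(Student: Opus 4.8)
The plan is to follow Chas's combinatorial strategy for proving the Jacobi identity, but adapted to the thin-homotopy setting where curves are piecewise geodesic and intersections are controlled geometrically rather than combinatorially on a fixed polygonal presentation. First I would establish the basic well-definedness of the bracket on $\mathcal S(M)$: given three strings $X,Y,Z$ represented by piecewise geodesic loops $\alpha,\beta,\gamma$ in general position (pairwise transversal double points, no triple points), the expression $[[X,Y],Z]$ expands as a double sum over pairs $(p,q)$ where $p\in\alpha\cap\beta$ and $q$ lies on the concatenated curve $\alpha_p\beta_p$ meeting $\gamma$. The key observation, exactly as in the Goldman case, is that $q$ is then either a point of $\alpha\cap\gamma$ or a point of $\beta\cap\gamma$, so the terms of $[[X,Y],Z]$ are naturally indexed by ordered triples of intersection points, one from each pairwise intersection, with a sign that is a product $\epsilon(p)\epsilon(q)$ of two of the three oriented intersection numbers.

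Next I would write out the cyclic sum $[[X,Y],Z]+[[Y,Z],X]+[[Z,X],Y]$ and organize all resulting terms by the underlying unordered data: a triple $(p,q,r)$ with $p\in\beta\cap\gamma$, $q\in\gamma\cap\alpha$, $r\in\alpha\cap\beta$. For each such geometric triple there are six contributions to the cyclic sum (two from each of the three double brackets, since in each bracket one of the three points plays no role in the first slot). The heart of the proof is a local cancellation lemma: for a fixed triple $(p,q,r)$, the six associated terms cancel in pairs, or more precisely their signed sum of conjugacy classes is zero. This is established by a purely local analysis near the "curvilinear triangle" formed by the three arcs, comparing the based loops obtained by the various concatenations and checking that conjugacy in $\mathcal E(M)$ — which is coarser than in $\pi_1$, but the relevant relations are thin-homotopy moves of bigon/triangle type — identifies them with the correct cancelling signs. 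I would isolate this as the combinatorial core, following Chas's bracket computation, and verify that each identification used is realized by an explicit thin homotopy of piecewise geodesic curves (push one arc across the triangle), so that no genuine (non-thin) homotopy is needed.

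The two preliminary technical points that must be handled are: (i) independence of the bracket from the choice of representatives in general position — here I would invoke a general-position lemma (any two piecewise geodesic representatives of a string differing by a thin homotopy can be connected through a family in general position except at finitely many elementary moves, and the bracket is invariant under each such move), which presumably was set up in the preliminary section; and (ii) the reduction to the case of no triple intersection points among $\alpha,\beta,\gamma$, achieved by a small perturbation, again through thin homotopies, that separates any triple point into three nearby double points without changing the strings. Antisymmetry $[X,Y]=-[Y,X]$ is immediate from $\epsilon(p;\alpha,\beta)=-\epsilon(p;\beta,\alpha)$ and $\{\alpha_p\beta_p\}=\{\beta_p\alpha_p\}$, so only the Jacobi identity requires real work.

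The main obstacle I anticipate is the local cancellation lemma in the thin-homotopy category: in Goldman's original argument the cancellation of the six terms at a triple point is a clean statement in $\pi_1$, but here one must check that the needed equalities of conjugacy classes already hold modulo thin homotopy, i.e. that the elementary moves implementing them are thin. Making this precise requires a careful description of how a small triangular region can be traversed by the three concatenated loops and an explicit thin homotopy (a "finger move" across a geodesic triangle, which is null in the thin sense because it sweeps a contractible piecewise-geodesic disk) realizing each identification. Once this local lemma is in place with the correct signs, summing over all triples $(p,q,r)$ gives Jacobi, and the proof of Theorem~\ref{jacobi} follows the same bookkeeping as in \cite{MR2041630}.
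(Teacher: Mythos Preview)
Your plan has a genuine gap at its starting point. In the thin-homotopy category each class in $\mathcal E(M)$ has a \emph{unique} reduced piecewise-geodesic representative, so there is no freedom to perturb: you cannot put three given strings into ``general position with pairwise transversal double points'' by a thin homotopy. Two distinct strings may be forced to share a geodesic arc, and this overlap cannot be removed without leaving the thin class. This is exactly why the paper defines the bracket via linked pairs (Definitions~\ref{def:lp} and~\ref{d:bracket}), which allow core segments $\eta=\xi$ or $\eta=\xi^{-1}$ of positive length, and why linked pairs between $X$ and $Y$ may only appear between powers $X^n$ and $Y^m$ (Remark~\ref{r:lp2}). Your expansion of $[[X,Y],Z]$ as a sum over transverse double points simply does not match the bracket being studied. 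Relatedly, your claimed ``finger move across a geodesic triangle, which is null in the thin sense'' is false: thin homotopy is generated by collapsing spikes $aa^{-1}$, and a nondegenerate geodesic triangle is not thin-nullhomotopic even though it bounds a disk.

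The paper sidesteps all of this by a reduction to the classical Goldman--Turaev case. Given $X,Y,Z$ with cyclically reduced representatives $\alpha,\beta,\gamma$, it sets $C=\alpha(I)\cup\beta(I)\cup\gamma(I)$, a one-dimensional complex with geodesic edges (Proposition~\ref{alphabet}). Lemma~\ref{chas20} produces smooth transversal curves in the thickened surface $C_\varepsilon$ whose intersection points are in sign- and dot-product-preserving bijection with the linked pairs in $LP_2$, so that the retraction induces a Lie-algebra isomorphism $\chi_*:(\mathcal S_1(C_\varepsilon),[\;,\;]_{\pi_1})\to(\mathcal S(C),[\;,\;]_C)$ (Lemma~\ref{cor:inc}). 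The Jacobi identity for $X,Y,Z$ then holds in $\mathcal S(C)\subset\mathcal S(M)$ because it holds in the Goldman--Turaev algebra of the surface $C_\varepsilon$. In short, the paper never attempts a direct cancellation in the thin category; it transports Jacobi from the ordinary-homotopy setting via the complex $C$.
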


It is also possible to give a coalgebra structure and show that $\mathcal S(M)$ is a Lie bi-algebra using the same techniques, though we will not present this construction explicitly. We will see that the Goldman-Turaev structure on $M$ is the quotient of $\mathcal S(M)$ obtained by taking regular (non-thin) homotpies. This fact will emerge naturally from our construction of the bracket, and the case for the co-bracket is analogous. The Goldman algebra is relevant in the study of spaces of representations of the fundamental groups of $2$-manifolds, which in turn, can be regarded as  moduli spaces of flat connections on orientable $2$-manifolds (\cite{jeffrey2005flat}). Our construction may also play a role in the study of the space of all connections but this aspect is not going to be treated here.

Chas and Krongold obtained an algebraic characterization of homotopy classes of simple curves in terms of the bracket  (\cite{chas2010algebraic}) and the cobracket (\cite{chas2016algebraic}) .
(See also \cite{chas2016extended} and  \cite{chas2021lie}).  In the present paper we give   a
geometric  proof of a different characterization of simple curves in terms of the bracket, which was conjectured by Chas in \cite{MR2041630}. For other related results see  \cite{chas2010self}, \cite{chas2012self} \cite{chas2015goldman}  \cite{chas2010minimal}, \cite{chas2019almost}, 
\cite{chas2015relations}.

A string $X$ is {\em primitive} if every representative $g\in \mathcal L_x(M)$ (for any $x\in M$) of $X$ is primitive in the sense of group theory: if there is no $h\in \mathcal L_x(M)$ such that $g=h^n$ with $n>1$. A piecewise geodesic closed curve will be called {\em simple} if it has no stable self-intersections, i.e. if there is a small perturbation that has no self-intersections (namely, that is simple in the usual sense).

We shall show the following theorem:

\begin{theorem}\label{simple}
Let $M$ be an oriented  surface. A primitive $X\in S(M)$ has a simple
representative if and only if $[X , X^{-1}]=0$.
\end{theorem}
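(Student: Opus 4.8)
The plan is to prove both implications geometrically, using the combinatorial structure of the bracket on $\mathcal S(M)$ and, crucially, the fact that in the thin-homotopy setting intersection points cannot be removed by sliding in the way they can for ordinary homotopy. One direction is essentially formal: if $X$ has a simple representative $\alpha$, then a small perturbation of $\alpha$ is an embedded closed curve, and a small perturbation of $\alpha^{-1}$ can be taken to be a disjoint parallel copy (pushed off along the normal bundle, which is trivial for a two-sided curve on an oriented surface). Then $\alpha\cap\alpha^{-1}=\emptyset$ after perturbation, so every term in the defining sum for $[X,X^{-1}]$ is absent and the bracket vanishes. The only subtlety is checking that the representatives used to compute the bracket can indeed be taken to be these perturbed curves, i.e. that the bracket is computed from any pair of representatives meeting transversally in double points, which should follow from the well-definedness built into the construction preceding Theorem \ref{jacobi}.

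The interesting direction is the converse: if $X$ is primitive and has no simple representative, then $[X,X^{-1}]\neq 0$. First I would fix a representative $\alpha$ of $X$ realizing the minimal self-intersection number, which by hypothesis is positive; by a small perturbation we may assume $\alpha$ has only transversal double points, and then $\alpha^{-1}$ is realized by a curve $\bar\alpha$ obtained by reversing orientation and pushing off slightly, so that $\alpha$ and $\bar\alpha$ meet transversally. The terms of $[X,X^{-1}]$ are indexed by intersection points of $\alpha$ with $\bar\alpha$: these come in two types — the "diagonal" points near the self-intersections of $\alpha$ (each self-intersection $p$ of $\alpha$ contributes a cluster of intersection points between the two copies) and possibly spurious points coming from the pushing-off, which should cancel in pairs or be arrangeable to not occur. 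The heart of the matter is to show that the resulting sum of conjugacy classes $\sum \epsilon(q;\alpha,\bar\alpha)\{\alpha_q\bar\alpha_q\}$ does not completely cancel.

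The main obstacle, and where the bulk of the work lies, is precisely this non-cancellation argument. The standard strategy (going back to the homotopy-class case) is to identify a "maximal" or "extremal" term that cannot be matched by any other: one orders the resulting free homotopy classes by word length (or by a suitable filtration adapted to the thin-homotopy refinement $\mathcal E(M)$), and argues that the longest class appearing, coming from a self-intersection point $p$ of $\alpha$ that splits $\alpha$ into two loops $\alpha_p',\alpha_p''$ based at $p$, occurs with a nonzero net coefficient. Here primitivity of $X$ is essential: it guarantees that $\alpha$ is not a proper power, which is what prevents the two loops $\alpha_p'$ and $\alpha_p''$ at an extremal intersection from being conjugate in a way that would force cancellation, and it rules out the degenerate coincidences that occur for powers of simple curves. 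I would prove this by combining: (i) a lower bound showing $\{\alpha_p'\alpha_p''\}$ has strictly greater complexity than the classes coming from the pushing-off artifacts and from non-extremal self-intersections; (ii) an injectivity statement — distinct extremal self-intersection points give distinct, non-cancelling conjugacy classes — which is where I expect to need a careful combinatorial lemma in the spirit of Chas's linked-pair analysis in \cite{MR2041630}, now transported to the thin-homotopy quotient where fewer identifications are available, so if anything the argument is easier than in the $\pi_1$ setting. Assembling (i) and (ii) gives a surviving term and hence $[X,X^{-1}]\neq 0$, completing the proof.
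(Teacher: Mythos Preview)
Your easy direction is fine in spirit, though in the paper it is even simpler: ``simple'' is defined to mean $LP_1(X)=\emptyset$, which by the discussion after Definition~\ref{def:T0} is equivalent to $LP_2(X,X^{-1})=\emptyset$, so the defining sum for $[X,X^{-1}]$ is empty.

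The hard direction, however, has a genuine gap. Your plan is to isolate an \emph{extremal} term---by word length or some filtration---and argue it survives. But you never say which term is extremal, nor why distinct extremal self-intersections yield distinct (thin-)conjugacy classes; step (ii) is asserted, not argued. More seriously, this ``one term survives'' strategy is exactly what had \emph{not} been carried out for the Goldman bracket---it is the content of Chas's conjecture---so the remark that the thin-homotopy setting ``if anything makes the argument easier'' does not substitute for an argument. Indeed the paper explicitly notes (just after Lemma~\ref{reversing2}) that it is \emph{unable} to show that distinct linked pairs give distinct strings; your injectivity claim (ii) is precisely the open point.

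The paper's route is structurally different and avoids extremality altogether. It encodes each linked pair between $\alpha$ and $\alpha^{-1}$ as an element $g\in T_0(\alpha)\subset\mathcal L_x(M)$ via the infinite lift $\Lambda_\alpha\subset\mathcal E_x(M)$, and writes the corresponding dot product as the conjugacy class of the commutator $[\alpha]g[\alpha]^{-1}g^{-1}$ (Lemma~\ref{formula2}). The main work is then to prove that whenever two such commutators are conjugate, the associated signs agree (Lemma~\ref{general2}); hence no cancellation is possible and $[X,X^{-1}]=0$ forces $T_0(\alpha)=\emptyset$. The proof of this ``equal class $\Rightarrow$ equal sign'' statement splits on whether $g$ preserves or reverses the orientation of $\Lambda_\alpha$: the reversing case is handled by reducing to a quotient complex with a \emph{unique} intersection (Lemmas~\ref{quotient} and~\ref{reversing2}), while the preserving case uses the notion of \emph{$\alpha$-oriented subgroups} (Lemma~\ref{oriented}) to control how translates of $\Lambda_\alpha$ can meet, plus a perturbation to a valence-$3$ complex (Lemmas~\ref{approximation} and~\ref{sign}) to read off the sign locally. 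None of this machinery---the loop bundle viewpoint, the orientation dichotomy for $T(\Lambda_\alpha)$, $\alpha$-oriented subgroups, or the valence-$3$ deformation---appears in your outline, and it is not clear how a length-filtration argument would replace it.
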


The geometric group theory approach of our  proof of Theorem \ref{simple} may have some independent interest beyond the result itself.  The aforementioned proof includes some tools, namely, the notion of $\alpha$-oriented subgroups (see section \ref{alpha_oriented}),  which could be useful in other contexts. 

Theorem \ref{simple} will allow us to prove a conjecture posed by  Chas (\cite{MR2041630}):

\begin{corollary}
Let $M$ be a compact oriented surface with boundary. A primitive free
homotopy class $X$ of $M$ has a simple representative if and only if
$[X , X^{-1}]_{\pi_1(M)}=0$.
\end{corollary}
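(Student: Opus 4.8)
The plan is to deduce the Corollary from Theorem~\ref{simple} by comparing the thin-homotopy bracket on $\mathcal S(M)$ with the Goldman--Turaev bracket on $\pi_1(M)$. As mentioned in the introduction, the Goldman--Turaev structure arises as a quotient: there is a natural surjection $\Pi \colon \mathcal S(M) \to \mathcal S_{\pi_1}(M)$ (the $\mathbb Z$-module on free homotopy classes) induced by collapsing thin homotopy classes to ordinary free homotopy classes, and this map is a Lie algebra homomorphism intertwining $[\;,\;]$ with $[\;,\;]_{\pi_1(M)}$. So for a string $X$ lying over a free homotopy class $\Pi(X) = \bar X$, we have $\Pi([X, X^{-1}]) = [\bar X, \bar X^{-1}]_{\pi_1(M)}$. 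The forward direction is then immediate: if $\bar X$ has a simple representative $\gamma$, we may take $\gamma$ to be piecewise geodesic (after a small perturbation and a homotopy, using that $M$ carries a Riemannian metric), and simplicity is preserved; then $\gamma$ defines a primitive string $X$ with $[X,X^{-1}] = 0$ by Theorem~\ref{simple}, whence $[\bar X, \bar X^{-1}]_{\pi_1(M)} = \Pi([X,X^{-1}]) = 0$. (One must check that a primitive free homotopy class lifts to a primitive string, which follows since $\mathcal L_x(M)$ surjects onto $\pi_1(M,x)$ and a power relation downstairs would have to be witnessed upstairs modulo thin homotopy.)

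For the converse we are given $[\bar X, \bar X^{-1}]_{\pi_1(M)} = 0$ and must produce a simple representative. First choose any piecewise geodesic representative, giving a primitive string $X$ over $\bar X$. The difficulty is that $\Pi$ is not injective, so $[X, X^{-1}] = 0$ need not follow from $[\bar X, \bar X^{-1}]_{\pi_1(M)} = 0$ --- the thin bracket could have cancelling-in-$\pi_1$ terms that are nonzero in $\mathcal S(M)$. Here is where the hypothesis that $M$ has nonempty boundary is essential: $\pi_1(M)$ is then free, so $M$ is a $K(\pi,1)$ with $\pi$ free, and one can appeal to the fact that for surfaces with boundary the combinatorial intersection theory of Chas computes the Goldman bracket honestly, i.e. one may choose the representative $\gamma$ of $\bar X$ to be taut/minimal for self-intersections (realizing the minimal self-intersection number in its free homotopy class). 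For such a minimal representative, every self-intersection point is ``stable'' and contributes a term to $[\bar X,\bar X^{-1}]_{\pi_1(M)}$ that does not cancel against any other --- this is precisely the content of Chas's combinatorial analysis on surfaces with boundary. Hence $[\bar X,\bar X^{-1}]_{\pi_1(M)} = 0$ forces $\gamma$ to have no self-intersections at all, i.e. $\bar X$ is simple.

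The main obstacle is the non-injectivity of $\Pi$ just described: one cannot simply transport Theorem~\ref{simple} across the quotient, so the converse really requires the extra input that on a surface with boundary the \emph{$\pi_1$-level} bracket already detects stable self-intersections without spurious cancellations. The cleanest route is to reduce to the thin setting after all: take a minimal self-intersection representative $\gamma$ of $\bar X$, note it is piecewise geodesic and primitive as a string $X$, and argue that its stable self-intersections (those surviving all perturbations) contribute terms to $[X,X^{-1}]$ whose images in $\mathcal S_{\pi_1}(M)$ are pairwise non-cancelling --- for this one uses that over a free group the string classes $\{\alpha_p\beta_p\}$ appearing are distinguished by their underlying conjugacy classes together with the combinatorial linking data, exactly as in \cite{MR2041630}. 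Then $[\bar X, \bar X^{-1}]_{\pi_1(M)} = 0$ gives $\gamma$ simple directly, and we are done; alternatively, having shown $\gamma$ simple we could invoke the forward direction's converse from Theorem~\ref{simple} as a consistency check. I would write the argument in the first form, isolating as a lemma the statement ``on a compact surface with boundary, a primitive class with a non-simple minimal representative has $[\bar X,\bar X^{-1}]_{\pi_1(M)} \neq 0$,'' whose proof is the combinatorial heart and where all the work lies.
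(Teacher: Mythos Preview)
Your identification of the obstacle is exactly right: the quotient $\Pi\colon \mathcal S(M)\to \mathcal S_{\pi_1}(M)$ is not injective, so vanishing of the Goldman bracket does not immediately lift to vanishing of the thin bracket. But your proposed resolution is circular. The lemma you isolate at the end --- ``on a compact surface with boundary, a primitive class with a non-simple minimal representative has $[\bar X,\bar X^{-1}]_{\pi_1(M)}\neq 0$'' --- is precisely the contrapositive of the hard direction of the Corollary. You attribute this to ``Chas's combinatorial analysis on surfaces with boundary,'' but this is the very statement Chas left as a \emph{conjecture} in \cite{MR2041630}; it is not proved there. So your argument never actually reduces to Theorem~\ref{simple}, and the boundary hypothesis is not used in any essential way beyond citing freeness of $\pi_1$.

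The paper's approach is different and avoids the non-injectivity problem entirely by going in the other direction: rather than pushing down from $\mathcal S(M)$, it constructs a \emph{section} of the quotient. Since $M$ has boundary, $\pi_1(M)$ is free on a set $T$ of generators coming from a fundamental polygon; choosing a metric of non-positive curvature, one lifts each generator $g\in T$ to a specific geodesic loop $h_g\in\mathcal L_x(M)$. These generate a subgroup $\mathcal L_{x,0}(M)\subset \mathcal L_x(M)$ on which the quotient to $\pi_1(M)$ is an \emph{isomorphism}. The corresponding set of strings $S_0(M)$ then spans a Lie subalgebra of $\mathcal S(M)$ that is isomorphic (not just surjecting onto) the Goldman--Turaev algebra. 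Now $[\bar X,\bar X^{-1}]_{\pi_1(M)}=0$ translates to $[X,X^{-1}]=0$ for the unique lift $X\in S_0(M)$, and Theorem~\ref{simple} applies directly. The boundary hypothesis is what makes $\pi_1(M)$ free, which is what makes the section exist.
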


\begin{proof}
Let $p:\tilde M\to M$ be the universal covering of $M$. Let
$D\subset \tilde M$ be a fundamental polygon as in
\cite{MR2041630}.  Since  $M$ has boundary  we can choose on $\tilde M$ a metric of constant non positive curvature. 

The fundamental group is freely generated  by the
set $T$ of those $g\in \pi_1(M)$ such that $gD\cap D\neq \emptyset$. Let
$\mathcal L_{x,0}(M)\subset \mathcal L_x(M)$ be the subgroup of  $\mathcal L_x(M)$
generated by those elements of the form $h_g=[p\circ\alpha_g]$ where
$\alpha_g$ is a geodesic (corresponding to the chosen metric) joining $x$ to $gx$ for each $g\in T$. Denote by $S_0(M)$ the set of strings corresponding to the loops in $\mathcal L_{x,0}(M)$. Note that
$\pi_1(M)$ is isomorphic to $\mathcal L_{x, 0}(M)$ via the standard quotient, i.e. taking non-thin homotopies. As we mentioned below Theorem \ref{jacobi}, this quotient is a Lie algebra homomorphism, thus it gives an isomorphism between the subalgebra of $\mathcal S(M)$ generated by $S_0(M)$ and the Goldman-Turaev algebra on the free homotopy classes of $M$. Then we apply Theorem \ref{simple} to strings in $S_0(M)$ to conclude this proof.
\end{proof}

\section{The loop bundle}

\subsection{Definitions of thin homotopies and the spaces of loops} \label{s.prelim}

Let $I$ be the unit interval and $M$ a Riemannian manifold. We begin by recalling some standard notations. A {\em path} in $M$ is a continuous function from $I$ to $M$, and  we say that two paths  $a,b:I \to M$ are equivalent  modulo
reparametrization if there is an orientation preserving homeomorphism $\sigma:I\to I$ such that $a\circ \sigma=b$.  
 Denote by $\Omega_0$ the quotient set under this
equivalence relation. If $a(1)=b(0)$ we define $ab$ and $a^{-1}$ as
follows: $ab(t)=a(2t)$ if $t\in [0,1/2]$ and $ab(t)=b(2t-1)$ if
$t\in [1/2,1]$; $a^{-1}(t) = a(1-t)$ for every $t\in [0,1]$. Let
$e_x \in \Omega_0$ be the contant path at $x$, i.e. $e_x(t) =x$ for every $t\in [0,1]$.

In order to define what we call {\em thin homotpy} between piecewise geodesic paths we need to consider another preliminary equivalence, which amounts to collapse constant sub-paths. Let  $a$ be a non-constant path in $M$. We shall define a {\em minimal form} $a_r$ for $a$ as follows: let $I_i \subset I$ be the maximal subintervals in which
$a$ is constant, and let $\sigma: I \to I$ be a surjective non-decreasing continuous function, constant in each $I_i$ and strictly
increasing in $I - \bigcup_i I_i$. Then there is $a_r: I \to M$ such that $a =a_r \circ\sigma$, which is non-constant on any subinterval of $I$ (this map is obtained by a universal property of quotients). Different choices of the function $\sigma$ give rise to minimal forms that are equivalent modulo reparametrization, and moreover, if two paths $a$ and $b$ are equivalent, so are any of their minimal forms $a_r$ and $b_r$. This allows us to define the {\em minimal class} of an element of $\Omega_0$ (as the class of any minimal form of any representative), and take a quotient $\Omega_1$  where we identify two elements of $\Omega_0$ if they have the same minimal class (extending the definition to constant paths in the trivial way). The product and inverse are well defined on $\Omega_1$, and the classes of constant paths are units for the product.

%The above choices allow us to define a unique function $a_r: I \to M$ such that $a =a_r \circ\sigma$. Notice that $a_r$ is non-constant on any subinterval of $I$ and that its equivalence class under reparametrization depends only on $a$ (and not on the arbitrary choices available for $a$ and $\sigma$). We identify elements of $\Omega_0$ having the same reduced class and denote by $\Omega_1$ the space of equivalence classes.  

%Let $\Gamma\subset \Omega_1$ be the set of classes represented by  geodesics. 
 %Let $\Omega$ be the set of finite concatenations of curves in $\Gamma$. For the rest of the paper we say that elements $\alpha\in \Omega$ are curves, which shall be represented by greek letters.

Let $\Omega\subset \Omega_1$ be the set of classes of either constant paths or paths that are {\em piecewise geodesic}, i.e. a finite concatenation of geodesic segments. 
Notice that for $\alpha\in \Omega$ there are well defined notions of endpoints $\alpha(0)$ and $\alpha(1)$, of image $\alpha(I)$, and of length $l(\alpha)$. Throughout the paper we will refer to the elements $\alpha \in \Omega$ as {\em curves}, and say that $\alpha$ is a {\em closed curve} if $\alpha(0)=\alpha(1)$. %We shall use greek letters to denote curves.

In the set $\Omega$ we consider the equivalence
relation generated by the identifications $\alpha aa^{-1}\beta\sim
\alpha\beta$. This is what we call equivalence under {\em thin homotopies}. With the formal definition in hand, we recall the concepts from the introduction: Let $\mathcal E(M)$ denote the quotient set of $\Omega$ under thin homotopies, and 
 let ${\mathcal L}_x(M)$ be the projection onto $\mathcal E(M)$ of the set of closed curves starting and ending at $x$. Note that
${\mathcal L}_x(M)$ is a group under concatenation 
whose identity element, $id_x$, is the equivalence class of $e_x$,
the constant path at $x$.

%%%%%%%%%%%%%%%%%5
\subsection{Reductions and basic properties}

A {\em reduction} for $\alpha\in\Omega$ is a factorization of the form $\alpha=acc^{-1}d$ with nontrivial $c$. We say that $\alpha$ is {\em reduced} if it admits no such reduction. Since the curves in $\Omega$ are classes of piecewise geodesic (or constant) paths, it is easy to show that every element of $\mathcal E(M)$ has a unique reduced representative in $\Omega$ (though the proof of uniqueness may be a bit cumbersome). The {\em reduced form} of $\alpha\in\Omega$ is the unique reduced curve that is equivalent to $\alpha$ under thin homotopy.

Next we point out some basic facts that will be used without explicit reference throughout the article. Firstly notice that a curve $\gamma\in\Omega$ satisfies $\gamma=\gamma^{-1}$ only when it is of the form $\gamma=cc^{-1}$, thus for a reduced curve $\gamma$  it happens only when $\gamma$ is constant. Next we see that the concept of length in $\Omega$ satisfies the expected properties, namely that:
\begin{itemize}
\item $l(ab)=l(a)+l(b)$, and
\item if $ab=cd$ with $l(a)=l(c)$, then $a=c$ and $b=d$.
\end{itemize}

For $\gamma,\,\delta\in \Omega$ we shall write $\gamma\subset\delta$ if we have $\delta=a\gamma b$ for $a,b\in\Omega$. In case $a$ is trivial we say that $\gamma$ is an {\em initial segment} of $\delta$, and if $b$ is trivial that $\gamma$ is a {\em final segment} of $\delta$. Note that if $\delta$ is reduced, so must be $\gamma$.

We say that two curves $\gamma$ and $\delta$ {\em overlap} if an initial segment of one of them agrees with a final segment of the other, i.e. if we can write either $\gamma=ab$, $\delta=bc$ with $b$ non-constant, or $\gamma=ab$, $\delta=ca$ with $a$ non-constant. Note that if $\gamma$ is reduced and non constant, then $\gamma$ and $\gamma^{-1}$ cannot overlap: for instance, if $\gamma=ab$ and $\gamma^{-1}=bc$, we get $b=b^{-1}$ where $b$ is reduced, so $b$ must be constant.

\subsection{Definitions of loop bundle and horizontal lift}

Consider the space $\mathcal E(M)$ defined in the previous section, and let $[\alpha]\in \mathcal E(M)$ stand for the equivalence class of $\alpha\in \Omega$. Let
$\mathcal E_x(M)$ be the set of the $[\alpha]\in \mathcal E(M)$ such that $\alpha(0)=x$; define $\pi:\mathcal E_x(M)\to M$ by
$\pi([\alpha])=\alpha(1)$, and observe that $\mathcal L_x(M)=\pi^{-1}(x)$. The group $\mathcal L_x(M)$ acts on $\mathcal E_x(M)$ by left multiplication and for all $[\alpha]\in \mathcal L_x(M)$ and $[\gamma]\in
\mathcal E_x(M)$ we have $\pi\big([\alpha][\gamma]\big)=
\pi([\gamma])$; hence the quadruple $(\mathcal E_x(M) , \mathcal
L_x(M), M,\pi) $ is a principal fiber bundle over $M$, with
structure group $\mathcal L_x(M)$.

Let $\gamma$ be a path in $M$, and take $p\in\mathcal E_x(M)$ with
$\pi(p)=\gamma(0)$. We define the {\em horizontal lift} of $\gamma$
at $p$ to be the path $\tilde{\gamma}$ in $\mathcal E_x(M)$ which
is obtained in the following way. Take $\beta$ any representative of
$p$ (i.e. $p=[\beta]$), and for each $s\in I$ set $\gamma_s$ to be
the path in $M$ defined by $\gamma_s(t)=\gamma(st)$. Then
$\tilde{\gamma}(s)=[\beta\gamma_s]$. This horizontal lift can be seen as a topological 
connection in the bundle $(\mathcal E_x (M), \mathcal L_x(M),
M,\pi)$. %%
 We will say that a path in $\mathcal
E_x(M)$ is {\em horizontal} if it can be obtained by horizontal lift (see
\cite{MR0077122}, \cite{MR0124068}). Note that the concept of horizontal lift is well defined at the level of curves (i.e. in $\Omega_1$), thus we may speak of {\em horizontal curves}. 

We define the {\em length of an horizontal curve} as the length of the projection. Observe that the action of $\mathcal L_x(M)$ preserves the set of horizontal curves, as well as their lengths (by definition). We should clarify that we are not giving a metric on $\mathcal E_x(M)$.

\subsection{Conjugacy classes in ${\mathcal L}_x$ and the space of strings.}

Recall that the {\em space of strings} $S(M)$ is the set of conjugacy classes of
${\mathcal L}_x(M)$ in $\mathcal E(M)$, i.e. $g, h\in \mathcal L_x(M)$  are conjugate if there is $p\in \mathcal E(M)$ such that $p^{-1}gp=h$. This does not depend on $x$ because of the following remark.

%Finally, $\mathcal S(M)$  is the free abelian group generated by $S(M)$. Note that a string $X\in S(M)$ is {\em primitive} if for every  closed $\alpha\in\Omega$ representing $X$ there is no $\gamma$ such that $\alpha=\gamma^n$ with $n>1$.

%---------

%Let $S(M)$, the space of strings, be the set of conjugacy classes of
%${\mathcal L}_x(M)$ in $\mathcal E(M)$. Note this does not depend on $x$ because of the following remark.
\begin{remark}\label{bpoint} (Change of basepoint)
If $x,y\in M$ and $\gamma_0:I\to M$ has
$\gamma_0(0)=x$ and $\gamma_0(1)=y$, let $p_0=[\gamma_0]$ and define
the maps $\psi: \mathcal E_x(M) \to \mathcal E_y(M)$ by
$\psi(p)=p_0^{-1}p$ and $\phi: \mathcal L_x(M)  \to \mathcal L_y(M)$ by $\phi(g)=p_0^{-1}gp_0$. Then $\phi$ is an isomorphism of groups, and $(\psi,\phi)$ is an isomorphism of fiber bundles over $M$, commuting with the horizontal lift.
\end{remark}

We say  that a closed curve $\alpha$  is {\em cyclically reduced}
if  $\alpha$  is reduced and it cannot be factorized as $cac^{-1}$
 with non trivial $c$. For a cyclically reduced curve $\gamma$, we say that $\beta\in\Omega$ is a {\em permutation} (or {\em cyclical permutation}) of $\gamma$ if there are $r,s\in\Omega$ such that $\gamma=rs$ and $\beta=sr$. If $s$ and $r$ are non constant we say that $\beta$ is a {\em non trivial  permutation} of $\gamma$. Note that permutation is an equivalence relation among the cyclically reduced curves in $\Omega$.

For a string $X\in S(M)$ we can find $x\in M$ and a cyclically
reduced curve $\alpha$ based at $x$, such that $X$ is the conjugacy class of $[\alpha]\in\mathcal L_x(M)$. On the other hand, permutation agrees with conjugacy in $\mathcal E_x(M)$ among cyclically reduced curves, therefore we have

%Let $\gamma$ be a cyclically reduced curve. We say that $\beta$ is a permutation of $\gamma$ if $\gamma=rs$ and $\beta= sr$.If  $s$ and $r$ are non constant we say that $\beta$ is a non trivial  permutation of $\gamma$ 

% Note that permutation is an equivalence relation among the cyclically reduced curves in $\Omega$. 

\begin{remark}
 There is a bijection between $S(M)$
and the permutation classes of cyclically reduced curves.
\end{remark}

Throughout the paper, when we take representatives of strings we will always assume them to be cyclically reduced. If $X\in S(M)$ and $\alpha$ is a representative of it, we define $X^{-1}$ as the permutation class of $\alpha^{-1}$. It is straightforward to check that if $\alpha$ is non-constant then $\alpha^{-1}$ is not a permutation of $\alpha$. Thus $X\neq X^{-1}$ unless $X$ is trivial.

 A cyclically reduced curve $\alpha$ is {\em primitive} if there is no $\gamma\in\Omega$ such that $\alpha=\gamma^n$ with $n>1$.  The following easy result is well known.

\begin{lemma}\label{primitive}
Let $\alpha$ be a cyclically reduced curve. Then $\alpha$ is not primitive if and only if $\alpha$ has a non trivial permutation $\hat\alpha$ such that
$\alpha=\hat \alpha$. 
\end{lemma}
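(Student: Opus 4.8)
The plan is to prove both implications using the basic length properties and the factorization facts recorded in Section~2. First I would handle the easy direction: if $\alpha$ is not primitive, write $\alpha = \gamma^n$ with $n>1$ and $\gamma$ nonconstant. Setting $r = \gamma$ and $s = \gamma^{n-1}$ we have $\alpha = rs$, so $\hat\alpha := sr = \gamma^{n-1}\gamma = \gamma^n = \alpha$ is a permutation of $\alpha$; it is nontrivial because both $r$ and $s = \gamma^{n-1}$ are nonconstant (here $n \ge 2$). One should also check $\hat\alpha$ is among the cyclically reduced curves, but since $\hat\alpha = \alpha$ this is immediate. So the nontrivial self-permutation exists.

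For the converse, suppose $\alpha$ has a nontrivial permutation $\hat\alpha$ with $\hat\alpha = \alpha$. By definition there are nonconstant $r,s \in \Omega$ with $\alpha = rs$ and $\hat\alpha = sr$, hence $rs = sr$. The idea is to extract a common ``root'' from the commuting relation $rs = sr$ by induction on $l(\alpha) = l(r) + l(s)$. Without loss of generality assume $l(r) \le l(s)$. Comparing $rs = sr$ as equalities of curves and using the cancellation property (if $ab = cd$ with $l(a) = l(c)$ then $a = c$, $b = d$), since $l(r) \le l(s)$ we may write $s = r s'$ for some $s' \in \Omega$ (the initial segment of $s$ of length $l(r)$ must coincide with the initial segment $r$ of $rs$). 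Substituting into $rs = sr$ gives $r r s' = r s' r$, and cancelling the leading $r$ yields $r s' = s' r$. If $s'$ is constant, then $s = r$, so $\alpha = rr = r^2$ and we are done with $n = 2$. If $s'$ is nonconstant, then $r s' = s' r$ is again a commuting pair of nonconstant curves with $l(r) + l(s') = l(s) < l(\alpha)$, so by induction $r$ and $s'$ are both powers of a common curve $\gamma$, say $r = \gamma^j$, $s' = \gamma^k$ with $j,k \ge 1$; then $s = r s' = \gamma^{j+k}$ and $\alpha = rs = \gamma^{2j+k}$ with $2j + k > 1$, so $\alpha$ is not primitive.

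The base case of the induction is when $l(\alpha)$ is minimal among cyclically reduced curves admitting such a relation; in that case the argument above forces $s'$ constant, giving $\alpha = r^2$. The main thing to be careful about is that all cancellations take place in $\Omega$ using the length axioms rather than in a free group, and that the curves produced along the way remain in $\Omega$ (which is automatic, as initial and final segments of elements of $\Omega$ lie in $\Omega$). One should also note that $r,s$ need not themselves be reduced or cyclically reduced, but this does not matter: the length-cancellation property holds in all of $\Omega$, and the final curve $\gamma$ with $\alpha = \gamma^n$ need only lie in $\Omega$, as required by the definition of primitivity. I expect the induction step — correctly setting up the cancellation $s = rs'$ and tracking that $l(s') < l(\alpha)$ strictly — to be the only point requiring genuine care; the rest is bookkeeping with lengths.
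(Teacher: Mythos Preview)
The paper offers no proof of this lemma (it is simply stated as well known), so there is nothing to compare your route against. Your forward direction is clean. For the converse your Euclidean-style descent is the right idea, but the induction as you set it up is not well-founded: you induct on $l(\alpha)$, a real number, and the step replaces $(r,s)$ by $(r,s')$ with $l(r)+l(s')=l(s)$. This is literally the Euclidean algorithm on the pair of reals $\bigl(l(r),l(s)\bigr)$, and that process terminates only when $l(r)/l(s)\in\mathbb Q$. Nothing in your write-up excludes the irrational case, so the recursion could in principle run forever; the point you flag as ``requiring genuine care'' (the length-cancellation bookkeeping) is actually routine, while the well-foundedness issue you do not mention is the real gap.

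The repair is short and in fact bypasses the recursion entirely. Parametrize the closed curve $\alpha$ by arc length and extend $L$-periodically to $\mathbb R$, where $L=l(\alpha)$; the length axioms in Section~2 make this legitimate. The equality $rs=sr$ in $\Omega$ then says exactly that $l(r)$ is a period of this continuous map $\mathbb R\to M$, in addition to the period $L$. The set of periods of a continuous function is a closed subgroup of $\mathbb R$; since $\alpha$ is nonconstant this subgroup is proper, hence equals $p\,\mathbb Z$ for some $p>0$ with $L=np$. The existence of the period $l(r)\in(0,L)$ forces $n\ge 2$, and then $\alpha=\gamma^n$ with $\gamma=\alpha|_{[0,p]}$. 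If you prefer to keep your inductive framing, this same argument shows $l(r)$ and $l(s)$ are commensurable, which is exactly what guarantees your Euclidean algorithm halts after finitely many steps.
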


Note that a string $X\in S(M)$ is primitive, as defined in the introduction, if it has a cyclically reduced representative that is primitive.

\section{Lie bialgebra structure}

\subsection{Linked pairs} \label{LPs}

In order to define the bracket in $\mathcal S(M)$ we need a way of encoding the intersections of curves in $\Omega$ that are stable under local homotopy. We do this by adapting the notion of {\em linked pairs} from Chas \cite{MR2041630} to our context.

Let $\alpha_1, \alpha_2$ and $\gamma$ in $\Omega$ be classes of geodesic segments  contained in a normal ball such that $\alpha_1(1)=\alpha_2(0)=:y$ and either $\gamma(0)=y$ or $\gamma(1)=y$. Assume that $\alpha=\alpha_1\alpha_2$ is reduced and $\gamma$ only meets $\alpha$ at $y$. Take $\rho>0$ small enough so that $B(y, \rho)$ is a normal ball and  $\alpha_1$, $\alpha_2$ and $\gamma$ are not contained in it, and let  $z_1, z_2, z$ be the intersection points of 
$\alpha_1$, $\alpha_2$ and $\gamma$ with  $\partial B(y, \rho)$ respectively.  Since $M$ is oriented, the orientation of $B(y, \rho)$ induces an orientation of $\partial B(y, \rho)\cong S^1$, which is equivalent to giving a circular order on $\partial B(y, \rho)$. 

We write $sign(\alpha, \gamma)=1$ if  either  $\gamma(0)=y$  and  the order of the sequence $z_2, z, z_1$ coincides with the circular order  of $\partial B(y, \rho)$, or $\gamma(1)=y$ 
and  the order of the sequence $z_2, z_1, z$ coincides with the circular order of $\partial B(y, \rho)$.  Otherwise we write  $sign(\alpha, \gamma)=-1$.
Notice that this sign does not depend on the choice of $\rho$. 

Informally one could say that $sign(\alpha, \gamma)=1$ if $\gamma$ is either outgoing at the ``left'' side of $\alpha$ or incoming at the ``right'' side of $\alpha$, while $sign(\alpha, \gamma)=-1$ if one of the reverse situations happens.

%Let $B(y, \rho)$ be a normal ball of radius $\rho$ about a point $y$ in $M$.  Since $M$ is oriented, the orientation of $B(y, \rho)$ induces an orientation of $\partial B(y, \rho)$. 

%Let $\alpha_1, \alpha_2$ and $\gamma$ in $\Omega$ be classes of geodesic segments  contained in a normal ball so that $\alpha=\alpha_1\alpha_2$ is  reduced and such that  $\gamma$ meets $\alpha$ at $\gamma(0)=\alpha_1(1)$ or it meets $\alpha$  at $\gamma(1)=\alpha_1(1)$. 
%Let $\rho$ be such that $\alpha_1$, $\alpha_2$ and $\gamma$ are not contained in $B(\alpha_1(1), \rho)$. Let $z_1, z_2, z$ be the intersection points of  $\alpha_1$, $\alpha_2$ and $\gamma$ with  $\partial B(\alpha_1(1), \rho)$ respectively.  

Since elements of $\Omega$ are piecewise geodesic curves, the intersections between two elements are either transversal or along an interval. Taking this into account, we discuss the general forms of these intersections and indicate which ones will constitute linked pairs. A factorization of a curve $\alpha\in \Omega$ is a sequence $(\alpha_1,\ldots,
\alpha_n)$ such that $\alpha=\alpha_1\cdots\alpha_n$ where
$\alpha_i\in \Omega$. 

\begin{definition} \label{def:lp}
Consider the following factorizations (of some curves)
\[ A=(a, \eta, b) \]
\[B= (c, \xi,  d) \]

where $a,b,c,d$ are geodesics contained in normal balls. We say that
$(A, B)$ is a {\em linked pair} if any of the following conditions hold

\begin{enumerate}
\item
$\eta=\xi=\mbox{point}$, $d$ meets $ab$ only at $d(0)$ and $c$ meets $ab$ only at $c(1)$, and 

\[sign(ab, d)=sign(ab, c)\] 
\item $\eta=\xi$ (non constant), if we factorize $\eta=\gamma_1 \eta_1\gamma_2$ such that $\gamma_1$ and $\gamma_2$ are contained in normal balls, we have that $d$ meets $\gamma_2 b$ only at $d(0)$ and $c$ meets $a\gamma_1$ only at $c(1)$, and 
\[sign(\gamma_2b, d)= sign(a\gamma_1, c)  \]

\item
$\eta=\xi^{-1}$ (non constant), if we factorize $\eta=\gamma_1 \eta_1\gamma_2$ such that $\gamma_1$ and $\gamma_2$ are contained in normal balls, we have that $c$ meets  $\gamma_2 b$ only at $c(1)$ and $d$ meets 
$a\gamma_1$ only at $d(0)$, and 
\[sign(a\gamma_1, d)= sign(\gamma_2b, c)  \]
\end{enumerate}
\end{definition}

We define the sign of the linked pair as follows: In case (1) we set $sign(A, B)=sign(ab, d)$, in case (2) we set $sign(A, B)=sign(\gamma_2b, d)$ and in case (3) set $sign(A, B)=sign(a\gamma_1, d)$.

If all but the orientation (sign) conditions hold we say that $(A,B)$ is an {\em intersection pair}. Notice that the intersections between two cyclically reduced curves in $\Omega$ can locally be written in the form of intersection pairs. The orientation conditions say that an intersection pair $(A,B)$ is a linked pair exactly when the intersection between the underlying curves is stable under small perturbations. We shall refer to linked pairs of {\em type} (1), (2) or (3) according to which one of the conditions they satisfy in Definition \ref{def:lp}, and we do the same for intersection pairs. 

Next we turn to the intersections of cyclically reduced curves in $\Omega$ in a global sense, i.e. in a way that takes account of multiplicities. 

If $\alpha$ is a closed curve, we say that $P=(\xi,\eta)$ is a {\em cyclic factorization} of $\alpha$ if either $\alpha=\xi_1 \eta \xi_2$ with $\xi = \xi_2 \xi_1$ or $\alpha = \eta_1 \xi \eta_2$ with $\eta = \eta_2 \eta_1$. There is a slight abuse of notation here, as the decompositions $\xi = \xi_2 \xi_1$ or $\eta = \eta_2 \eta_1$ are needed for recovering $\alpha$, and are indeed intended as part of the definition, though we drop them from the notation to make it less cumbersome. If $\alpha$ is cyclically reduced and $\beta$ is a permutation of it, then there is a bijection between  the cyclic factorizations of $\alpha$ and those of $\beta$. Notice though, that in order to talk about cyclic factorizations of a string, we need to choose a curve representative first. The reason for this choice of definition is that we want to keep track of the position of the sub-curves ($\xi$ and $\eta$) with respect to a chosen parameter-basepoint (which is well defined in $\Omega$). This detail will make a difference for strings that are not primitive.   

%We consider the position of $\eta$ inside of $\alpha$ (i.e. the parameter $t$ such that $\eta(0)=\alpha(t)$) as part of the information defining a cyclic factorization. So, we count as different two cyclic factorizations of $\alpha$ that differ in their positions in $\alpha$.

%Note that a change of parametrization induces a natural bijection between the cyclic factorizations of the corresponding curves. Also, if $\beta$ differs from $\alpha$ in a change of base point, there is a bijection between the cyclic factorizations of $\alpha$ and those of $\beta$.
\begin{definition} \label{def:lp cyclic}
Let $\alpha,\beta\in \Omega$ be cyclically reduced closed curves. A {\em linked pair between
$\alpha$ and $\beta$} is a pair $(P, Q)$ of cyclic factorizations
$P=(\alpha_1,\eta)$ and $Q=(\beta_1, \xi)$ of $\alpha$ and $\beta$ respectively, such that if we write
\begin{itemize}
\item $\alpha_1=b \hat
\alpha_1 a$ and $\beta_1= d \hat \beta_1 c$, where $a,b,c,d$ are geodesics contained in normal balls, and
\item $A=(a,\eta, b)$ and $B=(c,\xi, d)$, 
\end{itemize}
then $(A,B)$ is a linked pair.

\end{definition}

Notice that the concatenations $a\eta b$ and $c\xi d$ are well defined, so the above definition makes sense. Moreover, they are sub-curves of some permutations of $\alpha$ and $\beta$ respectively, thus saying that $(A,B)$ is a linked pair means that there is a stable intersection between $\alpha$ and $\beta$, or the strings they represent. Defining $P$ and $Q$ as cyclic factorizations keeps track of the position of the intersection segments relative to the parameter-basepoints of $\alpha$ and $\beta$, so intersections that repeat count as different linked pairs. This amounts to counting multiplicity, just as is usual in differential topology for the intersection between transversal smooth paths. Notice also that taking a permutation of $\alpha$ or $\beta$ induces a natural bijection between the sets of linked pairs. 

We define the {\em length} of a linked pair as $l(P,Q)=l(\eta)=l(\xi)$, i.e. as the length of the intersection segment. The {\em type} of $(P,Q)$ shall be the type of $(A,B)$ in Definition \ref{def:lp cyclic}. % recalling that length is well defined in $\Omega$. 

 %$\alpha_1=b \hat\alpha_1 a$, $\beta_1= d \hat \beta_1 c$ and (A,B)  is a linkedpair, where  $A=(a,\eta, b)$ and $B=(c,\xi, d)$. Set $l(P,Q)=l(\eta)=l(\xi)$, where $l(\eta)$ is the length of $\eta$.

\subsection{Definition of the string bracket}

In this section we define the bracket following closely the
presentation in  \cite{MR2041630}. Since the definition of the
co-bracket involves no new ideas we omit it. Recall that  $S(M)$,
the space of strings, is the set of conjugacy classes of ${\mathcal
L}_x(M)$. Also recall that  $\mathcal S(M)$  is the free abelian group generated by
$S(M)$, in which we shall define the bracket.

For $X\in S(M)$ and an integer $n>0$, let $X^n$ be the conjugacy class of $[\alpha]^n$, where $[\alpha]$ represents $X$. Define also $l(X)=l(\alpha)$ where $\alpha$ is a cyclically reduced representative of $X$, noting that different choices of such representative have the same length. Since $\alpha$ is cyclically reduced, we have that $l(X^n)=nl(X)$.

Although we will not present the definition of the co-bracket, we give the main definition in which it is based, for the sake of completeness. 

\begin{definition} \label{lp1} Let $X$ be a string. We define $LP_1(X)$, the set of linked pairs
 of $X$, as the set of linked pairs of any two representatives of $X$.
\end{definition}

By the discussion at the end of the previous section, the choice of representatives of $X$ in Definition \ref{lp1} does not affect $LP_1(X)$. It is possible to show, similarly as in \cite{MR2041630}, that this definition reflects the stable self-intersections of $X$, at least when $X$ is primitive, in a 2 to 1 correspondence: each stable self-intersection corresponds to two linked pairs of the form $(P,Q)$ and $(Q,P)$. Non-primitive closed curves have stable self-intersections, in the sense of the self-intersections of a transversal perturbation, that do not arise from linked pairs. Since we will not focus on the co-bracket, we shall not prove these assertions. Next we turn to the case of linked pairs between two strings, that will be the key for the construction of the bracket. 

%By the discussion following the definition of linked pairs, $LP_1(X)$ reflects the stable self-intersections of $X$, at least when $X$ is primitive. Non-primitive closed curves have stable self-intersections, in the sense of the self-intersections of a transversal perturbation, that do not arise from linked pairs. Next we turn to the case of the stable intersections of two strings.

\begin{definition} \label{lp2} Let $X$ and $Y$ be strings. Define $LP_2(X,Y)$, the set of linked pairs of $X$ and $Y$, as the set of linked pairs $(P,Q)$   between representatives of  $X^n$ and $Y^m$ for $n,m\geq 1$, where
$l(X^{n-1})\leq l(P, Q)< l(X^{n})$ and $l(Y^{m-1})\leq l(P, Q)< l(Y^{m})$. (With the convention that $l(X^0)=l(Y^0)=0$).
\end{definition}

Again, different choices of representatives for the strings in Definition \ref{lp2} yield sets $LP_2(X,Y)$ that are in natural bijection.
 
\begin{remark} \label{r:lp2} The powers are necessary:
Consider $\alpha$ and $\beta$, closed geodesics starting and ending at the same point $x$ and  meeting transversally at $x$.   Let $X$ be the conjugacy class of $[\alpha]$ and $Y$ the conjugacy class of $[\beta] [\alpha^2]$. 
There is no linked pair between $X$ and $Y$ but there is a linked pair between $X^3$ and $Y$. Note that the core segment of the linked pair is $\alpha^2$.

\end{remark}

On the other hand, it can be shown that $LP_2(X,X)=LP_1(X)$, i.e. the powers are not needed in the case $X=Y$. We shall see later, in Lemma \ref{chas20}, that $LP_2(X,Y)$ captures the notion of stable intersections between $X$ and $Y$. It is not inmediate from Definition \ref{lp2} that $LP_2(X,Y)$ is finite, the proof of this fact will be based in the following result.

%Through the following proposition, given  strings $X$ and $Y$ we can find a finite set of curves giving an alphabet so that representatives of $X$, $Y$ and the curves in their factorizations, can be obtained as words in this alphabet.

\begin{proposition} \label{alphabet} Let $\mathcal{U}=\{\alpha_1,\ldots,\alpha_n \}$ be a finite set of piecewise geodesic curves. There are
factorizations $\alpha_i=a_{i,1}\cdots a_{i,n_i}$ such that whenever ${a_{i,j}\cap a_{k,l} \neq \emptyset}$, either

\begin{enumerate}

\item $a_{i,j}$ and $a_{k,l}$ meet only at one endpoint.

\item $a_{i,j}=a_{k,l}$

\item $a_{i,j}=a_{k,l}^{-1}$

\end{enumerate}

\end{proposition}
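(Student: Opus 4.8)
The plan is to build the common refinement of the curves $\alpha_1,\dots,\alpha_n$ by looking at \emph{all} pairwise overlaps simultaneously, and then cutting each $\alpha_i$ at finitely many points so that every resulting piece is ``indivisible'' with respect to the whole family. First I would reduce to the case that each $\alpha_i$ is already reduced (thin-homotopy does not change the statement, and images of geodesic segments are preserved), and I would fix once and for all a single geodesic segment as a combinatorial unit: since each $\alpha_i$ is a finite concatenation of geodesic segments, there is a common finite set of points through which all the elementary geodesic sub-segments of all the $\alpha_i$ pass, and on a small enough scale two geodesic segments either coincide on a sub-segment, coincide reversed on a sub-segment, meet at isolated points, or are disjoint. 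The key local fact I will use is that two piecewise-geodesic curves meet either transversally (finitely many points) or along a common sub-segment; this is already stated in the excerpt just before Definition~\ref{def:lp}.

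The core construction: on each $\alpha_i$ I single out a finite set $T_i\subset I$ of ``breakpoints'', consisting of (a) the finitely many parameter values where $\alpha_i$ changes geodesic direction, together with (b) for every pair $(i,k)$ and every maximal common sub-segment $c$ of $\alpha_i$ and $\alpha_k$ (possibly $i=k$, possibly appearing reversed), the two endpoints of each occurrence of $c$ inside $\alpha_i$, and (c) for every transversal intersection point between $\alpha_i$ and some $\alpha_k$, the corresponding parameter values. Finiteness of this set is the crux and I address it below. Cutting $\alpha_i$ at $T_i$ produces the factorization $\alpha_i=a_{i,1}\cdots a_{i,n_i}$. Then I claim that if $a_{i,j}\cap a_{k,l}\neq\emptyset$ then one of (1)--(3) holds: if the intersection is transversal it is at a single point which, by construction (c), is an endpoint of both pieces, giving (1); if the intersection contains a sub-segment, that sub-segment extends to a maximal common sub-segment $c$ of $\alpha_i$ and $\alpha_k$, and by construction (b) the occurrence of $c$ has its endpoints among the breakpoints, so $a_{i,j}$ cannot properly contain part of $c$ and part of its complement — hence $a_{i,j}\subset c$ and likewise $a_{k,l}\subset c$, and since both are ``maximal indivisible pieces'' cut at all the relevant breakpoints, after matching lengths (using the cancellation property $ab=cd,\ l(a)=l(c)\Rightarrow a=c$ from the basic facts) one gets $a_{i,j}=a_{k,l}$ or $a_{i,j}=a_{k,l}^{-1}$ depending on relative orientation, which is (2) or (3).

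The main obstacle is establishing \emph{finiteness} of the breakpoint set, specifically that two piecewise-geodesic curves have only finitely many maximal common sub-segments and only finitely many transversal intersection points. For transversal intersections this follows because a single geodesic segment meets another geodesic segment in at most one point (in a normal ball / on a Riemannian manifold two distinct geodesics through a point are determined by direction), so two piecewise-geodesic curves with $p$ and $q$ pieces have at most $pq$ transversal crossings. For common sub-segments one argues similarly piece by piece: a geodesic sub-segment of $\alpha_i$ and a geodesic sub-segment of $\alpha_k$, if they overlap on an interval, lie on the same geodesic, and on each such pair the overlap is a single interval; patching across the finitely many junctions of $\alpha_i$ and $\alpha_k$ can merge some of these but still leaves finitely many maximal ones. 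I would also need to note that ``$a_{i,j}=a_{k,l}$'' at the level of $\Omega$ means equality as reduced (or minimal-form) curves, so that the conclusion is about the curves in $\Omega$ and not merely about images; this is handled by the uniqueness of reduced representatives recalled in the ``Reductions and basic properties'' subsection. The rest is bookkeeping: checking that cutting at a finite set of parameters yields genuine elements of $\Omega$ (each piece is still piecewise geodesic, in fact geodesic if we also cut at the direction-change points, though the statement does not require the pieces to be single geodesic segments) and that no piece is trivial (discard zero-length pieces).
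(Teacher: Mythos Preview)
Your approach is the same as the paper's, which literally says ``Subdivide any factorization of the curves $\alpha_i$ until the desired properties are obtained. This will happen because of the transversality properties of the geodesics.'' You have expanded this one sentence into a concrete construction, and the finiteness arguments you give (at most $pq$ transversal crossings, finitely many maximal overlap intervals piece by piece) are exactly the ``transversality properties'' the paper invokes.

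One step in your verification is underargued. From ``$a_{i,j}\subset c$ and $a_{k,l}\subset c$'' you jump to $a_{i,j}=a_{k,l}^{\pm1}$ by ``matching lengths'', but you have not said why the two pieces have the same \emph{endpoints} inside $c$. What you need is that the breakpoints of $\alpha_i$ lying in $c$ coincide (via the obvious image correspondence) with the breakpoints of $\alpha_k$ lying in $c$. This is true: any breakpoint of $\alpha_i$ interior to $c$ comes from a direction change, a transversal crossing with some $\alpha_m$, or an endpoint of a maximal overlap with some $\alpha_m$; since $\alpha_k$ coincides with $\alpha_i$ along $c$, the same event occurs for $\alpha_k$ at the same image point, producing a breakpoint of $\alpha_k$ there. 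With that observation your argument closes. Alternatively you could phrase it iteratively (as the paper does): if after cutting two pieces still overlap without coinciding, add the offending endpoints as new breakpoints and repeat; termination holds because every refinement is bounded by the edge decomposition of the $1$-complex $\bigcup_i\alpha_i(I)$.
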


\begin{proof} Subdivide any factorization of the curves $\alpha_i$ until the desired properties are obtained. This will happen because
of the transversality properties of the geodesics.

\end{proof}

Given strings $X$ and $Y$, Proposition \ref{alphabet} allows us to find a finite set of curves that works as an alphabet for writing some representatives of $X$ and $Y$, as well as all the core curves of the intersection pairs between (powers of) these representatives. Thus we can write the cyclic factorizations that make up the elements of $LP_2(X,Y)$ as words in this alphabet. 

%so that representatives of $X$, $Y$ and the curves in their factorizations, can be obtained as words in this alphabet.

%With this proposition the proof of the following lemma is a straightforward adaptation of Lemma 2.9 of \cite{MR2041630}

\begin{lemma} For any strings $X$ and $Y$, $LP_2(X,Y)$ is finite.
\end{lemma}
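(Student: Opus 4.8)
The plan is to use Proposition \ref{alphabet} to produce a finite alphabet $\mathcal{A}=\{a_{i,j}\}$ of geodesic segments in which one can express cyclically reduced representatives $\alpha$ of $X$ and $\beta$ of $Y$, all their positive powers, and — crucially — the core segments $\eta=\xi$ (or $\eta=\xi^{-1}$) of every intersection pair that can occur between powers of $\alpha$ and $\beta$. First I would apply Proposition \ref{alphabet} to the set $\mathcal{U}$ consisting of $\alpha$, $\beta$, and the finitely many geodesic segments appearing in their factorizations, so that any intersection between constituent segments of $\alpha^n$ and $\beta^m$ is either a shared endpoint, a coincidence $a_{i,j}=a_{k,l}$, or a coincidence $a_{i,j}=a_{k,l}^{-1}$. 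Then $\alpha^n$ and $\beta^m$ become cyclic words in the finite alphabet $\mathcal{A}$, and by Definition \ref{def:lp} the core segment $\eta$ of any intersection pair is (after subdividing its two boundary geodesics $\gamma_1,\gamma_2$ at the $\mathcal{A}$-subdivision points) a \emph{common subword} of a cyclic power of the $\alpha$-word and of a cyclic power of the $\beta$-word (or of its inverse), since along $\eta$ the two curves run over the same image.

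The key step is the length bound built into Definition \ref{lp2}: if $(P,Q)$ is a linked pair in $LP_2(X,Y)$ arising from $\alpha^n$ and $\beta^m$, then $l(P,Q)<l(X^n)=n\,l(X)$ and $l(P,Q)<l(Y^m)=m\,l(Y)$. So the core segment $\eta$, viewed as a word in $\mathcal{A}$, is a common subword of the cyclic word $\alpha^n$ of length strictly less than one full period, hence it is determined by its starting position within a single copy of $\alpha$ together with its length; and likewise for $\beta^m$. Concretely, I would argue that $\eta$ is a subword of $\alpha^N$ for $N$ bounded in terms of how $\eta$ sits relative to one period — in fact $l(\eta)<l(X^n)$ and $l(\eta)\ge l(X^{n-1})$ pin down $n$ once $\eta$ (equivalently its length) is fixed, and similarly $m$ is pinned down. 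Thus a linked pair in $LP_2(X,Y)$ is determined by the following finite data: the core $\mathcal{A}$-word $\eta$ (finitely many choices, since it is a reduced $\mathcal{A}$-word of bounded length that embeds in a cyclic word of one period of $\alpha$ — bounded because $l(\eta)<l(X^n)$ and the overlap conditions prevent it from wrapping), its position-offsets inside $\alpha$ and inside $\beta$ (finitely many, as $\alpha,\beta$ have finitely many $\mathcal{A}$-letters and a cyclic factorization is determined by a cut point), the boundary geodesics $a,b,c,d$ and the sub-factorization of $\gamma_1,\gamma_2$ (finitely many, chosen from $\mathcal{A}$ and its subdivisions), together with the powers $n,m$ (determined as above). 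Finiteness follows.

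The main obstacle I anticipate is making precise the claim that the core segment cannot be ``too long'': a priori a linked pair between $\alpha^n$ and $\beta^m$ could have core $\eta$ of length comparable to $n\,l(X)$, and one must use $l(P,Q)<l(X^n)$ together with the fact that $\eta$ is a \emph{reduced} subword of a cyclic power — the ``overlap'' observations from the subsection on reductions (that a reduced non-constant $\gamma$ and $\gamma^{-1}$ cannot overlap, and that $\gamma=\gamma^{-1}$ forces $\gamma$ constant) are what prevent $\eta$ from consisting of more than a bounded number of $\mathcal{A}$-letters once its length is under control. A second subtlety is bookkeeping: one must check that the bijections of Definition \ref{lp2} (changing representatives) really do let us fix once and for all one cyclically reduced $\alpha$ and one $\beta$, and that the correspondence between linked pairs and the finite data above is injective — i.e. distinct linked pairs give distinct data — which is where keeping the cyclic factorizations (rather than just the strings) in the definition pays off, especially for non-primitive $X$ or $Y$. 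Once these points are settled, the counting argument gives the finiteness of $LP_2(X,Y)$.
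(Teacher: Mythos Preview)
Your approach is the same as the paper's: reduce to a finite alphabet via Proposition~\ref{alphabet} and then run the combinatorial counting argument that in the paper is outsourced to Chas's Lemma~2.9. The overall architecture---encode a linked pair by finite combinatorial data (starting positions of the core in $\alpha$ and in $\beta$, the type, the boundary letters)---is exactly right.

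There is, however, a genuine soft spot in your execution, precisely at the point you flagged as the main obstacle. Your justification for why $\eta$ has only finitely many possibilities is circular: you write that $\eta$ is ``of bounded length \ldots\ bounded because $l(\eta)<l(X^n)$,'' but $n$ is not bounded a priori, so $l(\eta)<n\,l(X)$ gives no uniform bound. The overlap observations about $\gamma$ versus $\gamma^{-1}$ do not by themselves control the length either. What actually pins down $\eta$---and this is the content of Chas's Lemma~2.9---is \emph{maximality}: by Definition~\ref{def:lp}, the segments $a,b$ leave $\eta$ on each side and $c,d$ leave $\xi$ on each side, so the common segment cannot be extended. Hence once you fix the starting letter of $\eta$ within one period of $\alpha$ and the starting letter of $\xi$ within one period of $\beta$ (finitely many choices each, in the alphabet $\mathcal A$), together with the type ($\eta=\xi$ or $\eta=\xi^{-1}$), the core $\eta$ is \emph{determined} as the maximal common continuation of the two periodic $\mathcal A$-words from those positions; its length, and therefore $n$ and $m$ via the inequalities in Definition~\ref{lp2}, are then forced. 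So $\eta$ should not appear as an independent piece of data in your list---it is a consequence of the two offsets and the type. Once you make this correction, your finite-data encoding is injective and the argument closes.
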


\begin{proof} Using Proposition \ref{alphabet} as indicated above, this becomes a straightforward adaptation of Lemma 2.9 of \cite{MR2041630}.

\end{proof}

\begin{definition} \label{dotproduct} Let $X$ and $Y$ be strings and
$(P,Q)\in LP_2(X,Y)$. Write   $P=(\alpha_1, \eta)$ and $Q=(\beta_1, \xi)$ as in the definition of linked pairs, and let $\alpha$ and $\beta$ be the representatives of $X$ and $Y$ that satisfy the following:

\begin{itemize}
\item If $(P, Q)$ is of type (1) or (2), then \[ \alpha^n=\alpha_1\eta \mbox{ and } \beta^m=\beta_1\xi \]
(where $n,m\geq 1$ are the powers of $X$ and
$Y$ that correspond to $(P,Q)$ in Definition \ref{lp2}).

\item If $(P, Q)$ is of type (3), then 
\[ \alpha^n=\alpha_1\eta  \mbox{ and } \beta^m=\xi\beta_1 \]
(for the same $n,m\geq 1$).
\end{itemize}  

In any of the above cases, define $(X\cdot_{(P,Q)} Y)$ to be the conjugacy class of $[\alpha][\beta]$.

%.  First assume that $(P, Q)$ satisfies condition (1) or (2) in the definition of linked pairs. 
%Let $\alpha$ represent  $X$ and $\beta$ represent $Y$ such that $\alpha\alpha_2=\alpha_1\eta$ and $\beta\beta_2=\beta_1\xi$($\alpha_2$ and $\beta_2$ may be non trivial because we are taking powers of $X$ and $Y$).
%Assume that $(P, Q)$ satisfies condition (3) in the definition of linked pairs. Let $\alpha$ represent  $X$ and $\beta$ represent $Y$ such that $\alpha \alpha_2=\alpha_1\eta$ and $\beta\beta_2=\xi^{-1}\beta_1$.  In any of the above cases  define $(X\cdot_{(P,Q)} Y)$ to be the conjugacy class of $[\alpha][\beta]$.

\end{definition}

We say that $(X\cdot_{(P,Q)} Y)$ is the {\em dot product} of $X$ and
$Y$ at $(P,Q)$. Notice that $\alpha$ and $\beta$ are the representatives of $X$ and $Y$ that we get by choosing parameter-basepoints at the ``ending'' of the linked pair's core curve. They are indeed loops based at the same point, so the concatenation $[\alpha][\beta]$ is well defined. 

\begin{definition} \label{d:bracket} Let $X$ and $Y$ be  strings, we define their bracket as
\[ [X,Y]=\sum_{(P,Q)\in LP_2(X,Y)}\sign(P,Q)(X\cdot_{(P,Q)} Y) \]

Then we extend the definition to $\mathcal{S}(M)$ so that  the bracket is bilinear. 
\end{definition}

\subsection{Linked pairs and differentiable curves}\label{dif}
Our next goal is to show the correspondence between linked pairs and stable intersections, which will lead to the relationship between the bracket in Definition \ref{d:bracket} and the Goldman-Turaev bracket given by equation \eqref{goldman}. This will in turn allow us to prove Theorem \ref{jacobi}, i.e. that Definition \ref{d:bracket} gives a Lie algebra.  

Let $C$ be a compact one dimensional complex on an oriented Riemannian surface $M$ whose edges are geodesic arcs, and consider a basepoint $x\in C$. Let $\tilde C$ be the universal covering of $C$. Then the following lemma is straightforward.

\begin{lemma}\label{lem:ident}
${\mathcal L}_x(C)\cong \pi_1(C, x)$ and ${\mathcal E}_x(C)\cong \tilde C$.

Moreover, these correspondences give an isomorphism of fiber bundles $$({\mathcal E}_x(C),{\mathcal L}_x(C),C,\pi) \cong (\tilde C,\pi_1(C, x),C,\pi)$$ where $\pi_1(C, x)$ acts on $\tilde C$ by deck transformations.

%${\mathcal L}_x(C)\cong \pi_1(C, x)$\\
%${\mathcal E}_x(C)\cong \tilde C$\\

\end{lemma}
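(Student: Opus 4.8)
The plan is to construct the claimed isomorphisms directly and then check that they intertwine the bundle structures and the horizontal lifts. First I would set up the identification $\mathcal L_x(C)\cong\pi_1(C,x)$. Every element of $\mathcal L_x(C)$ is represented by a reduced piecewise-geodesic closed curve based at $x$ whose edges lie along the (finitely many) geodesic arcs of $C$; since $C$ is one-dimensional, such a curve is the same data as an edge-path in the graph $C$ that is reduced in the graph-theoretic sense (no backtracking). The key point is that in a one-dimensional complex, thin homotopy (generated by cancellations $\alpha a a^{-1}\beta\sim\alpha\beta$) coincides with ordinary homotopy rel endpoints: there is no ``room'' for a nontrivial homotopy that is not a sequence of backtrack cancellations, because any homotopy of paths in a graph can be pushed to a combinatorial reduction. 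So the natural surjection $\mathcal L_x(C)\to\pi_1(C,x)$ is also injective, giving the group isomorphism. The same argument applied to based paths with free endpoint gives $\mathcal E_x(C)\cong\widetilde C$: an element of $\mathcal E_x(C)$ is a reduced edge-path from $x$, which is exactly a vertex (or point along an edge) of the universal cover $\widetilde C$ together with the unique reduced path reaching it; conversely $\widetilde C$, as the space of homotopy classes of paths from $x$, maps onto $\mathcal E_x(C)$ and the map is bijective by the same graph-reduction fact.

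Next I would verify the fiber-bundle statement. The projection $\pi:\mathcal E_x(C)\to C$ sends $[\alpha]$ to $\alpha(1)$; under the identification $\mathcal E_x(C)\cong\widetilde C$ this is exactly the covering projection $\widetilde C\to C$, since the endpoint of the lifted reduced path is the point of $\widetilde C$ it represents. The left action of $\mathcal L_x(C)$ on $\mathcal E_x(C)$ by $g\cdot[\alpha]=[g\alpha]$ corresponds, under $\mathcal L_x(C)\cong\pi_1(C,x)$ and $\mathcal E_x(C)\cong\widetilde C$, to the deck-transformation action of $\pi_1(C,x)$ on $\widetilde C$: prepending a loop $g$ to a path $\alpha$ from $x$ is precisely the deck transformation associated to $[g]$ applied to the endpoint of the lift of $\alpha$. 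This gives the isomorphism of principal bundles. Finally, the horizontal lift of a path $\gamma$ in $C$ at $p=[\beta]$ is $s\mapsto[\beta\gamma_s]$; on the $\widetilde C$ side this is just the unique lift of $\gamma$ starting at the point $p\in\widetilde C$, i.e. the standard path-lifting for the covering $\widetilde C\to C$. So the topological connection on $\mathcal E_x(C)$ matches the canonical one on $\widetilde C$, and $(\psi,\phi)$ commutes with horizontal lifts.

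The step I expect to be the main obstacle — and the only one requiring genuine care rather than bookkeeping — is the claim that thin homotopy equals ordinary homotopy for paths in a one-dimensional geodesic complex, equivalently that reduced edge-paths are in bijection with $\pi_1$ (and with $\widetilde C$). For the group-theory version this is the standard fact that $\pi_1$ of a graph is free on a spanning-tree basis and reduced edge-paths are unique normal forms; but here the ``edges'' are geodesic arcs that may themselves be subdivided by the piecewise-geodesic structure, and two arcs of $C$ can overlap along a subarc, so one must first pass to a common subdivision (this is exactly what Proposition \ref{alphabet} provides, applied to the finite set of edges of $C$) so that $C$ becomes an honest graph in which any two edges either coincide, are inverse, or meet only at vertices. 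After that reduction, uniqueness of reduced representatives in $\mathcal E(C)$ — already asserted in general in the subsection on reductions — specializes to the combinatorial statement, and the bijections with $\pi_1(C,x)$ and $\widetilde C$ follow. Everything else is a routine unwinding of definitions, which is why the lemma is called ``straightforward.''
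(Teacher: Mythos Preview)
Your proposal is correct. The paper gives no proof of this lemma at all---it is stated as ``straightforward'' immediately before the statement---so there is nothing to compare against; your argument correctly supplies the omitted details, and your identification of the one nontrivial point (that thin homotopy and ordinary homotopy coincide for piecewise-geodesic paths in a one-dimensional complex, via unique reduced edge-path normal forms) is exactly what makes the lemma work.

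One small notational slip: at the very end you write ``$(\psi,\phi)$ commutes with horizontal lifts,'' borrowing notation from Remark~\ref{bpoint}, but here there is no change of basepoint---you simply mean that the bundle isomorphism you constructed intertwines the two notions of horizontal lift. Also, since $C$ is already assumed to be a one-dimensional complex with geodesic edges, the edges already meet only at vertices, so invoking Proposition~\ref{alphabet} is not strictly necessary (though it does no harm).
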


Let $S(C)$ be the set of strings contained in $C$, and note that the string
bracket of Definition \ref{d:bracket} can be restricted to
$\mathcal S(C)$, the free abelian group on $S(C)$. We will denote this bracket by $[\;,\;]_C$.

For any set $V\subset M$ let  $V_{\varepsilon}$ denote an
$\varepsilon$-neighborhood of $V$. The following lemma is well
known, see \cite{MR621981} for a very general construction.

\begin{lemma}\label{tubular}
If $\varepsilon$ is small enough, there is a retraction
$\chi:C_{\varepsilon}\to C$. It induces an isomorphism $\chi_*:\pi_1(C_{\varepsilon},x)\to\pi_1(C,x)$ for any $x\in C$.
\end{lemma}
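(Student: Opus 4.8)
The idea is to realize $C_{\varepsilon}$ as a regular (tubular) neighborhood of the embedded geodesic graph $C$ and to produce an explicit strong deformation retraction of $C_{\varepsilon}$ onto $C$; the statement about $\pi_1$ then follows formally, since a deformation retraction fixing the basepoint is a homotopy equivalence.

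\emph{Fixing the scale.} Let $V$ be the (finite) vertex set of $C$ and $E$ its set of geodesic edges. Choose $\varepsilon_0>0$ smaller than the injectivity radius of $M$ along a neighborhood of $C$, smaller than one third of the distance between any two non-incident closed cells of $C$, and small enough that for every $v\in V$ the ball $B(v,3\varepsilon_0)$ is a normal ball whose intersection with $C$ consists exactly of the initial subarcs of the edges incident to $v$; this last condition holds because the edges are geodesics, so they leave $v$ in distinct directions and cannot re-enter a sufficiently small ball. For any $\varepsilon\le\varepsilon_0$ this guarantees that $C_{\varepsilon}$ is an embedded surface with boundary admitting two local models: over the interior of an edge $e$ it is a geodesic-normal strip around $e$, and over a vertex $v$ it is the $\varepsilon$-neighborhood, inside the disk $B(v,3\varepsilon)$, of the ``star'' $C\cap B(v,3\varepsilon)$ (a cone on finitely many points).

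\emph{Building the retraction.} Over each open edge $e$, away from the $2\varepsilon$-balls around its endpoints, the nearest-point projection onto $e$ is well defined and smooth, and sliding along the normal geodesics gives a deformation retraction of the strip onto $e$ that fixes $e$; cut this homotopy off with a bump function in the arclength parameter so that it is the identity inside the $2\varepsilon$-balls. Over each vertex ball $B(v,3\varepsilon)$, writing things in normal coordinates centered at $v$, the $\varepsilon$-neighborhood of the star admits a standard ``sweep onto the nearest spoke'' deformation retraction onto $C\cap B(v,3\varepsilon)$, which one arranges (interpolating the two homotopies with a bump function in the radial variable on the annulus $2\varepsilon<d(\cdot,v)<3\varepsilon$) to agree with the edge homotopy on the overlap. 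Concatenating these homotopies (first the cut-off edge homotopies, then the vertex ones) yields a strong deformation retraction $H\colon C_{\varepsilon}\times I\to C_{\varepsilon}$ with $H_0=\mathrm{id}$, $H_1=\chi\colon C_{\varepsilon}\to C$, and $H_t|_C=\mathrm{id}$ for all $t$. This is precisely the local-model computation carried out in the general setting of \cite{MR621981}, to which we refer for the routine verifications.

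\emph{Conclusion and main difficulty.} Let $\iota\colon C\hookrightarrow C_{\varepsilon}$. Since $\chi$ is a retraction, $\chi\circ\iota=\mathrm{id}_C$, so $\chi_*\circ\iota_*=\mathrm{id}$ and $\chi_*$ is surjective; since $H$ is a homotopy from $\iota\circ\chi$ to $\mathrm{id}_{C_{\varepsilon}}$ fixing $x\in C$, also $\iota_*\circ\chi_*=\mathrm{id}$, so $\chi_*\colon\pi_1(C_{\varepsilon},x)\to\pi_1(C,x)$ is an isomorphism with inverse $\iota_*$. The one delicate point is the compatible gluing near the vertices: one must ensure the locally defined retractions over the edge strips and over the vertex stars actually agree on overlaps and assemble into a single globally defined continuous map that lands in $C$. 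The geodesic nature of the edges keeps the local models simple enough that the radial interpolation around each vertex works, but this is the step that requires care, and it is exactly what the cited regular-neighborhood construction packages.
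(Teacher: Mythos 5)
Your argument is correct, but note that the paper does not actually prove this lemma: it simply declares it well known and points to \cite{MR621981} for a very general regular-neighborhood construction, so your proposal supplies precisely the content the paper delegates to the literature. Your route is the standard one — fix $\varepsilon$ below the injectivity radius and the separation of non-incident closed cells, so that $C_{\varepsilon}$ decomposes into normal strips over edge interiors and $\varepsilon$-neighborhoods of vertex stars inside normal balls, build a strong deformation retraction locally (normal-geodesic sliding over edges, sweeping onto the spokes near vertices), and glue with bump functions; the $\pi_1$ statement then follows formally from $\chi\circ\iota=\mathrm{id}_C$ and the basepoint-preserving homotopy $\iota\circ\chi\simeq\mathrm{id}_{C_{\varepsilon}}$. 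The only step that is genuinely delicate is the one you flag yourself: checking that the edge-strip and vertex-star retractions agree on overlaps and assemble continuously, and that the metric $\varepsilon$-neighborhood really coincides with the union of these two local models for uniformly small $\varepsilon$ (compactness of $C$ and the fact that distinct geodesic edges leave a vertex in distinct directions are what make this work). Since you interpolate on the annular region and, for the remaining routine verifications, invoke the same reference \cite{MR621981} that the paper itself relies on, there is no gap; your write-up buys an explicit $\chi$ and an explicit strong deformation retraction where the paper only asserts existence.
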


Let ${\mathcal S}_1(M)$ be the free abelian group generated by the
set of conjugacy classes of $\pi_1(M)$, where the Goldman-Turaev bracket is defined. By Lemma \ref{lem:ident} we see that $\chi$ induces an isomorphism of abelian groups   $\chi_*:{\mathcal S}_1(C_{\varepsilon})\to {\mathcal S}(C) $. We shall prove that $\chi_*$ sends the Goldman-Turaev bracket of the surface $C_{\varepsilon}$ to $[\;,\;]_C$.

Proposition 3.11 in \cite{MR2041630} can be rephrased as

\begin{lemma}\label{chas0} Let $[\alpha]$ and $[\beta]$ be representatives of strings in $S(C)$. Then there are differentiable curves $\gamma$ and $\delta$ in $C_{\varepsilon}$  such that 

\begin{itemize}
\item $\gamma$ and $\delta$ are $\varepsilon$-perturbations of $\alpha$ and $\beta$ respectively, 
\item $\alpha$ and $\beta$ are the reduced forms of $\chi\circ \gamma$ and $\chi\circ \delta$ respectively, and
\item $\gamma$ and $\delta$ intersect transversally, in at most double points, and determine no bigons.
\end{itemize}

Moreover, $\gamma$ and $\delta$ have minimal intersection among the curves in their homotopy classes.
\end{lemma}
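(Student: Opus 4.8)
The plan is to build the two curves $\gamma$ and $\delta$ by a direct geometric construction inside $C_\varepsilon$, taking advantage of the fact that $C$ is a one-dimensional complex (a graph) with geodesic edges. The key structural observation is that all the potential intersections of $\alpha$ and $\beta$ inside $C$ occur either along shared edges (or shared edge-segments) or at vertices of $C$; there are no transversal crossings in $C$ itself because $C$ is one-dimensional. So the job of the perturbation is to resolve each such ``coincidence'' into a controlled collection of transversal double points in the thickened surface $C_\varepsilon$.

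First I would fix a small tubular neighborhood $C_\varepsilon$ together with the retraction $\chi$ from Lemma \ref{tubular}, and set up local coordinates near each edge and each vertex: near the interior of an edge $e$, $C_\varepsilon$ looks like $e\times(-\varepsilon,\varepsilon)$; near a vertex $v$ of valence $k$, it looks like a disk with $k$ ``prongs'' glued in cyclic order determined by the orientation of $M$. Second, I would use Proposition \ref{alphabet} to choose a common edge-subdivision (an alphabet) so that $\alpha$ and $\beta$, as cyclically reduced words, traverse a finite list of oriented edges $a_{i,j}$, and any two of these edges either coincide, are inverse to each other, or meet only at a common endpoint. Third, I would push each strand of $\alpha$ and of $\beta$ off the core $C$ by a transverse displacement that is monotone/generic along each edge: strands traversing the same oriented edge get displaced to distinct parallel copies, with the cyclic ordering of the copies at the two ends of the edge chosen consistently, and strands traversing an edge in opposite directions are displaced so that they cross exactly once (or not at all, as needed to kill bigons). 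Fourth, at each vertex I would route the strands through the prong-disk following the circular order prescribed by the orientation, exactly mirroring the $\sign(\cdot,\cdot)$ conventions from Section \ref{LPs}, so that a linked pair of type (1) produces a single transversal crossing near the vertex, a linked pair of type (2) produces a crossing at the ``end'' of the shared segment, and a linked pair of type (3) likewise; intersection pairs that fail the orientation condition get routed so they produce a bigon with $C$ that can be removed, hence no crossing. This is where one must check: (a) the result is differentiable — smooth the corners at vertices, which is harmless; (b) $\chi\circ\gamma$ and $\chi\circ\delta$ reduce to $\alpha$ and $\beta$ — automatic since the displacement is a homotopy supported in the fibers of $\chi$ and the backtrack-cancellations are exactly the reductions; (c) $\gamma,\delta$ are $\varepsilon$-close to $\alpha,\beta$ — by construction; (d) intersections are transversal double points with no bigons — arrange genericity of the displacement and kill bigons one at a time by the standard innermost-bigon argument; and (e) minimality of intersection number in the homotopy classes — this follows from the bigon criterion of Hass--Scott (and the annulus criterion for the self-intersection/parallel cases), once no bigons are present.

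The main obstacle, I expect, is the bookkeeping at the vertices in the non-primitive / power case: when $\alpha$ or $\beta$ traverses a vertex or an edge more than once, the various parallel copies must be threaded in a globally consistent cyclic order so that (i) no two copies of the \emph{same} curve cross each other spuriously, (ii) the crossings between the $\alpha$-copies and the $\beta$-copies are in bijection with $LP_2(X,Y)$ with the correct signs, and (iii) no bigons are created between an $\alpha$-copy and a $\beta$-copy, nor between a curve and $C$. Making the displacement at the two endpoints of each edge agree is a compatibility condition that propagates around $C$; showing it can always be satisfied is essentially the content of the cited Proposition 3.11 of \cite{MR2041630}, so the honest statement is that this step is a ``straightforward adaptation'' of Chas's argument to the piecewise-geodesic/thin-homotopy setting, with the geodesic transversality from Proposition \ref{alphabet} replacing the combinatorial genericity she uses. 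The remaining verifications (differentiability after corner-smoothing, the reduction property, $\varepsilon$-closeness, and minimality via the bigon/annulus criteria) are routine once the vertex-routing is fixed, so I would state and prove the vertex-routing lemma carefully and treat the rest briskly.
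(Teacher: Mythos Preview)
The paper does not actually prove this lemma: it introduces it with the sentence ``Proposition 3.11 in \cite{MR2041630} can be rephrased as'' and gives no further argument. So there is nothing in the paper to compare your proposal against beyond that citation.

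Your sketch is a reasonable unpacking of what Chas's construction does in this setting, and you yourself say as much. Two small comments. First, some of what you describe --- the routing at vertices so that each linked pair of a given type produces exactly one crossing with the correct sign --- is really the content of the \emph{next} lemma (Lemma~\ref{chas20}), not of this one; Lemma~\ref{chas0} only asserts the existence of smooth $\gamma,\delta$ with the listed properties, and the bijection with $LP_2(X,Y)$ is established afterwards. Second, for the ``minimal intersection'' clause you invoke the bigon criterion; that is fine, but note that in the paper's logic this minimality is not used anywhere --- the subsequent argument for Lemma~\ref{cor:inc} works directly from the no-bigon property via the bijection in Lemma~\ref{chas20}. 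So if you were writing this out in full, you could safely treat the minimality as a remark rather than as part of the load-bearing statement.
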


We remark that in Lemma \ref{chas0} the curves $\chi\circ \gamma$ and $\chi\circ \delta$ need not be reduced, but the curves removed in their reduction have length less than $\varepsilon$, and each one is contained in some geodesic edge of the complex $C$, assuming $\varepsilon$ is small enough. Now we can relate the linked pairs of two strings in $S(C)$ with the intersections that are stable under perturbation in $C_{\varepsilon}$ .

\begin{lemma}\label{chas20} Let $[\alpha]$ and $[\beta]$ be representatives of strings $X$ and $Y$ in $S(C)$, and let $\gamma$ and $\delta$ be curves given by Lemma \ref{chas0}.

Then for each intersection point $p$  of $\gamma$ and $\delta$ there are $n,m\geq 1$, curves  $a\subset\gamma^n$ and $b\subset\delta^m$ meeting at $p$, and a linked pair $(P,Q)$ between $\alpha^n$ and $\beta^m$ that satisfy the following:

If $P=(\alpha_1,\xi)$ and $Q=(\beta_1,\eta)$, then the reduced forms of $\chi\circ a$ and $\chi\circ b$ can be written as $a_1\xi a_2$ and $b_1\eta b_2$ respectively, where $a_1,a_2,b_1,b_2$ are geodesic segments.

%\begin{itemize}
%\item if $P=(\alpha_1,\xi)$, then the reduced form of  $\chi\circ a$ can be written as $a_1\xi a_2$ where $a_1,a_2$ are geodesic segments, and  
%\item if $Q=(\beta_1,\eta)$, then the reduced form of  $\chi\circ b$ can be written as $b_1\eta b_2$ where $b_1,b_2$ are geodesic segments.
%\end{itemize}

Moreover, this correspondence is a bijection between the intersection points of $\gamma$ and $\delta$, and $LP_2(X,Y)$.
\end{lemma}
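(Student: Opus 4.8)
The plan is to establish the bijection in Lemma \ref{chas20} by analyzing intersections locally and then accounting for multiplicity globally. First I would set up the local picture: at each transversal intersection point $p$ of $\gamma$ and $\delta$, the curves $\gamma$ and $\delta$ are $\varepsilon$-perturbations of the piecewise geodesic curves $\alpha$ and $\beta$, so near $p$ both $\gamma$ and $\delta$ shadow geodesic arcs of $C$ (or, after applying $\chi$, run along edges of $C$). Pushing down via $\chi$, the intersection at $p$ becomes one of the three local configurations catalogued in Definition \ref{def:lp}: either the two curves cross at a vertex of $C$ (type (1)), or they share a common sub-arc along an edge and then split off on the correct sides (types (2) and (3)), the distinction between (2) and (3) being whether they traverse the shared segment in the same or opposite direction. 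The crucial point is that because $\gamma,\delta$ determine no bigons and realize minimal intersection, the perturbation cannot be undone, so the intersection pair obtained is in fact a linked pair: the sign condition in Definition \ref{def:lp} is exactly the statement that the transversal crossing at $p$ survives all perturbations. Here I would use the informal ``left/right, incoming/outgoing'' description of $\sign(\alpha,\gamma)$ to match the combinatorial sign with the geometric persistence of the crossing.

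Next I would handle the passage from a single arc to powers. Fix the point $p$ and follow $\gamma$ forward and backward from $p$ until the lifted trajectory in $C_\varepsilon$ first returns to a configuration projecting onto a basepoint-aligned cyclic factorization of $\alpha$; this determines the minimal $n\geq 1$ such that the relevant sub-arc $a$ of $\gamma^n$ contains $p$ in its interior and $\chi\circ a$ has reduced form $a_1\xi a_2$ with $\xi$ the core geodesic segment of the intersection. Symmetrically one gets $m\geq 1$ and $b\subset\delta^m$ with $\chi\circ b$ reducing to $b_1\eta b_2$. The length bookkeeping $l(X^{n-1})\le l(P,Q)<l(X^n)$ in Definition \ref{lp2} is precisely the statement that $n$ is this minimal return index: the core segment $\xi$ has length at least $l(X^{n-1})$ (it is not a sub-curve of any lower power up to permutation) but strictly less than $l(X^n)$. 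So the linked pair $(P,Q)$ built from the cyclic factorizations $P=(\alpha_1,\xi)$, $Q=(\beta_1,\eta)$ lies in $LP_2(X,Y)$, giving a well-defined map from intersection points to $LP_2(X,Y)$.

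For injectivity and surjectivity I would argue as follows. Injectivity: two distinct intersection points $p\ne p'$ of $\gamma$ and $\delta$ give distinct linked pairs because the cyclic factorizations record the position of the core segment relative to the parameter-basepoints of $\alpha$ and $\beta$ (this is the ``multiplicity-counting'' remark following Definition \ref{def:lp cyclic}), and since $\gamma$ has minimal self-intersection and determines no bigons with $\delta$, different crossings cannot project to the same positioned configuration in $C$. Surjectivity: given a linked pair $(P,Q)\in LP_2(X,Y)$ with core segment $\xi$, the segment $\xi$ sits inside edges of $C$ (refining the edge subdivision if needed, as in Proposition \ref{alphabet}), and the linked-pair sign condition guarantees that when we replace $\alpha^n,\beta^m$ by the nearby curves $\gamma^n,\delta^m$ supplied by Lemma \ref{chas0}, the two strands are forced to cross transversally somewhere along the image of $\xi$ — producing an honest intersection point $p$ of $\gamma$ and $\delta$ mapping back to $(P,Q)$; minimality of intersection ensures there is exactly one such $p$ and that no spurious intersections are introduced.

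The main obstacle I expect is the multiplicity/powers argument — showing that the minimal return index $n$ from the local analysis coincides with the power $n$ singled out by the length inequalities in Definition \ref{lp2}, and that this matching is consistent on both sides simultaneously. One must be careful that the core segment $\xi$ might wind around $\alpha$ several times, that the perturbation $\gamma$ of $\alpha^n$ may introduce short extra arcs of length $<\varepsilon$ under $\chi$ (the caveat noted after Lemma \ref{chas0}) which must be absorbed into the geodesic pieces $a_1,a_2$ without altering the reduced form, and that for non-primitive $X$ or $Y$ the positioning of $\xi$ relative to the basepoint genuinely distinguishes several linked pairs that would otherwise be identified. Handling this bookkeeping cleanly — rather than the essentially local and already-understood transversality analysis — is where the real work lies, and I would likely isolate it as a separate combinatorial sub-lemma about where a minimal-length core segment can sit inside a power of a cyclically reduced curve.
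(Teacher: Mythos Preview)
Your intuitions are correct, but the proposal has a genuine gap in the step where you verify that the intersection pair obtained from a crossing point $p$ is a \emph{linked} pair, not merely an intersection pair. You write that ``the sign condition in Definition \ref{def:lp} is exactly the statement that the transversal crossing at $p$ survives all perturbations,'' but this equivalence is precisely what needs to be proved here, and your local ``left/right'' description does not supply the argument. The difficulty is that the core segment $\xi$ may be long, and the sign condition compares orientations at its two \emph{ends}; nothing in a purely local analysis near $p$ tells you those two signs agree.

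The paper's proof resolves this by working systematically in the universal cover $\tilde C_\varepsilon$. For each $p$ one chooses a lift $\hat p$ and the unique infinite lifts $\hat\gamma,\hat\delta$ meeting only at $\hat p$ (this is where the no-bigons hypothesis is used). Their retractions $\hat\alpha,\hat\beta$ to the tree $\tilde C$ then intersect in a single compact arc, and one forms the closed disk $V=\bigcup\{V(c):c\cap\hat\alpha\cap\hat\beta\neq\emptyset\}$ out of the preimages $V(c)=\tilde\chi^{-1}(c)$ of the edges. The key point is that $\hat\gamma\cap V$ and $\hat\delta\cap V$ are arcs crossing once inside $V$, hence their entry/exit segments on $\partial V\cong S^1$ alternate; translating this cyclic separation back to $C$ is exactly the sign condition in Definition \ref{def:lp}. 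This disk argument simultaneously identifies the core segment $\xi$ (as the projection of $\hat\alpha\cap\hat\beta$), gives the minimal powers $n,m$ (by compactness of $\hat a,\hat b$ inside lifts of $\gamma^n,\delta^m$), and makes the bijection transparent: distinct intersection points give non-equivalent pairs of infinite lifts in $\tilde C_\varepsilon$, hence distinct cyclic factorizations. Your downstairs approach could perhaps be pushed through, but without the universal cover you have no clean mechanism for any of these three steps, and in particular your injectivity and linked-pair arguments are currently assertions rather than proofs.
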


\begin{proof}
This is done with the  techniques of \cite{MR2041630}. We
consider $\pi:\tilde{C}_{\varepsilon}\to C_{\varepsilon}$, the universal cover of
$C_{\varepsilon}$. Then $\tilde{C}$ is embedded in
$\tilde{C}_{\varepsilon}$ as a tree, made of piecewise geodesic curves,
since $C$ is piecewise geodesic. Consider also
$\tilde{\chi}:\tilde{C}_{\varepsilon}\to\tilde{C}$ the lift of $\chi$, and
for each geodesic arc $c$ in the decomposition of $\tilde{C}$ let
$V(c)=\tilde{\chi}^{-1}(c)$. Then the sets $V(c)$ are homeomorphic to closed disks, they cover $\tilde{C}_{\varepsilon}$, and their interiors are disjoint. 

Now, for each $p$ in the intersection between $\gamma$ and $\delta$, we pick $\hat{p}\in\tilde{C}_{\varepsilon}$ projecting to $p$, and consider $\hat{\gamma}$ and $\hat{\delta}$ the infinite lifts of $\gamma$ and $\delta$ that meet only at $\hat{p}$ (recalling that $\gamma$ and $\delta$ have no bigons). Let $\hat \alpha$ and $\hat \beta$ be the respective reductions of $\tilde \chi \circ \hat \gamma$ and $\tilde \chi \circ \hat \delta$, which are infinite lifts of $\alpha$ and $\beta$. The set $\hat \alpha(I) \cap \hat \beta(I)$ is compact, since $\hat{\gamma}$ and $\hat{\delta}$ meet only once and are lifts of closed curves, and it is an arc, since $\hat \alpha$ and $\hat \beta$ are reduced. We shall write $\hat \alpha(I) \cap \hat \beta(I)$ as a curve in two ways, with possibly different orientations: we call $\hat\xi$ and $\hat\eta$ to the curves spanning $\hat \alpha(I) \cap \hat \beta(I)$ with the orientations given by $\hat \alpha$ and $\hat \beta$ respectively. It may be the case that $\hat \alpha(I) \cap \hat \beta(I)$ is just a point, then $\hat \xi$ and $\hat \eta$ will be constant (this will result in a linked pair of type (1) ).

% + Let $\hat\xi$ and $\hat\eta$ be the curves spanning $\hat \alpha \cap \hat \beta$ with the orientations given by $\hat \alpha$ and $\hat \beta$ respectively. 

% - and let $\hat a\subset\hat\gamma$ and $\hat b \subset \hat \delta$ be the arcs that verify $[\hat \xi]=[\hat\chi\circ\hat a]$ and $[\hat \eta]=[\hat\chi\circ\hat b]$. 

% + Since $\hat a$ is compact, it is contained in some lift of $\gamma^n$ inside $\hat \gamma$, for some power $n\geq 1$. We pick the minimum $n$ so that this inclusion is strict. The same argument yields $m$, minimum so that $\hat b$ is strictly contained in a lift of $\delta^m$ inside $\hat \delta$.

% - Thus we may write $\omega=\hat \alpha \cap \hat \beta$ as a piecewise geodesic arc $\omega=a_1\cdots a_k$ in $\tilde{C}$, choosing an orientation for $\omega$. 

Let
\[ V = \cup \{V(c): c\cap \hat \alpha(I) \cap \hat \beta(I) \neq \emptyset \} \]
Then $V$ is, topologically, a closed disk, and we have $\hat \alpha \cap V = c_1 \hat\xi c_2$ and $\hat \beta \cap V = d_1 \hat\eta d_2$, where $c_1,c_2,d_1,d_2$ are pairwise different geodesic segments.

For each geodesic segment $c$ that meets $\hat \alpha(I) \cap \hat \beta(I)$ but is not contained in it, we define the set $B(c)=\partial V(c)\cap\partial V$. Note that $B(c)$ is a segment in $\partial V \cong S^1$, and that $c$ has an endpoint in $B(c)$ and the other in $\hat \alpha(I) \cap \hat \beta(I)$. The segments $B(c)$ just defined are pairwise disjoint, and their union is the relative boundary of $V$ in $\tilde{C}_{\varepsilon}$. Taking $\varepsilon$ small enough, we may assume that the $\varepsilon$-neighborhood of $\hat \alpha$ only meets the relative boundary of $V$ at the arcs $B(c_1)$ and $B(c_2)$. Thus $\hat \gamma \cap V$ is an arc that enters $V$ through $B(c_1)$ and exits through $B(c_2)$, meeting no other segment of the relative boundary of $V$. Similarly we get that $\hat \delta\cap V$ is an arc that traverses $V$ from $B(d_1)$ to $B(d_2)$. 

Let $\hat a = \hat \gamma \cap V$ and $\hat b = \hat \delta \cap V$. They must intersect at $\hat p$, in particular $\hat p \in V$, since the complementary arcs of $\hat \gamma$ and $\hat \delta$ are in different components of $\tilde{C}_{\varepsilon}-V$. Note that $\tilde\chi\circ\hat a$ and $\tilde\chi\circ\hat b$ can be reduced, respectively, to $\hat \alpha \cap V = c_1 \hat\xi c_2$ and $\hat \beta \cap V = d_1 \hat\eta d_2$. We define the curves $a,b,\xi,\eta$  in the statement  as the respective projections of $\hat a, \hat b, \hat \xi, \hat \eta$.

By compactness, there are $n,m\geq 1$ such that $\hat a$ and $\hat b$ are contained in lifts of $\gamma^n$ and $\delta^m$ inside $\hat \gamma$ and $\hat \delta$ respectively. We choose $n, m$ minimal for these inclusions to be strict. Thus $\xi$ and $\eta$ induce cyclical factorizations of $\alpha^n$ and $\beta^m$ respectively, namely $P$ and $Q$. It only remains to show that \[ (a_1,\xi,a_2) \mbox{ and } (b_1,\eta,b_2) \]
is a linked pair, where $a_1,a_2,b_1,b_2$ are the respective projections of $c_1,c_2,d_1,d_2$. This is because $\hat\gamma$ and $\hat \delta$ meet transversally, and only once in $V$, thus $B(c_1)\cup B(c_2)$ separates $B(d_1)$ from $B(d_2)$ in $\partial V \cong S^1$. Using the orientation of $V$ induced by lifting the one of $C_{\varepsilon}\subset M$, the last fact allows us to verify the sign conditions in the definition of linked pair. It also yields that $sign(P,Q)=\epsilon(p,\gamma,\delta)$, which will be useful later. 

Note that, by the minimality of $n$ and $m$, we have $(P,Q)\in LP_2(X,Y)$. The reciprocal construction is now straightforward, and so is checking bijectivity.

% - and we can write $\hat \alpha \cap V = b_1 \omega b_2$ and $\hat \beta \cap V = c_1 \omega c_2$, where $b_1,b_2,c_1,c_2$ are pairwise different geodesic segments, and the equalities are meant in the sense of sets, without regard for the orientation.
 
 %choosing an arbitrary orientation for this arc.

%Now, for each $p$ in the intersection between $\gamma$ and $\delta$, there is $\hat{p}\in\tilde{C}_{\epsilon}$ projecting to $p$, and lifts $\hat{\gamma}$, $\hat{\delta}$ of $\gamma$ and $\delta$ meeting only at $\hat{p}$. Put $\hat \alpha =\tilde \chi \circ \hat \gamma$ and $\hat \beta = \tilde \chi \circ \hat \delta$. Since $\alpha$ and $\beta$ are reduced, the set $Y=\hat \alpha \cap \hat \beta$ is an arc $a_1 \cdots a_k$ in $\tilde C$. Let \[ V = \cup \{V(a): a\cap Y \neq \emptyset \} \] Then $V$ is a disk, with some boundary arcs. Also, we have $\hat \alpha \cap V = b_1 Y b_2$ and $\hat \beta \cap V = c_1 Y c_2$. Let $B_1 = \partial V(b_1)\cap \partial V$, the arc of $\partial V$ meeting $b_1$, and define $B_2$, $C_1$ and $C_2$ analogously. Note that $\hat \gamma$ enters $V$ through $B_1$ and leaves it through $B_2$, and also $\hat \delta$ goes from $C_1$ to $C_2$. Since $\hat p$ is their only point of intersection, $B_1$ and $B_2$ separate $C_1$ from $C_2$ in the boundary of $V$ (the ``complete'' boundary as a disk).

%Then $X=\pi(a_1)\cdots \pi(a_k)$ is the core segment of a linked pair as in section \ref{LPs} (if $Y$ is just a point, then $X$ is empty and the linked pair is of type 1). Otherwise, it would contradict lemma \ref{chas0}. The reciprocal part is similar.

\end{proof}

\begin{figure}[htbp]
\input{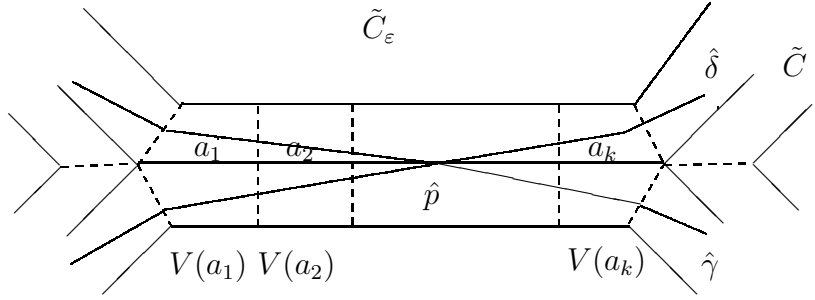}
\caption{Proof of lemma \ref{chas20}. We depict the simple case when $\hat \alpha(I)\cap\hat\beta(I)$ does not contain branching vertices of $\tilde C$ in its interior. For this figure, $a_1,\ldots,a_k$ denote the geodesic segments in $\hat \alpha(I)\cap\hat\beta(I)$.}
\end{figure}

%We  write $[\;,\;]_{\pi_1(C_{\epsilon})}$ for the Goldman-Turaev bracket in$C_{\epsilon}$, i.e. the bracket defined in \eqref{goldman}.

\begin{lemma}\label{cor:inc}
$({\mathcal S}(C), [\; ,\; ]_C)$ is a Lie algebra.
\end{lemma}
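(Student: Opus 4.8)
The plan is to deduce the lemma from the theorem of Goldman \cite{MR846929} and Turaev \cite{MR1142906} that the Goldman-Turaev bracket makes $\mathcal{S}_1(C_\varepsilon)$ a Lie algebra, by transporting that structure along the group isomorphism $\chi_*\colon\mathcal{S}_1(C_\varepsilon)\to\mathcal{S}(C)$ coming from Lemmas \ref{lem:ident} and \ref{tubular}. Since $\chi_*$ is bijective and additive, it suffices to prove that it carries the Goldman-Turaev bracket of $C_\varepsilon$ to $[\;,\;]_C$; the Jacobi identity, antisymmetry and bilinearity of $[\;,\;]_C$ then follow at once from the corresponding properties upstairs.

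To establish this intertwining I would fix strings $X,Y\in S(C)$ with cyclically reduced representatives $\alpha,\beta$, and apply Lemma \ref{chas0} to obtain transversal, bigon-free curves $\gamma,\delta$ in $C_\varepsilon$ whose free homotopy classes $\chi_*$ sends to $X$ and $Y$. Since $\gamma$ and $\delta$ meet transversally in double points, the formula \eqref{goldman} applies and the Goldman-Turaev bracket of their classes is $\sum_{p\in\gamma\cap\delta}\epsilon(p;\gamma,\delta)\,\{\gamma_p\delta_p\}$. Lemma \ref{chas20} supplies a bijection $p\mapsto(P,Q)$ between $\gamma\cap\delta$ and $LP_2(X,Y)$ with $\epsilon(p;\gamma,\delta)=\sign(P,Q)$, so the two sums have the same index set and matching signs, and the one remaining point is to check that $\chi_*\{\gamma_p\delta_p\}=(X\cdot_{(P,Q)}Y)$, the dot product of Definition \ref{dotproduct}.

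I expect this last identification to be the main obstacle, and it is essentially careful basepoint bookkeeping. Reparametrizing $\gamma$ and $\delta$ to start and end at $p$, one uses the final assertion of Lemma \ref{chas20} — that $\chi\circ\gamma_p$ and $\chi\circ\delta_p$ reduce to curves of the forms $a_1\xi a_2$ and $b_1\eta b_2$ with $a_i,b_i$ geodesic segments — together with the fact that, by the construction in the proof of Lemma \ref{chas20}, $p$ lies at the terminal endpoint of the core arc; this forces the reduced forms of $\chi\circ\gamma_p$ and $\chi\circ\delta_p$ to be exactly the representatives of $X$ and $Y$ singled out in Definition \ref{dotproduct}. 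In the type (3) case this is precisely where $\chi\circ\delta_p$ reduces to $\xi\beta_1$ rather than $\beta_1\xi$, matching the reversed order of concatenation prescribed there. Since $\chi_*$ is a homomorphism on each fiber group $\mathcal{L}_p(C)$ it respects concatenation at $p$, so $\chi_*\{\gamma_p\delta_p\}$ is the conjugacy class of $[\alpha][\beta]$, i.e. $(X\cdot_{(P,Q)}Y)$.

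Summing over $p$ then gives $\chi_*\big([\,\cdot\,,\cdot\,]_{\pi_1(C_\varepsilon)}\big)=[X,Y]_C$ on generators, and bilinearity extends it to all of $\mathcal{S}(C)$, which together with the first paragraph finishes the proof. One minor point to keep an eye on is the diagonal case $X=Y$ (and non-primitive strings), but there the identity $LP_2(X,X)=LP_1(X)$ recorded after Definition \ref{lp2} shows that nothing new arises, and the Goldman-Turaev side is in any case well defined independently of these combinatorial subtleties.
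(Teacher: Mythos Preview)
Your proposal is correct and follows essentially the same route as the paper: both argue that $\chi_*$ intertwines the Goldman--Turaev bracket on $\mathcal{S}_1(C_\varepsilon)$ with $[\;,\;]_C$, using Lemma~\ref{chas0} to produce transversal perturbations $\gamma,\delta$, the bijection and sign equality from (the proof of) Lemma~\ref{chas20}, and Definition~\ref{dotproduct} to match the terms. One small inaccuracy: the intersection point $p$ need not project under $\chi$ to the terminal endpoint of the core arc (in the proof of Lemma~\ref{chas20}, $\hat p$ is only shown to lie in the disk $V$), so the reduced forms of $\chi\circ\gamma_p$ and $\chi\circ\delta_p$ need not be literally the basepointed representatives of Definition~\ref{dotproduct}; but since you are comparing conjugacy classes, conjugating by the segment of the core from $\chi(p)$ to its endpoint fixes this, and the paper's proof glosses over the same detail.
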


\begin{proof}
Let $X,Y\in S(C)$ and let $\alpha$, $\beta$, and $\gamma$, $\delta$ be as in Lemma \ref{chas0}. We need
to show that $\chi_*([\gamma ,
\delta]_{\pi_1(C_{\varepsilon})})=[X,Y]_C$, where $[\gamma , \delta]_{\pi_1(C_{\varepsilon})}$ stands for the Goldman-Turaev bracket between the free homotopy classes of $\gamma$ and $\delta$. This is a
consequence of Lemma \ref{chas20}: If $p$ corresponds to
the linked pair $(P,Q)$, then we have seen that $sign(P,Q)=\epsilon(p,\gamma,\delta)$ in the proof of Lemma \ref{chas20}. On the other hand, $(X\cdot_{(P, Q)}Y)$ is the conjugacy class of the image under $\chi$
of $\gamma\cdot_p\delta$, which follows from Definition \ref{dotproduct} and the properties of the correspondence between $p$ and $(P,Q)$ given by Lemma \ref{chas20}.
\end{proof}

We remark that we have obtained the isomorphism of Lie algebras \[\chi_*:({\mathcal S}_1(C_{\varepsilon}),[\;,\;]_{\pi_1(C_{\varepsilon})}) \to ({\mathcal S}(C), [\; ,\; ]_C) \] that we desired.

%Now we are in conditions to prove Theorem \ref{jacobi}.

\subsection{Proof of Theorem\ref{jacobi}}

We shall check that the bracket on $\mathcal S(M)$ given by Definition \ref{d:bracket} satisfies the axioms of a Lie algebra. It is bilinear by definition, and we would like to remark that anti-symmetry can be checked directly, showing that the bijection between $LP_2(X,Y)$ and $LP_2(Y,X)$ that sends $(P,Q)$ to $(Q,P)$ verifies that \[ (X\cdot_{(P, Q)}Y)=(Y\cdot_{(Q, P)}X) \mbox{ and } sign(P,Q)=-sign(Q,P)\] It can also be proved by the same method we use for the Jacobi identity, which we shall check next. 

Consider strings $X,Y,Z\in S(M)$, and cyclically reduced representatives $\alpha,\beta,\gamma$ of them. Applying Proposition \ref{alphabet} to $\mathcal U=\{\alpha,\beta,\gamma\}$ we see that the set $C=\alpha(I)\cup\beta(I)\cup\gamma(I)$ is a one dimensional complex with geodesic edges. Recall that the string bracket $[\;,\;]$ restricts to $\mathcal S(C)\subset \mathcal S(M)$, where it defines a Lie algebra by Lemma \ref{cor:inc}. By construction we have that $X,Y,Z$ are in $\mathcal S(C)$, thus the Jacobi identity between $X,Y,Z$ is obtained.

That shows Theorem \ref{jacobi}. We would also like to point out that there is a natural quotient $\mathcal S(M)\to\mathcal S_1(M)$, since free homotopy of closed curves is a coarser equivalence than the one defining $S(X)$, and we can show that this map is a homomorphism \[ (\mathcal S(M),[\;,\;])\to (S_1(M),[\;,\;]_{\pi_1(M)}) \]
To check this we can consider $X,Y\in S(M)$, take representatives $\alpha,\beta$ and let $\gamma,\delta$ be the curves given by Lemma \ref{chas0} for $C=\alpha(I)\cup\beta(I)$. Then $\gamma$ and $\delta$ are freely homotopic to $\alpha$ and $\beta$, and the same argument for Lemma \ref{cor:inc} shows that $[X,Y]$ maps to $[\gamma,\delta]_{\pi_1(M)}$ under the natural quotient.

As we commented in the introduction, it is also possible to define a co-bracket in a similar fashion as we did for the bracket in Definition \ref{d:bracket}, this time involving $LP_1$. That gives a Lie bi-algebra structure on $\mathcal S(M)$, and the axioms can also be verified using one dimensional complexes and results of \cite{MR2041630}.  
%In this subsection we show the Jacobi  identity for loop classes.

%Let $X$, $Y$ and $Z$ be strings in $S(M)$. We will verify the Jacobi identity for them. Let $\alpha$, $\beta$ and $\gamma$ be cyclically reduced representatives of $X$, $Y$ and $Z$, respectively, and take factorizations adapted to the three curves. Let $C=\alpha(I)\cup\beta(I)\cup\gamma(I)$. Then $C$ is a one dimensional complex and $X$, $Y$, $Z$ are in $S(C)$. Using \ref{cor:inc}, we know that $({\mathcal S}(C),[,]_C)$ is a Lie algebra. Since the inclusion ${\mathcal S}(C) \to {\mathcal S}(M)$ is clearly an injective homomorpism, $X$, $Y$ and $Z$ satisfy Jacobi identity.  The co-Jacobi identity follows the same lines using one dimensional complexes and results of  \cite{MR2041630}.

\section{Infinite lifts and intersections} \label{s.lift int}

With the goal of proving Theorem \ref{simple} in mind, we will study the intersections of a cyclically reduced curve with its inverse by looking at the horizontal lifts in the loop bundle. 

Throughout this section we fix a cyclically reduced, non-trivial closed curve $\alpha$, and write $x=\alpha(0)$. Let $\tilde \alpha$ be the horizontal lift of
$\alpha$ to ${\mathcal E}_x(M)$ such that $\tilde \alpha(0)=id_x$. We consider the set 
 
\[ \Lambda_{\alpha}=\bigcup_{n\in {\mathbb Z}} [\alpha]^n \tilde \alpha(I)  \]
which is nothing but the infinite lift of $\alpha$ through $id_x$. Since $\alpha$ is cyclically reduced, $\Lambda_{\alpha}$ is a {\em line} in ${\mathcal E}_x(M)$, i.e. is an embedding of $\R$ (it has no ``spikes''). We give it a standard orientation induced by the orientation of $\tilde{\alpha}$. Note that $\Lambda_{\alpha^{-1}}$ agrees with $\Lambda_{\alpha}$ as a set, but has the opposite orientation.

\begin{figure}[htbp]
\input{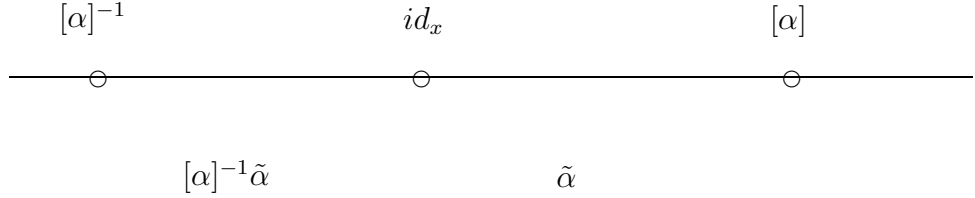}
\caption{The set $\Lambda_{\alpha}$}
\end{figure}

\subsection{Intersections as elements of $\mathcal L_x$.} \label{ss.lift int1}

For every $A\subset\mathcal E_x(M)$ define
\[ T(A)=\left\{ g \in {\mathcal L}_x(M) \;\; \mbox{such that} \;\;
g A\cap A \neq \emptyset\right\}\]
% Let $T_1(\alpha)$ be the set of
%those $g\in T(\tilde \alpha(I))$ such that
%$g\Lambda_{\alpha}\neq\Lambda_{\alpha}$. 

Consider $g\in T(\Lambda_{\alpha})$ so that $g\Lambda_{\alpha}\neq \Lambda_{\alpha}$. We show that $g\Lambda_{\alpha}\cap \Lambda_{\alpha}$ must be a compact arc (or a point): Note that $\mathcal E_x(M)$ contains no non-trivial horizontal loops, and if $g\Lambda_{\alpha}\cap \Lambda_{\alpha}$ contains a ray, then we note that $g[\alpha]^n\tilde{\alpha}(I)\subset \Lambda_{\alpha}$ for some $n$. Thus the projection of $g[\alpha]^n\tilde{\alpha}$ is some permutation of $\alpha$, which leads to an equation of the form $gw^{n+1}=w^m$ where $[\alpha]=w^k$ (for some such powers), and this implies $g\Lambda_{\alpha}= \Lambda_{\alpha}$.

Then to each $g\in T(\Lambda_{\alpha})$ with $g\Lambda_{\alpha}\neq \Lambda_{\alpha}$  we can associate a horizontal curve $b_g\subset \Lambda_{\alpha}$, with the same orientation as $\Lambda_{\alpha}$, such that $b_g(I)=g\Lambda_{\alpha}\cap \Lambda_{\alpha}$. Let $a_g \subset \Lambda_{\alpha}$ be the horizontal curve such that $ga_g(I)=b_g(I)$ and $a_g$ has the orientation carried from $b_g$ by the action of $g$ (thus we may write $ga_g=b_g$). Note that the pair $(a_g,b_g)$ determines $g$, since $\mathcal L_x(M)$ acts freely on $\mathcal E_x(M)$.

\begin{definition} \label{def:orient} Let $g\in T(\Lambda_{\alpha})$.

\begin{itemize}
\item We say that $g$ {\em preserves orientation} if either $g\Lambda_{\alpha}=\Lambda_{\alpha}$ or the orientation of $a_g$ agrees with that of $\Lambda_{\alpha}$.  Let $T^+(\Lambda_{\alpha})$ be the set of orientation preserving elements of $T(\Lambda_{\alpha})$.

%By convention, this includes the case when $a_g$ is constant.
\item  We say that $g$ {\em reverses orientation} if the orientation of $a_g$ is opposite to that of $\Lambda_{\alpha}$. We denote by $T^-(\Lambda_{\alpha})$ the set of orientation reversing elements of $T(\Lambda_{\alpha})$.
\end{itemize}

The case when $a_g$ is constant shall be regarded as both orientation preserving and reversing. When we want to exclude this case we say that $g$ {\em strictly} preserves or reverses orientation.

\end{definition}

\begin{remark} \label{inv T} The sets $T(\Lambda_{\alpha})$, $T^+(\Lambda_{\alpha})$ and $T^-(\Lambda_{\alpha})$ are closed under taking inverses.
\end{remark}

\begin{remark} \label{rm: no potencias} If $g\in T(\Lambda_{\alpha})$ and $g\Lambda_{\alpha}\neq \Lambda_{\alpha}$, then we have $l(a_g)=l(b_g)<l(\alpha)$.
\end{remark}

\begin{proof}
Assume the contrary, i.e. that $l(b_g)\geq l(\alpha)$. Then if $g\in T^+(\Lambda_{\alpha})$ we get a contradiction by showing that $g\Lambda_{\alpha}= \Lambda_{\alpha}$, with a similar argument as in the case when $g\Lambda_{\alpha}\cap \Lambda_{\alpha}$ contained a ray. On the other hand, if $g\in T^-(\Lambda_{\alpha})$ we can deduce that $\alpha^{-1}$ is a permutation of $\alpha$, which would mean that $\alpha$ is trivial. 
\end{proof}
%t_g

Next we shall see that each $g\in T(\Lambda_{\alpha})$ with $g\Lambda_{\alpha}\neq \Lambda_{\alpha}$ defines naturally an intersection pair between $\alpha$ and $\alpha^{-1}$. Let $t_g=\pi\circ a_g=\pi\circ b_g$ and $\epsilon = \pm 1$ according to whether $g$ preserves or reverses orientation (in case $a_g$ is constant the pick makes no difference). Recalling that $b_g(I)=g\Lambda_{\alpha}\cap \Lambda_{\alpha}$  and projecting what we see at a neighborhood of $b_g$ in $g\Lambda_{\alpha}\cup \Lambda_{\alpha}$, we can find geodesic curves $r,s,u,v$ such that $(r, t_g, s), (u, t^{-\epsilon}_g, v)$ is an intersection pair with $rt_gs\subset\alpha$ and $ut^{-\epsilon}_gv\subset \alpha^{-1}$, where the inclusions are modulo permutation (we do not have to consider powers of $\alpha$ or $\alpha^{-1}$ because of Remark \ref{rm: no potencias}). We depict this situation in Figure \ref{fig:2}.

\begin{figure}[htbp]
\input{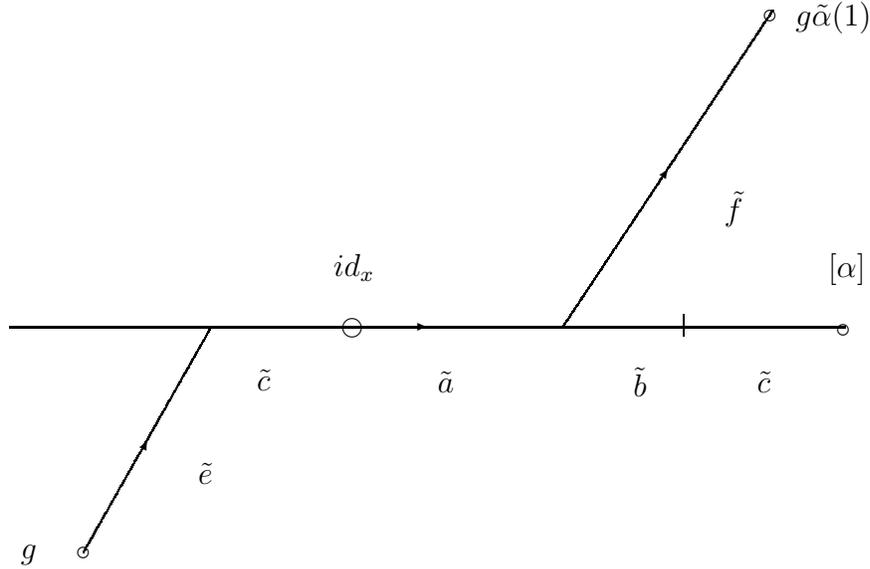}
\caption{A schematic example of $g\Lambda_{\alpha}\cup \Lambda_{\alpha}$ for $g\in T(\Lambda_{\alpha)}$. All shown curves are
horizontal. 
We assume $\alpha=abc=ecaf$ and $g=[c^{-1}e^{-1}]$, note that $t_g= ca$.} \label{fig:2}
\end{figure}

Then we can give cyclic factorizations $P_g=(\alpha_1,t_g)$ of $\alpha$ and $Q_g=(\beta_1,t_g^{-\epsilon})$ of $\alpha^{-1}$ so that:

\begin{itemize}
\item the horizontal lift of $t_g\alpha_1$ at $b_g(0)$ is contained in $\Lambda_{\alpha}$, and
\item the horizontal lift of $t_g^{-\epsilon}\beta_1$ at $a_g^{-\epsilon}(0)$ is contained in $\Lambda_{\alpha}$.
\end{itemize}

In other words, $\alpha_1$ can be obtained by projecting a curve spanning a component of  $$\Lambda_{\alpha}-\bigcup_{n\in\Z}[\alpha]^nb_g(I)$$ with the orientation given by $\Lambda_{\alpha}$, while $\beta_1$ is the analog for the translates of $a_g(I)$ and the reverse orientation to that of $\Lambda_{\alpha}$.

Notice that if $g$ is orientation preserving, i.e. when $\epsilon=1$, we get an intersection pair of type either (3) or (1) between $\alpha$ and $\alpha^{-1}$. In the orientation reversing case, when $\epsilon=-1$, we get an intersection pair of type either (2) or (1). In Figure \ref{fig:2} we depict a situation where $g$ is orientation preserving.

Observe that an element of the form $h=[\alpha]^n g [\alpha]^m$ induces the same intersection pair as $g$, since $b_h=[\alpha]^nb_g$ and $a_h=[\alpha]^{-m}a_g$ induce the same cyclic factorizations of $\alpha$ and $\alpha^{-1}$. With this in mind we define $$T_1(\alpha) =\{g\in T(\tilde{\alpha}(I)) \,:\, g\Lambda_{\alpha}\neq \Lambda_{\alpha} \mbox{ and }\tilde{\alpha}(1)\notin a_g(I)\cup b_g(I)  \} $$ 
noting that the conditions amount to ask that $a_g$ and $b_g$ meet $\tilde{\alpha}(I)$ but not $\tilde{\alpha}(1)$. Since $\tilde{\alpha}([0,1))$ is a fundamental domain for $\Lambda_{\alpha}$ under translations by powers of $[\alpha]$, we get the following: 

\begin{remark}\label{mn}
Let $g\in T(\Lambda_{\alpha})$ such that  $g\Lambda_{\alpha}\neq \Lambda_{\alpha} $. Then there is a unique $g_1\in T_1(\alpha)$ and integers $m,n$ such that 
\[g=[\alpha]^n g_1[\alpha]^m.    \]

\end{remark}

The next result describes the intersections of a string with its inverse in terms of elements of the loop group. Let $\mathcal I(\alpha, \alpha^{-1})$ denote the set of intersection pairs between  $\alpha$ and $\alpha^{-1}$. 

\begin{lemma}\label{bijection}
 The map \[ T_1(\alpha)\to \mathcal I(\alpha,\alpha^{-1})\] that takes $g\to (P_g, Q_g)$ is a bijection.
\end{lemma}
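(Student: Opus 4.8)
The plan is to construct an explicit two-sided inverse to the map $g\mapsto(P_g,Q_g)$ and check both compositions are the identity. First I would unwind what an intersection pair $(A,B)\in\mathcal I(\alpha,\alpha^{-1})$ gives us: a cyclic factorization of $\alpha$ with core curve $t$ and a cyclic factorization of $\alpha^{-1}$ with core curve $t^{-\epsilon}$ (the sign $\epsilon$ being determined by the type of the intersection pair, with type $(1)$ allowing either sign), together with the geodesic ``collar'' segments $r,s,u,v$. The key observation is that specifying such a pair is the same as specifying where, along the line $\Lambda_\alpha$, the core segment $t$ sits, \emph{and} a second placement of $t$ (now as $t^{-\epsilon}$, read along $\alpha^{-1}$) which is a different curve in $\Lambda_\alpha$ with the reversed orientation if $\epsilon=1$ and the same orientation if $\epsilon=-1$. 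Concretely, from $(A,B)$ and the distinguished basepoint-parameter of $\alpha$ I can locate a unique horizontal subcurve $b\subset\Lambda_\alpha$ projecting to $t$ whose initial point lies in the fundamental domain $\tilde\alpha([0,1))$ and such that the horizontal lift of $t\alpha_1$ at $b(0)$ stays in $\Lambda_\alpha$; likewise a unique horizontal subcurve $a\subset\Lambda_\alpha$ (with orientation $\epsilon$ relative to $\Lambda_\alpha$) projecting to $t$, normalized by the analogous condition using $\beta_1$. Since $\mathcal L_x(M)$ acts freely and transitively on the fibers, there is a unique $g\in\mathcal L_x(M)$ with $ga=b$; this is the candidate preimage.

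The main steps would then be: (i) verify $g\in T(\Lambda_\alpha)$ with $g\Lambda_\alpha\neq\Lambda_\alpha$ — the first because $ga(I)=b(I)\subset\Lambda_\alpha\cap g\Lambda_\alpha$ is nonempty, the second because if $g\Lambda_\alpha=\Lambda_\alpha$ then, by the argument already given in Section~\ref{ss.lift int1} relating such $g$ to permutations of $\alpha$, the core segment $t$ would force $\alpha^{-1}$ to be a permutation of $\alpha$, contradicting that $\alpha$ is cyclically reduced and nontrivial; (ii) check that the associated curves $a_g,b_g$ of $g$ coincide with $a,b$, which amounts to seeing that $b(I)$ is exactly the full intersection $g\Lambda_\alpha\cap\Lambda_\alpha$ and not a proper sub-arc — here one uses that the intersection-pair data records \emph{all} of the local overlap (the segments $r,s$ resp. $u,v$ leave the overlap region transversally or in the opposite direction), together with Remark~\ref{rm: no potencias} to rule out the overlap running longer than $l(\alpha)$; (iii) check the normalization condition $\tilde\alpha(1)\notin a_g(I)\cup b_g(I)$, i.e. that $g\in T_1(\alpha)$, which is exactly how we normalized the placements of $a$ and $b$; (iv) trace through the definitions to see $P_g=A$'s cyclic factorization and $Q_g=B$'s, and that the sign/type bookkeeping matches — this is essentially Definition~\ref{def:orient} and the paragraph constructing $(P_g,Q_g)$ read backwards.

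For surjectivity one runs (i)--(iv) to see every $(A,B)$ is hit; for injectivity, if $g,g'\in T_1(\alpha)$ induce the same $(P_g,Q_g)=(P_{g'},Q_{g'})$, then they have the same core curve $t=t_g=t_{g'}$ and the same normalized placements, so $b_g(I)=b_{g'}(I)$ and $a_g(I)=a_{g'}(I)$ as subsets of $\Lambda_\alpha$ (the cyclic factorizations $P_g,Q_g$ together with the $T_1$-normalization pin down these arcs uniquely inside the fundamental domain); since also the orientations agree (recorded by $\epsilon$, hence by the type of the pair), we get $a_g=a_{g'}$ and $b_g=b_{g'}$ as oriented curves, and freeness of the action forces $g=g'$.

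\textbf{Main obstacle.} I expect the delicate point to be step (ii): showing that the arc $b$ extracted from the combinatorial intersection-pair data is precisely the maximal overlap $g\Lambda_\alpha\cap\Lambda_\alpha$, with no ``hidden'' extra overlap beyond the core segment $t$. In principle $g\Lambda_\alpha$ and $\Lambda_\alpha$ could agree along a longer arc than the one visible in the local picture of Figure~\ref{fig:2}; ruling this out requires combining the reducedness of $\alpha$, the fact that $\alpha$ and $\alpha^{-1}$ cannot overlap (noted in Section on basic properties), and Remark~\ref{rm: no potencias} which caps the overlap length below $l(\alpha)$. Once that length bound is in place, the overlap sits inside a single fundamental domain's worth of $\Lambda_\alpha$ and the local analysis near $b_g$ genuinely captures everything, so the correspondence with intersection pairs of the curves $\alpha,\alpha^{-1}$ (rather than their powers) is complete and the bijection follows.
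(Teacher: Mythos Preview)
Your proposal is correct and follows essentially the same route as the paper's proof: both construct the inverse by lifting the core segment of the intersection pair into $\Lambda_\alpha$, obtaining $g$ from the free action, and then using the $T_1$-normalization (equivalently, Remark~\ref{mn}) to pin down a unique representative; for injectivity both argue that the cyclic factorizations determine $a_g$ and $b_g$ up to translation by powers of $[\alpha]$, which the normalization resolves. The paper's surjectivity is phrased slightly differently---it lifts the whole infinite curve $\eta\beta_1$ at once to produce $g\Lambda_\alpha$ rather than placing $a$ and $b$ separately---and it dispatches your ``main obstacle'' (ii) in a single sentence (``the definition of intersection pair implies, by taking the appropriate horizontal lifts, that $\Lambda_\alpha\cap g\Lambda_\alpha=\tilde\xi(I)$''), relying on the collar conditions in Definition~\ref{def:lp} to force the overlap to terminate exactly at the endpoints of the lifted core; your more explicit unpacking of this step, including the appeal to Remark~\ref{rm: no potencias}, is a reasonable way to flesh out what the paper leaves implicit.
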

\begin{proof}
For injectivity, consider the way in which an horizontal curve $\nu\subset\Lambda_{\alpha}$ with $l(\nu)<l(\alpha)$ defines a cyclic factorization of $\alpha$ or $\alpha^{-1}$, as was used in the construction of the map $g\to (P_g, Q_g)$. Then observe that two such curves $\nu_1$ and $\nu_2$ yield the same cyclic factorization iff $\nu_1=[\alpha]^n\nu_2$ for some $n\in \Z$. Combining this fact with Remark \ref{mn} gives injectivity.

To show surjectivity consider $(P,Q)\in \mathcal I(\alpha,\alpha^{-1})$ and write $P=(\alpha_1,\xi)$, $Q=(\beta_1,\eta)$ as in Definition \ref{def:lp cyclic}. Let $\epsilon=\pm 1$ so that $\eta=\xi^{-\epsilon}$, i.e. $\epsilon=1$ for $(P,Q)$ of type (3), while $\epsilon=-1$ for type (2), and either one for type (1). Since $\xi\alpha_1$ is a permutation of $\alpha$, there are horizontal curves $\tilde\xi$ and $\tilde\alpha_1$, projecting to $\xi$ and $\alpha_1$ respectively, such that the concatenation $\tilde\xi\tilde\alpha_1$ is well defined and contained in $\Lambda_{\alpha}$. Note that $\tilde\xi$ and $\tilde\alpha_1$ have the orientation of $\Lambda_{\alpha}$, and that $\Lambda_{\alpha}$ is the infinite lift of $\xi\alpha_1$ that continues $\tilde\xi\tilde\alpha_1$.   Now we consider the infinite horizontal lift of $\eta\beta_1$ starting at $\tilde\xi^{-\epsilon}(0)$. Since $\eta\beta_1$ is a permutation of $\alpha^{-1}$, this infinite lift must be of the form $g\Lambda_{\alpha}$ for some $g\in T(\Lambda_{\alpha})$. Notice that the definition of intersection pair implies, by taking the appropriate horizontal lifts, that $\Lambda_{\alpha}\cap g\Lambda_{\alpha} = \tilde\xi(I)$. Thus it becomes direct to verify that $(P_g,Q_g)=(P,Q)$, and we can use Remark \ref{mn} to finish the proof.

\end{proof}

Through the proof of Lemma \ref{bijection} we see that $T_1(\alpha)$ is a choice of a restriction of domain, in order to obtain a bijection from the construction that associates $g\to (P_g, Q_g)$. This choice satisfies the following nice property:
 
\begin{remark}\label{inverse}
$g\in T_1(\alpha)$ iff $g^{-1}\in T_1(\alpha)$. Moreover, $$b_{g^{-1}}=a_g^{\epsilon} \mbox{ and } a_{g^{-1}} = b_g^{\epsilon}$$ where $\epsilon=\pm 1$ according to whether $g$ is orientation preserving or reversing.

\end{remark}

In later sections we shall focus on the linked pairs, i.e. the intersection pairs that are relevant for the bracket.

\begin{definition} \label{def:T0} We define $T_0(\alpha)\subseteq T_1(\alpha)$ as the set of elements that correspond to linked pairs under the bijection of Lemma \ref{bijection}.
\end{definition}

Note that by Remark \ref{rm: no potencias} and Lemma \ref{bijection}, the set $T_0(\alpha)$ is in bijection with $LP_2(X,X^{-1})$, where $X$ is the conjugacy class of $[\alpha]$. From these same results we also get that $LP_1(X)=LP_2(X,X)$, which is in natural bijection with $LP_2(X,X^{-1})$. 
Observe also that by Lemma \ref{chas20} a string $X$ is simple, as defined in the introduction, iff $LP_1(X)=\emptyset$, or equivalently, iff $LP_2(X,X^{-1})=\emptyset$.

\subsection{Orientation reversing elements and unique intersections.}
 
The orientation properties of the elements of $T_0(\alpha)$, which correspond to the type of their associated linked pairs, will play a major role in proving Theorem \ref{simple}. Next we study the key properties of the orientation reversing case.

\begin{lemma} \label{reversing0} Let $\xi$ and $\eta$ be non-constant segments of $\Lambda_{\alpha}$ going in the positive orientation. Then if $\pi\circ\xi=(\pi\circ\eta)^{-1}$  we have $\xi(I)\cap\eta(I)=\emptyset$ and $l(\alpha)>2l(\xi)$. 

\end{lemma}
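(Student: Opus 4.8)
The strategy is to work entirely inside the line $\Lambda_\alpha$, using the fact that it is an embedded copy of $\mathbb R$ carrying the positive orientation, and the fact that $\mathcal E_x(M)$ contains no nontrivial horizontal loops. First I would set up the picture: write $\xi$ and $\eta$ as orientation-compatible sub-curves of $\Lambda_\alpha$, so each corresponds to a sub-interval of $\mathbb R$ (say $\xi$ spans $[s_1,s_2]$ and $\eta$ spans $[t_1,t_2]$ with $s_1<s_2$, $t_1<t_2$ in the parametrization). The hypothesis $\pi\circ\xi=(\pi\circ\eta)^{-1}$ says that projecting the two sub-curves to $M$ gives the same piecewise geodesic curve with opposite orientations.

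For the disjointness claim $\xi(I)\cap\eta(I)=\emptyset$, I would argue by contradiction. If the sub-intervals overlap in $\mathbb R$, then on the common part $\Lambda_\alpha$ is traversed once by $\xi$ and once by $\eta$, both with the positive orientation, so the projections $\pi\circ\xi$ and $\pi\circ\eta$ \emph{agree} (with matching orientations) on the overlap. But we assumed $\pi\circ\xi=(\pi\circ\eta)^{-1}$, so on the overlap we would get a reduced curve equal to its own inverse — forcing it to be constant, hence the overlap reduces to a point. If that single point is interior to both, the projection near it still has two opposite orientations matching one common orientation along $\Lambda_\alpha$, which is again impossible unless the curves are constant there; a careful local analysis (using that $\Lambda_\alpha$ is an embedded line, so locally it is a single arc through that point, and $\xi,\eta$ run along it in the same sense) rules out even endpoint contact beyond the trivial case. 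Thus $\xi(I)\cap\eta(I)=\emptyset$. The cleanest phrasing: since $\pi$ restricted to a sufficiently short sub-arc of $\Lambda_\alpha$ is injective on each geodesic edge and the lift is unique, two positively-oriented sub-arcs that meet must project to curves that \emph{overlap} in the sense of Section~2.2; but a reduced curve and its inverse cannot overlap unless constant, which contradicts non-constancy of $\xi,\eta$.

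For the length bound $l(\alpha)>2l(\xi)$: note $l(\eta)=l(\pi\circ\eta)=l(\pi\circ\xi)=l(\xi)$, so $\xi$ and $\eta$ are two disjoint positively-oriented sub-arcs of $\Lambda_\alpha$, each of length $l(\xi)$, and their union has total length $2l(\xi)$. I would then show that $\xi(I)$ and $\eta(I)$ both lie within a single fundamental domain $[\alpha]^k\tilde\alpha([0,1))$ for some $k$ — or more precisely, that they are disjoint \emph{translates-avoiding} arcs: if $2l(\xi)\ge l(\alpha)$, then $\xi(I)\cup\eta(I)$ (being a disjoint union of arcs of total length $\ge l(\alpha)$ on the line $\Lambda_\alpha$) would contain a full translate $[\alpha]^k\tilde\alpha(I)$, or at least enough of $\Lambda_\alpha$ that some translate $[\alpha]^k$ maps part of $\xi$ into $\eta$ or back into $\xi$. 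Chasing this through: a translate of $\xi$ by $[\alpha]^k$ would then overlap $\xi$ or $\eta$ on $\Lambda_\alpha$, and projecting gives either a permutation relation making $\alpha$ non-primitive in a forbidden way, or — in the case involving $\eta$ — that $\alpha^{-1}$ is a permutation of $\alpha$, hence $\alpha$ trivial (as in the proof of Remark~\ref{rm: no potencias}). Either way we contradict the standing hypothesis that $\alpha$ is cyclically reduced and non-trivial. Hence $2l(\xi)<l(\alpha)$.

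\textbf{Main obstacle.} The delicate point is the disjointness argument at a single shared \emph{point} (as opposed to a shared sub-arc): one must rule out that $\xi$ and $\eta$ merely touch at one point of $\Lambda_\alpha$ without their images overlapping as curves. Since both run positively along the same embedded line, touching at an interior point of both does force a genuine (length-positive) overlap — but spelling this out requires care with the piecewise-geodesic structure and the behavior at the vertices of $\Lambda_\alpha$, and it is where one actually uses that $\Lambda_\alpha$ has ``no spikes.'' The length estimate is then comparatively routine bookkeeping on $\mathbb R$, modulo invoking the translation-by-$[\alpha]$ structure and the no-nontrivial-horizontal-loops fact exactly as in Remark~\ref{rm: no potencias}.
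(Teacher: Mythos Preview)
Your disjointness argument is the same idea as the paper's: an interval overlap of $\xi$ and $\eta$ projects to an overlap of $\gamma:=\pi\circ\xi$ with $\gamma^{-1}$, which is impossible for a reduced non-constant $\gamma$. Where you are vague is the single-point case. Since both arcs are positively oriented on the line $\Lambda_\alpha$, a single shared point can only be an endpoint of each, say $\xi(1)=\eta(0)$ or $\eta(1)=\xi(0)$. The paper dispatches this cleanly: such endpoint contact makes $\gamma\gamma^{-1}$ (or $\gamma^{-1}\gamma$) appear as a subword of a permutation of $\alpha$, contradicting cyclic reducedness. Your ``careful local analysis'' never reaches this observation.

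For the length bound there is a genuine gap. Your pigeonhole intuition is sound: if $2l(\xi)\ge l(\alpha)$ then the gaps between consecutive translates $[\alpha]^k\xi$ have length $l(\alpha)-l(\xi)\le l(\xi)=l(\eta)$, so $\eta$ must meet some $[\alpha]^k\xi$. But the contradiction you draw from this is wrong. You invoke primitivity of $\alpha$, which is \emph{not} a hypothesis of this lemma, and the claim that an overlap of $[\alpha]^k\xi$ with $\eta$ forces ``$\alpha^{-1}$ to be a permutation of $\alpha$'' simply does not follow (the overlap could be much shorter than $l(\alpha)$). The correct move is immediate: $[\alpha]^k\xi$ and $\eta$ still satisfy the hypotheses of the lemma, so the \emph{first claim} itself says they are disjoint --- contradiction. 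The paper does essentially this, but constructively: after translating $\eta$ into the fundamental domain $[\xi(0),[\alpha]\xi(0))$, it applies the first claim to the pair $(\eta,[\alpha]\xi)$ to get $[\alpha]\xi(0)\notin\eta(I)$, and then reads off the factorization $\alpha_0=\gamma\,a\,\gamma^{-1}\,b$ of the permutation $\alpha_0$ based at $\pi(\xi(0))$, with $a,b$ non-constant, giving $l(\alpha)>2l(\gamma)=2l(\xi)$ directly.
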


\begin{proof}
Write $\gamma=\pi\circ\xi$. Then an overlap between  $\xi$ and $\eta$ would project to an overlap between $\gamma$ and $\gamma^{-1}$, and if $\xi$ and $\eta$ meet at an endpoint, that would project to a reduction of $\alpha$, of the form $\gamma\gamma^{-1}$ or $\gamma^{-1}\gamma$. Thus we get the first claim. The second one comes from considering a permutation $\alpha_0$ of $\alpha$ so that its horizontal lift starting at $\xi(0)$ is contained in $\Lambda_{\alpha}$. Note that such lift ends at $[\alpha]\xi(0)$, which is not in $\eta(I)$ by the first claim applied to $\eta$ and $[\alpha]\xi$. Thus we obtain $\alpha_0=\gamma a \gamma^{-1} b$ with $a$ and $b$ non-constant, so $$l(\alpha)=l(\alpha_0)>2l(\gamma)=2l(\xi).$$

\end{proof}

\begin{lemma}\label{reversing}
Let $g\in T^-(\Lambda_{\alpha})$. Then $[\alpha^n]a_g(I)\cap [\alpha^m]b_g(I)=\emptyset$ for all $n,m\in\Z$.
\end{lemma}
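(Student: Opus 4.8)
The goal is to show that if $g$ strictly reverses orientation on $\Lambda_\alpha$, then no two $\Z$-translates of $a_g(I)$ and $b_g(I)$ can meet. The key observations are that $a_g$ and $b_g$ are both segments of $\Lambda_\alpha$ traversed in the positive orientation (by the convention in the construction), that $g a_g = b_g$, and that $g$ reversing orientation means the orientation that $g$ carries from $a_g$ is opposite to that of $\Lambda_\alpha$; since $b_g$ is positively oriented, this forces $\pi\circ a_g = (\pi\circ b_g)^{-1}$ as curves. This is exactly the hypothesis of Lemma \ref{reversing0} with $\xi = a_g$ and $\eta = b_g$, provided we translate everything into a single copy of $\Lambda_\alpha$.

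First I would recall that translation by $[\alpha]$ is a length-preserving, orientation-preserving automorphism of $\Lambda_\alpha$ commuting with $\pi$ up to permutation, so $[\alpha^n]a_g$ and $[\alpha^m]b_g$ are again positively oriented segments of $\Lambda_\alpha$, and their projections still satisfy $\pi\circ([\alpha^n]a_g) = (\pi\circ([\alpha^m]b_g))^{-1}$ up to the fixed permutations. Thus it suffices to prove the statement for the ``difference'' translate: replacing $g$ by $[\alpha]^{-m}g[\alpha]^{-m}$ or rather shifting indices, it is enough to show $a_g(I)\cap [\alpha^k]b_g(I)=\emptyset$ for all $k\in\Z$. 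Now I would apply Lemma \ref{reversing0} directly to the pair $\xi = a_g$ and $\eta = [\alpha^k]b_g$: both are positive segments of $\Lambda_\alpha$, and the first has projection the inverse of the second's projection (up to permutation, which does not affect being inverse curves at the level of the line $\Lambda_\alpha$). Lemma \ref{reversing0} then yields $\xi(I)\cap\eta(I)=\emptyset$, which is the claim.

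The one subtlety — and the step I expect to be the main obstacle — is the bookkeeping needed to verify that $\pi\circ a_g$ and $\pi\circ b_g$ are genuinely inverse curves (not merely ``related by an orientation reversal and a permutation of $\alpha$''), so that Lemma \ref{reversing0} applies verbatim. This is where the definition of $T^-(\Lambda_\alpha)$ must be unpacked carefully: $ga_g = b_g$ with $a_g$ carrying the orientation opposite to $\Lambda_\alpha$, while $b_g$ is defined to carry the orientation of $\Lambda_\alpha$; composing $\pi$ with the equation $ga_g=b_g$ and using that $\pi\circ g = \pi$ on $\mathcal E_x(M)$ shows $\pi\circ a_g$ and $\pi\circ b_g$ have the same image but opposite orientations, i.e. each is the inverse of the other as an element of $\Omega$. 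Once this identification is in place, Lemma \ref{reversing0} (applied after an arbitrary translation, which is harmless) closes the argument. I would also remark that the case $a_g$ constant is excluded since $g$ strictly reverses orientation, so there is no degenerate situation to handle separately.
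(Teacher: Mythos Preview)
Your overall strategy --- reduce to Lemma~\ref{reversing0} --- is the same as the paper's, but the orientation bookkeeping is off in a way that breaks the argument as written. From $ga_g=b_g$ and $\pi\circ g=\pi$ you get $\pi\circ a_g=\pi\circ b_g$ as \emph{equal} curves, not inverse curves. The orientation-reversing hypothesis says $a_g$ is \emph{negatively} oriented in $\Lambda_\alpha$, not that its projection is reversed. So with your choice $\xi=a_g$, $\eta=[\alpha^k]b_g$, both hypotheses of Lemma~\ref{reversing0} fail: $\xi$ is not positively oriented, and $\pi\circ\xi\neq(\pi\circ\eta)^{-1}$. The fix is to take $\xi=[\alpha^n]a_g^{-1}$ (now positively oriented) and $\eta=[\alpha^m]b_g$; then $\pi\circ\xi=(\pi\circ a_g)^{-1}=(\pi\circ b_g)^{-1}=(\pi\circ\eta)^{-1}$ and Lemma~\ref{reversing0} applies directly --- this is exactly what the paper does.

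There is a second gap: you dismiss the case where $a_g$ is constant, but by Definition~\ref{def:orient} the paper explicitly includes that case in $T^-(\Lambda_\alpha)$, so the lemma must cover it. Lemma~\ref{reversing0} requires non-constant segments and does not apply. The paper handles this separately: if $a_g=e_p$ and $[\alpha^n]a_g(I)\cap[\alpha^m]b_g(I)\neq\emptyset$, then $[\alpha^m]g[\alpha^{-n}]$ fixes $p$, contradicting freeness of the $\mathcal L_x(M)$-action since $g$ is not a power of $[\alpha]$.
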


\begin{proof}
If $g$ reverses orientation strictly, we apply lemma \ref{reversing0} to the curves $[\alpha^n]a_g^{-1}$ and $[\alpha^m]b_g$. 
In case $a_g$ is constant, say $a_g=e_p$ for $p\in\Lambda_{\alpha}$, we get that $[\alpha^n]g[\alpha^{-m}]p=p$ which is absurd because the action of $\mathcal L_x(M)$ is free and $g$, being orientation reversing, is not a power of $\alpha$.
\end{proof}

\begin{remark} Note that for $g\in T^-(\Lambda_{\alpha})$ we have $l(t_g)<l(\alpha)/2$. 
\end{remark}

 We say that $\alpha$ has {\em unique intersection} if $T_1(\alpha)$ consists of only two elements, $g$ and $g^{-1}$ (by Remark \ref{inverse}).   By Lemma \ref{bijection} this is equivalent to say that $\mathcal I(\alpha,\alpha^{-1})$ has two elements. Observe that the definition of intersection pair makes sense for a curve in a general one dimensional complex, i.e. not necessarily embedded in a surface. Thus we may speak of unique intersection for curves in this more general setting.

Given a subgroup $G\subseteq \mathcal L_x(M)$  we can consider $\Lambda_{\alpha}/G$, the image of $\Lambda_{\alpha}$ in the quotient $\mathcal E_x(M)/G$, which is a one dimensional complex since $\Lambda_{\alpha}/\mathcal L_x(M)=\alpha(I)$ and $\alpha$ is piecewise geodesic. For $g\in \mathcal L_x(M)$, let $G_g$ be the subgroup generated by $g$ and $[\alpha]$.

\begin{lemma}\label{quotient}
Let $g\in T^-(\Lambda_{\alpha})$ and $\bar \alpha$ be the projection of $\tilde \alpha$ onto $\Lambda_{\alpha}/G_g$.
Then $\bar \alpha$ has a unique intersection.
\end{lemma}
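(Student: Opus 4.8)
The plan is to understand $\Lambda_\alpha/G_g$ as a one-dimensional complex and to translate ``unique intersection'' of $\bar\alpha$ into a statement about the group $G_g$, then exploit the fact that $g$ reverses orientation. First I would note that $G_g$ is generated by $[\alpha]$ and $g$, and that in $\mathcal E_x(M)/G_g$ the line $\Lambda_\alpha$ projects to $\bar\alpha(I) = \Lambda_\alpha/G_g$; since $[\alpha]$ already acts on $\Lambda_\alpha$ with quotient $\alpha(I)$, the further quotient by $g$ amounts to gluing $\Lambda_\alpha$ to itself along the single overlap $b_g(I)$ (and its $[\alpha]$-translates). The intersection pairs of $\bar\alpha$ with $\bar\alpha^{-1}$ correspond, by the analog of Lemma \ref{bijection} applied in the complex $\Lambda_\alpha/G_g$, to the elements $h \in T_1(\bar\alpha)$, i.e. to double cosets of elements $h$ in $\mathcal{L}_x(M)/G_g$ (modulo the normalizing action of $[\alpha]$) with $h\Lambda_\alpha/G_g \cap \Lambda_\alpha/G_g \neq \emptyset$ but $h\Lambda_\alpha/G_g \neq \Lambda_\alpha/G_g$. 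I would make this precise using Lemma \ref{lem:ident} to identify $\mathcal E_x$ of the complex $\Lambda_\alpha/G_g$ with its universal cover.

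The key point is then to show that, modulo $G_g$, the \emph{only} nontrivial self-overlap of $\Lambda_\alpha$ is the one coming from $g$ itself: any $h \in \mathcal{L}_x(M)$ with $h\Lambda_\alpha \cap \Lambda_\alpha \neq\emptyset$ that descends to a genuine intersection pair of $\bar\alpha$ must lie in the double coset $\langle[\alpha]\rangle g \langle[\alpha]\rangle$ (or its inverse). Here is where I would use that $g$ is orientation reversing. By Lemma \ref{reversing}, the translates $[\alpha^n]a_g(I)$ and $[\alpha^m]b_g(I)$ are pairwise disjoint for all $n,m$, and by the remark following Lemma \ref{reversing} we have $l(t_g) < l(\alpha)/2$. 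Together these say the core segment of $g$ is short, and the ``$a$-side'' and ``$b$-side'' of the gluing never collide in $\Lambda_\alpha$. So after quotienting by $G_g$, the complex $\bar\alpha(I)$ looks like a circle of length $l(\alpha)$ with a single short arc $t_g$ identified to its reverse — more precisely, the only nontrivial way $\Lambda_\alpha/G_g$ overlaps a translate of itself in $\mathcal E_x(M)/G_g$ is the one induced by $g$. I would verify this by a length/disjointness argument: any other overlap would lift to an overlap of $\Lambda_\alpha$ with a $G_g$-translate $h\Lambda_\alpha$ not equal to a $[\alpha]$-translate, and by Remark \ref{rm: no potencias} (applied in the group $G_g$, using freeness of the $\mathcal L_x$-action) such $h$ has short core; one then checks, using the disjointness from Lemma \ref{reversing} and $l(t_g)<l(\alpha)/2$, that no room is left for a second overlap, so $h \in \langle[\alpha]\rangle g^{\pm1}\langle[\alpha]\rangle$.

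Having shown that $T_1(\bar\alpha)$ consists exactly of the images of $g$ and $g^{-1}$, I conclude by Remark \ref{inverse} (in the setting of $\bar\alpha$) that these two images are distinct and are inverses of each other, so $T_1(\bar\alpha) = \{\bar g, \bar g^{-1}\}$ has exactly two elements, which by Lemma \ref{bijection} (the version for curves in a one-dimensional complex, as noted after the definition of unique intersection) means $\bar\alpha$ has a unique intersection.

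\textbf{Main obstacle.} The delicate step is the second one: rigorously ruling out ``spurious'' self-overlaps of $\Lambda_\alpha/G_g$ that do not come from $g$. One has to argue at the level of the tree $\widetilde{\Lambda_\alpha/G_g}$ (the universal cover of the complex $\Lambda_\alpha/G_g$) and control how lifts of $\bar\alpha$ can meet, which requires a careful bookkeeping of the $G_g$-action on $\Lambda_\alpha$ inside $\mathcal E_x(M)$. The orientation-reversing hypothesis is essential precisely here, through Lemma \ref{reversing}, because it prevents the two ``sides'' of the identification from interfering; without it (the orientation-preserving case) the quotient could acquire extra intersections. A secondary, more bookkeeping-type difficulty is stating the analog of Lemma \ref{bijection} for curves in a general one-dimensional complex with enough precision to apply it to $\bar\alpha$, but the excerpt already flags that this generalization is routine.
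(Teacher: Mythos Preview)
Your proposal contains the right key ingredient—Lemma \ref{reversing}—but you are making the argument far harder than it needs to be, and in doing so you introduce some real confusion.

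The paper's proof is almost immediate once you have Lemma \ref{reversing}. One first passes to the intermediate quotient $\Lambda_{\alpha}/\langle[\alpha]\rangle$, which is simply a circle parametrized by $\bar\alpha$. Lemma \ref{reversing} says precisely that the images of $a_g(I)$ and $b_g(I)$ in this circle are two \emph{disjoint} arcs. The further quotient by $g$ just glues these two arcs together. So $\Lambda_{\alpha}/G_g$ is, concretely, a circle with two disjoint sub-arcs identified; $\bar\alpha$ wraps once around the circle, and its only self-intersection segment is the identified arc, giving exactly the pair $(P,Q)$ and $(Q,P)$. That is the whole proof.

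By contrast, you propose to lift back to the universal cover $\widetilde{\Lambda_{\alpha}/G_g}$ and classify all $h$ with $h\Lambda_{\bar\alpha}\cap\Lambda_{\bar\alpha}\neq\emptyset$. This is not wrong in principle, but note two things. First, your bookkeeping is off: the relevant group is $\pi_1(\Lambda_{\alpha}/G_g)\cong G_g$ itself (realized via $\widetilde{\Lambda_{\alpha}/G_g}\cong G_g\Lambda_{\alpha}\subset\mathcal E_x(M)$), not ``$\mathcal L_x(M)/G_g$'', and the sentence about ``$\Lambda_{\alpha}/G_g$ overlapping a translate of itself in $\mathcal E_x(M)/G_g$'' conflates two different pictures. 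Second, the ``Main obstacle'' you identify—ruling out spurious overlaps coming from longer words in $G_g$—is not actually an obstacle: once you know the quotient complex is a circle with two disjoint arcs glued, the self-intersection structure of $\bar\alpha$ is visible by inspection. You can, if you insist, carry out the tree argument (for a reduced word $[\alpha]^{n_0}g^{\epsilon_1}\cdots g^{\epsilon_k}[\alpha]^{n_k}$, the disjointness of the $[\alpha]^n a_g(I)$ and $[\alpha]^m b_g(I)$ forces $k\le 1$), but it is redundant once the concrete description of the complex is in hand.

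In short: keep your use of Lemma \ref{reversing}, drop the lift to the universal cover, and simply describe the quotient complex explicitly. The intersection pairs of $\bar\alpha$ can then be read off directly.
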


\begin{proof}
By Lemma \ref{reversing} we have that all the translates $[\alpha]^na_g(I)$ and $[\alpha]^mb_g(I)$, for $n,m\in\Z$, are pairwise disjoint. Consider first the quotient $\Lambda_{\alpha}/\langle [\alpha]\rangle$, which is a circle that can be parametrized by the projection of $\tilde \alpha$. Then $a_g(I)$ and $b_g(I)$ project to  $\Lambda_{\alpha}/\langle [\alpha]\rangle$ as two disjoint intervals, and $\Lambda_{\alpha}/G_g$ is the further quotient obtained by identifying these two intervals (with the appropriate orientation). The interval resulting from this identification is the core of the only self-intersection pairs of $\bar\alpha$, which are only two (of the form $(P,Q)$ and $(Q,P)$), and this  shows the lemma. 

%By Lemma \ref{reversing} we have $G_g a_g(I)\cap \tilde \alpha(I)=b_g(I)$, i.e. the only identifications are $a_g(I)\sim b_g(I)$ and $\tilde\alpha(0)\sim \tilde\alpha(1)$.  
\end{proof}

\subsection{$\alpha$-oriented subgroups}\label{alpha_oriented} 

Now we turn our attention to the orientation preserving elements of $T(\Lambda_{\alpha})$. We will be showing that they generate subgroups with the following property.

\begin{definition}
Let $G\subset \mathcal L_x(M)$ be a subgroup. We  say that $G$ is {\em $\alpha$-oriented} if for every $g, h\in G$ with $g^{-1}h\in T(\Lambda_{\alpha})$ we have
$g^{-1}h\in T^+(\Lambda_{\alpha})$.
\end{definition}

Let us explain this definition in geometric terms. First note that each $g\in \mathcal L_x(M)$ induces an orientation on $g\Lambda_{\alpha}$ by carrying the standard orientation of $\Lambda_{\alpha}$ through the action of $g$. For two elements $g,h\in \mathcal L_x(M)$, the condition that $g^{-1}h\in T(\Lambda_{\alpha})$ is equivalent to saying that $g\Lambda_{\alpha}\cap h\Lambda_{\alpha}\neq\emptyset$, and we have $g^{-1}h\in T^+(\Lambda_{\alpha})$ exactly when the orientations of $g\Lambda_{\alpha}$ and $h\Lambda_{\alpha}$ agree on their (non-empty) intersection. If $G\subset \mathcal L_x(M)$ is a subgroup we have that $$ G\Lambda_{\alpha} = \bigcup_{g\in G}g\Lambda_{\alpha}$$ is a one dimensional complex whose connected components are simplicial trees, recalling the form of the intersections between translates of $\Lambda_{\alpha}$ by elements of $\mathcal L_x(M)$. Then we have: 

\begin{remark}
If $G$ is $\alpha$-oriented we can give $G\Lambda_{\alpha}$ a $G$-invariant orientation that extends the standard orientation of $\Lambda_{\alpha}$. 
\end{remark}

If a subgroup $G\subset \mathcal L_x(M)$ is generated by some elements of $T(\Lambda_{\alpha})$ then $G\Lambda_{\alpha}$ is connected, thus is a simplicial tree. The following technical result will be useful in this context.  

\begin{lemma} \label{convexity} Suppose $g,h_1,\ldots,h_n\in \mathcal L_x(M) $ satisfy that:
\begin{itemize}
\item $g^{-1}h_1, g^{-1}h_n \in T(\Lambda_{\alpha})$,
\item $g^{-1}h_i\notin T(\Lambda_{\alpha})$ for $i=2,\ldots,n-1$,
\item $h_{i-1}^{-1}h_i \in T(\Lambda_{\alpha})$ for $i=2,\ldots,n$.
\end{itemize}

Then $h_1^{-1}h_n \in T(\Lambda_{\alpha})$.
\end{lemma}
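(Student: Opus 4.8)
The plan is to work inside the simplicial tree $\mathcal T = G\Lambda_\alpha$, where $G$ is the subgroup generated by $g, h_1, \ldots, h_n$, and to translate the hypotheses into statements about how the lines $g\Lambda_\alpha$ and $h_i\Lambda_\alpha$ intersect inside $\mathcal T$. Recall from the discussion preceding the lemma that the connected components of $G'\Lambda_\alpha$ for any subgroup $G'$ are simplicial trees, and that the intersection of two distinct translates $k\Lambda_\alpha \cap k'\Lambda_\alpha$ is always a compact arc or a point (or empty). So each translate $h_i\Lambda_\alpha$ is an embedded line in $\mathcal T$, and the condition $g^{-1}h_i \in T(\Lambda_\alpha)$ says precisely that $g\Lambda_\alpha$ and $h_i\Lambda_\alpha$ meet.

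First I would set $L = g\Lambda_\alpha$, viewed as a line (a bi-infinite geodesic) in the tree $\mathcal T$, and consider the nearest-point projection $\mathrm{pr}_L \colon \mathcal T \to L$, which is well-defined and continuous on a tree. For each $i$, let $L_i = h_i\Lambda_\alpha$. The hypothesis is that $L_1 \cap L \neq \emptyset$ and $L_n \cap L \neq \emptyset$, while $L_i \cap L = \emptyset$ for $2 \le i \le n-1$, and consecutive lines meet: $L_{i-1} \cap L_i \neq \emptyset$. I want to conclude $L_1 \cap L_n \neq \emptyset$. The key tree-geometric fact I would use is that in a tree, if $L_{i-1} \cap L_i \neq \emptyset$ for all $i$, then the projections $\mathrm{pr}_L(L_i)$ form a "connected chain" in the sense that consecutive projected sets $\mathrm{pr}_L(L_{i-1})$ and $\mathrm{pr}_L(L_i)$ cannot be separated along $L$ by a point lying strictly between them — more precisely, the union $\bigcup_i \mathrm{pr}_L(L_i)$ is connected (an interval or point in $L$). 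Indeed, since $L_{i-1}$ and $L_i$ share a point, the bridge from $L_{i-1}$ to $L_i$ is trivial, hence $\mathrm{pr}_L(L_{i-1}) \cap \mathrm{pr}_L(L_i) \neq \emptyset$ whenever neither is a single "gate" point separated from the other; one checks that in all cases $\mathrm{pr}_L(L_{i-1})$ and $\mathrm{pr}_L(L_i)$ overlap or are adjacent, forcing $\bigcup_i \mathrm{pr}_L(L_i)$ to be an interval $J \subseteq L$. Now $L_1 \cap L \neq \emptyset$ means $\mathrm{pr}_L(L_1)$ is a nondegenerate subarc (or contains an arc) of $L$ with $L_1 \supseteq \mathrm{pr}_L(L_1)$, i.e. $L_1$ "runs along" $L$ there; similarly for $L_n$. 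Since $L_2, \ldots, L_{n-1}$ miss $L$, their projections are single points, so the interval $J$ is pinned at both ends by the genuine overlaps $L_1 \cap L$ and $L_n \cap L$, and $J$ is covered by the chain. From this I would extract that $L_1$ and $L_n$ must meet: travelling along $J$ from the $L_1$-overlap to the $L_n$-overlap through the intermediate projection points, the lines $L_1, L_2, \ldots, L_n$ form a connected subtree of $\mathcal T$ (their union is connected because consecutive ones meet), and this connected subtree contains points of $L$ at both "ends" $J$; tracing through, $L_1 \cup \cdots \cup L_n$ is a subtree containing a segment of $L$ from the $L_1$-contact to the $L_n$-contact, and one shows the bridge from $L_1$ to $L_n$ within this subtree is degenerate, giving $L_1 \cap L_n \neq \emptyset$.

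The cleanest way to organize this is probably by induction on $n$, or by a direct convexity argument: the set of $k\Lambda_\alpha$ (for $k$ in the relevant subgroup) meeting a fixed segment of $L$ is "convex" in the tree-chain sense. The main obstacle, and the place requiring genuine care, is the case analysis at the endpoints — when $\mathrm{pr}_L(L_1)$ or $\mathrm{pr}_L(L_n)$ degenerates (the contact of $L_1$ with $L$ is a single point rather than an arc), and more importantly handling the possibility that some of the lines $L_i$ coincide, or that the arc $J$ degenerates to a point (all contacts happening at one vertex of $\mathcal T$). In the degenerate-$J$ case one must argue directly that all of $L_1, \ldots, L_n, L$ pass through a common point, which follows from the tree property: a chain of lines each meeting the next, all projecting to the same point $v \in L$, must all contain $v$. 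I would isolate this as a short separate sublemma about trees ("a chain of subtrees, consecutive ones intersecting, all meeting a common line $L$ only at projections inside an interval $J$, yields $L_1 \cap L_n \neq \emptyset$"), prove it purely combinatorially, and then apply it with $L = g\Lambda_\alpha$ and $L_i = h_i\Lambda_\alpha$ inside $\mathcal T = \langle g, h_1, \ldots, h_n\rangle \Lambda_\alpha$. No orientation hypotheses are needed here — this lemma is purely about the incidence pattern of translates of $\Lambda_\alpha$, which is why it is stated before the $\alpha$-oriented machinery is brought to bear.
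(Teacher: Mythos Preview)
Your translation into lines $L = g\Lambda_\alpha$ and $L_i = h_i\Lambda_\alpha$ inside a tree is exactly right, but the nearest-point-projection machinery and the case analysis you anticipate are unnecessary, and your sketch never actually closes. The sentence ``$L_1 \cup \cdots \cup L_n$ is a subtree containing a segment of $L$ from the $L_1$-contact to the $L_n$-contact'' is not true as stated (that is precisely what the hypothesis on $L_2,\ldots,L_{n-1}$ forbids), and ``one shows the bridge from $L_1$ to $L_n$ within this subtree is degenerate'' is the conclusion, not an argument.

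The paper's proof is a direct two-line tree argument with no projections and no cases. Because consecutive $h_i\Lambda_\alpha$ intersect and the intermediate ones miss $g\Lambda_\alpha$, one can run a curve $\beta$ inside $h_1\Lambda_\alpha \cup \cdots \cup h_n\Lambda_\alpha$ from a point of $h_1\Lambda_\alpha \cap g\Lambda_\alpha$ to a point of $h_n\Lambda_\alpha \cap g\Lambda_\alpha$ that meets $g\Lambda_\alpha$ only at its endpoints. Since $g\Lambda_\alpha \cup h_1\Lambda_\alpha \cup \cdots \cup h_n\Lambda_\alpha$ has no nontrivial loops, $\beta(0) = \beta(1)$, and this common point lies in $h_1\Lambda_\alpha \cap h_n\Lambda_\alpha$. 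An equivalent one-liner: $S = \bigcup_i L_i$ is a connected subtree, so $S \cap L$ is connected; but $S \cap L = (L_1 \cap L) \cup (L_n \cap L)$, two nonempty arcs on the line $L$, hence they share a point. Your projection outline can in fact be completed to reach the same conclusion (consecutive projections share a point, so the gate points satisfy $p_2 = \cdots = p_{n-1}$ and this point lies in both $L_1 \cap L$ and $L_n \cap L$), but it is a longer route to the same place.
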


\begin{proof}
Interpreting the hypotheses in terms of intersections of translates of $\Lambda_{\alpha}$, we can find a curve $\beta\subset h_1\Lambda_{\alpha}\cup\cdots\cup h_n\Lambda_{\alpha}$ that only meets $g\Lambda_{\alpha}$ at its endpoints, with $\beta(0)\in h_1\Lambda_{\alpha}\cap g\Lambda_{\alpha}$ and $\beta(1)\in h_n\Lambda_{\alpha}\cap g\Lambda_{\alpha}$. Since $g\Lambda_{\alpha}\cup h_1\Lambda_{\alpha}\cup\cdots\cup h_n\Lambda_{\alpha}$ has no non-trivial loops, we must have $\beta(0)=\beta(1)$ (and $\beta$ must be a trivial loop), which provides a point in $h_1\Lambda_{\alpha}\cap h_n\Lambda_{\alpha}$ as desired.
\end{proof}

%\begin{remark} Note that if $G$ is $\alpha$-oriented we can orient teh union of  $g\Lambda_{\alpha}$, $g\in G$,  according to the orientation of $\Lambda_{\alpha}$ so that $g$ preserves orientation.  If a lift is contained in one $g\Lambda_{\alpha}$ we say that is positively oriented if it has the orientation of $g\Lambda_{\alpha}$.  \end{remark}

The following is a straightforward observation. 

\begin{lemma}\label{fundamental} 
If $g, h\in  T^+(\Lambda_{\alpha})$ and  $g^{-1}h\in  T(\Lambda_{\alpha})$, then  $g^{-1}h\in T^+(\Lambda_{\alpha})$.

\end{lemma}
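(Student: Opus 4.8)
\textbf{Proof plan for Lemma \ref{fundamental}.}

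The plan is to unwind the definitions of $T^+$ and $T^-$ geometrically, using the picture of translates of $\Lambda_\alpha$ inside $\mathcal E_x(M)$, and then argue that if $g^{-1}h$ were orientation \emph{reversing}, the orientations carried onto $g\Lambda_\alpha$ and $h\Lambda_\alpha$ by $g$ and $h$ could not both be compatible with the standard orientation of $\Lambda_\alpha$. Concretely: as noted in the discussion preceding Lemma \ref{convexity}, the hypothesis $g^{-1}h\in T(\Lambda_\alpha)$ means $g\Lambda_\alpha\cap h\Lambda_\alpha\neq\emptyset$; each of $g$ and $h$ carries the standard orientation of $\Lambda_\alpha$ to an orientation of $g\Lambda_\alpha$, resp.\ $h\Lambda_\alpha$; and $g^{-1}h\in T^+(\Lambda_\alpha)$ is exactly the statement that these two orientations agree on the (non-empty) intersection arc $g\Lambda_\alpha\cap h\Lambda_\alpha$.

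First I would reduce to understanding a single translate. Since $g\in T^+(\Lambda_\alpha)$, the orientation that $g$ induces on $g\Lambda_\alpha$ restricts, on the arc $g\Lambda_\alpha\cap \Lambda_\alpha = b_g(I)$ (or on $a_g(I)$, depending on which we track), to the standard orientation of $\Lambda_\alpha$ — this is just the definition of $g$ preserving orientation, together with the relation $ga_g = b_g$. Likewise $h\in T^+(\Lambda_\alpha)$ says the $h$-induced orientation on $h\Lambda_\alpha$ agrees with the standard one on $h\Lambda_\alpha\cap\Lambda_\alpha$. The key point is then local: on the sub-arc $g\Lambda_\alpha\cap h\Lambda_\alpha$ — which, by the form of intersections of translates of $\Lambda_\alpha$, is a sub-arc of a translate of $\Lambda_\alpha$ and hence carries a well-defined standard orientation — both the $g$-induced and the $h$-induced orientations agree with that standard orientation (because $g\Lambda_\alpha\cap h\Lambda_\alpha$ is contained in a component of $G_{g,h}\Lambda_\alpha$ on which all three translates $\Lambda_\alpha$, $g\Lambda_\alpha$, $h\Lambda_\alpha$ overlap consistently, using that there are no non-trivial horizontal loops so these arcs are nested compatibly). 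Therefore the $g$- and $h$-induced orientations agree with each other on $g\Lambda_\alpha\cap h\Lambda_\alpha$, which is precisely $g^{-1}h\in T^+(\Lambda_\alpha)$.

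The one subtlety to handle carefully — and the place where a sloppy argument could go wrong — is that $g\Lambda_\alpha\cap h\Lambda_\alpha$ need not meet $\Lambda_\alpha$ itself, so one cannot directly compare the two induced orientations \emph{via} $\Lambda_\alpha$; the comparison must be transported along arcs of the tree $G_{g,h}\Lambda_\alpha$, and one must check that translating the standard orientation along these arcs is unambiguous. This is exactly the kind of statement that Lemma \ref{convexity} and the absence of non-trivial loops are designed to control: any two arcs connecting a point of $g\Lambda_\alpha\cap h\Lambda_\alpha$ back to $\Lambda_\alpha$ inside the tree coincide, so the transported orientation is well-defined. Alternatively, and perhaps more cleanly, I would phrase the whole argument in terms of the partial order ``agrees with the standard orientation'': being in $T^+$ is a transitive relation on translates that pairwise intersect (this is essentially the content of the ``straightforward observation''), so $g,h\in T^+$ — meaning $g\Lambda_\alpha$ and $h\Lambda_\alpha$ are each positively oriented relative to $\Lambda_\alpha$ — forces $h$ positively oriented relative to $g$, i.e.\ $g^{-1}h\in T^+(\Lambda_\alpha)$. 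I expect the entire proof to be short; the only real work is setting up the language so that ``positively oriented relative to'' is manifestly transitive on a tree of translates, and that is immediate once one invokes that $G_{g,h}\Lambda_\alpha$ has no non-trivial loops.
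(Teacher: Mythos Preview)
The paper gives no argument, calling the lemma a ``straightforward observation.'' You correctly isolate the crux --- whether the $g$- and $h$-induced orientations on $g\Lambda_\alpha\cap h\Lambda_\alpha$ must agree --- and you correctly flag the dangerous case where that arc is not contained in $\Lambda_\alpha$. But your proposed resolution via ``transporting orientation along the tree'' does not work, and in fact the lemma as stated appears to be \emph{false}.

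Here is a counterexample in $\mathcal L_x(C)\cong F_3=\langle a,b,u\rangle$ with $C$ a wedge of three circles. Take $\alpha = abaub^{-1}u^{-1}b$ and set $g=b^{-1}a^{-1}$, $h=aubu^{-1}a^{-1}b^{-1}a^{-1}$. Direct computation in the Cayley tree gives $b_g(I)=[b^{-1},a]$ and $b_h(I)=[a,aba]$ along $\Lambda_\alpha$, with $a_g$ and $a_h$ both positively oriented, so $g,h\in T^+(\Lambda_\alpha)$. The arcs $b_g(I)$ and $b_h(I)$ meet only at the vertex $a$; there $g\Lambda_\alpha$ leaves $\Lambda_\alpha$ along the edge $[a,au]$ in its positive $g$-direction, while $h\Lambda_\alpha$ \emph{arrives} at $\Lambda_\alpha$ along that same edge, so $g\Lambda_\alpha\cap h\Lambda_\alpha=[a,au]$ carries opposite induced orientations. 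Equivalently $b_{g^{-1}h}(I)=[aba,abau]$ while $a_{g^{-1}h}$ runs from $abaub^{-1}u^{-1}$ to $abaub^{-1}$, the negative direction on $\Lambda_\alpha$; hence $g^{-1}h\in T^-(\Lambda_\alpha)$ strictly. The failure in your sketch is exactly at the step you worried about: when $\Lambda_\alpha\cap g\Lambda_\alpha\cap h\Lambda_\alpha$ is a single point, carrying the standard orientation out to the off-$\Lambda_\alpha$ edge via $g\Lambda_\alpha$ versus via $h\Lambda_\alpha$ can give opposite answers, and the absence of loops in the tree does nothing to prevent this. The same example also contradicts Lemma~\ref{oriented} as stated, since $\langle g,h,[\alpha]\rangle$ is generated by elements of $T^+$ yet is not $\alpha$-oriented; so the paper itself has a genuine gap at this point.
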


Next we present the main result of this subsection, concerning the subgroups generated by orientation preserving elements.

\begin{lemma}\label{oriented}
Let $G\subset\mathcal L_x(M)$ be a finitely generated subgroup whose generators belong to  $T^+(\Lambda_{\alpha})$. Then $G$ is $\alpha$-oriented. 
\end{lemma}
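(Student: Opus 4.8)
The goal is to show that if $G$ is generated by finitely many elements $g_1,\ldots,g_k\in T^+(\Lambda_{\alpha})$, then for any $g,h\in G$ with $g^{-1}h\in T(\Lambda_{\alpha})$ we have $g^{-1}h\in T^+(\Lambda_{\alpha})$. Since $\alpha$-orientedness is a statement about all pairs $g,h$, and since $g^{-1}h\in T(\Lambda_{\alpha})$ iff $g\Lambda_{\alpha}\cap h\Lambda_{\alpha}\neq\emptyset$, the natural framework is the simplicial tree $\mathcal T=G\Lambda_{\alpha}$ together with its ``abstract'' structure: take the graph whose vertices are the translates $g\Lambda_{\alpha}$, $g\in G$, with an edge between $g\Lambda_{\alpha}$ and $h\Lambda_{\alpha}$ whenever they intersect (equivalently $g^{-1}h\in T(\Lambda_{\alpha})$). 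By the remark preceding Lemma \ref{convexity}, the connected components of $G\Lambda_{\alpha}$ are simplicial trees; since the generators lie in $T(\Lambda_{\alpha})$, the whole of $G\Lambda_{\alpha}$ is connected, hence a tree, and the auxiliary graph just described is itself a tree (no non-trivial loops among the translates). I would first make this reduction precise.

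\textbf{Main argument.} The plan is to induct on the word length of $g^{-1}h$ in the generators $g_1^{\pm 1},\ldots,g_k^{\pm 1}$; by $G$-invariance of everything in sight (left translation by $g^{-1}$ sends the pair $(g,h)$ to $(e,g^{-1}h)$ and preserves $T(\Lambda_{\alpha})$, $T^+(\Lambda_{\alpha})$) it suffices to show $w\in T^+(\Lambda_{\alpha})$ for every $w\in G$ with $w\in T(\Lambda_{\alpha})$, by induction on the length of $w$. Length $1$ is the hypothesis on the generators (together with Remark \ref{inv T} for inverses). For the inductive step, write $w=w'g_i^{\pm 1}$ with $w'$ shorter. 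We want to connect the endpoints via a chain to which Lemma \ref{fundamental} applies. The tree structure is what makes this work: consider the translates $\Lambda_{\alpha}$, $w\Lambda_{\alpha}$, and look at the geodesic (in the tree $\mathcal T$) between them. Each consecutive pair of translates along this geodesic differs by a single generator (after the reduction above, a generator-by-generator path from $e$ to $w$ in $G$ gives a path of translates, and in a tree the reduced geodesic between $\Lambda_{\alpha}$ and $w\Lambda_{\alpha}$ is a subsequence of any such path — more carefully, collapse backtracking using Lemma \ref{convexity}). Thus $\Lambda_{\alpha}=k_0\Lambda_{\alpha},k_1\Lambda_{\alpha},\ldots,k_r\Lambda_{\alpha}=w\Lambda_{\alpha}$ with $k_{j-1}^{-1}k_j$ equal to a single generator, hence in $T^+(\Lambda_{\alpha})$. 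Now applying Lemma \ref{fundamental} repeatedly along the chain $k_0,k_1,\ldots,k_r$ — at each step $k_{j-1}$ and $k_j$ are in $T^+(\Lambda_{\alpha})$ (as elements carrying the correct orientation, by $G$-invariance of the orientation built in the remark after the definition of $\alpha$-oriented) and consecutive, so their quotient preserves orientation — propagates orientation-preservation and yields $k_0^{-1}k_r=w\in T^+(\Lambda_{\alpha})$.

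\textbf{The delicate point.} The step I expect to be the main obstacle is the passage from ``$w$ has a generator-by-generator expression'' to ``there is a chain of translates from $\Lambda_{\alpha}$ to $w\Lambda_{\alpha}$ in which consecutive translates intersect, each consecutive quotient is a generator, \emph{and} Lemma \ref{fundamental} can be chained along it.'' The subtlety is that a word $w=g_{i_1}^{\epsilon_1}\cdots g_{i_n}^{\epsilon_n}$ gives a \emph{walk} $e, g_{i_1}^{\epsilon_1}, g_{i_1}^{\epsilon_1}g_{i_2}^{\epsilon_2},\ldots, w$ in $G$, but consecutive vertices of this walk need not have their $\Lambda_{\alpha}$-translates intersecting unless we know each generator is in $T(\Lambda_{\alpha})$ — which we do — so consecutive translates do intersect; the real work is that this walk need not be the geodesic in the tree $\mathcal T$, and it may backtrack. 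Here is exactly where Lemma \ref{convexity} is used: it guarantees that if the walk of translates leaves and returns to the ``side'' of some $g\Lambda_{\alpha}$, the first and last translates on the excursion still intersect, so one can straighten the walk to a geodesic without losing the property that consecutive translates intersect. I would therefore first prove, as a lemma, that \emph{any} walk $\Lambda_{\alpha}=L_0,\ldots,L_n=w\Lambda_{\alpha}$ of $G$-translates with $L_{j-1}\cap L_j\neq\emptyset$ can be reduced to an embedded path (the tree geodesic) with the same endpoints and the same property, using Lemma \ref{convexity} to handle backtracks; and then establish orientation-propagation along an embedded path by induction using Lemma \ref{fundamental}, finishing the proof. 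The orientation bookkeeping — keeping track that each $k_j\Lambda_{\alpha}$ inherits the ``standard'' orientation compatibly, so that Lemma \ref{fundamental} genuinely applies at each step — is the other place that needs care, but it is exactly the content of the $G$-invariant orientation in the remark after the definition of $\alpha$-oriented, once one observes that along the path the orientations were consistently propagated.
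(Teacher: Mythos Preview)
Your plan has the right ingredients---Lemma \ref{convexity} to shorten the walk and Lemma \ref{fundamental} to propagate orientation---and these are exactly the tools the paper uses. But the chaining step as you describe it has a genuine gap. You assert that along the walk $k_0,\ldots,k_r$ ``at each step $k_{j-1}$ and $k_j$ are in $T^+(\Lambda_{\alpha})$,'' and you justify this by appealing to the $G$-invariant orientation from the remark after the definition of $\alpha$-oriented. That remark, however, \emph{assumes} $G$ is $\alpha$-oriented, which is precisely what you are trying to prove; the argument is circular. The actual obstacle is that an intermediate $k_j$ need not lie in $T(\Lambda_{\alpha})$ at all (its translate may miss $\Lambda_{\alpha}$), so the hypotheses of Lemma \ref{fundamental} are not available. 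After you apply Lemma \ref{convexity} to skip such $k_j$, the surviving consecutive quotients are no longer single generators, so you have lost the only reason you had for them to lie in $T^+(\Lambda_{\alpha})$.

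Your word-length induction can in fact be salvaged: since $k_1$ is a generator it always lies in $T(\Lambda_{\alpha})$, so after applying Lemma \ref{convexity} with $g=e$ the extracted subsequence has at least one intermediate term $k_{j_{m-1}}$ of length strictly between $0$ and $n$; then both $k_{j_{m-1}}$ and $k_{j_{m-1}}^{-1}w$ have word length $<n$ and lie in $T(\Lambda_{\alpha})$, so the induction hypothesis puts both in $T^+(\Lambda_{\alpha})$, and a single application of Lemma \ref{fundamental} finishes. You should make this explicit rather than invoking the circular orientation remark.

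The paper sidesteps the issue by a different induction: instead of word length, it exhausts $G$ by finite subsets $G_0\subset G_1\subset\cdots$ adding one element at a time, with the inductive hypothesis that the $\alpha$-oriented property already holds for \emph{all pairs} in $G_i$. When adjoining $h_{i+1}$, one builds a chain $k_1,\ldots,k_n$ inside $G_i$ from a known neighbour $h'_i$ to the target $h$, applies Lemma \ref{convexity} with $g=h_{i+1}$ (not $g=e$) to ensure every $h_{i+1}^{-1}k_j\in T(\Lambda_{\alpha})$, and then the inductive hypothesis supplies $k_j^{-1}k_{j+1}\in T^+(\Lambda_{\alpha})$ directly---no circularity, because these elements live in $G_i$. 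This stronger inductive hypothesis is what makes the chaining via Lemma \ref{fundamental} go through cleanly.
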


\begin{proof}

By hypothesis we can write
\[G=\bigcup_i G_i, \mbox{ where } G_0=\{id_x\}, \mbox{ and } G_{i+1}=G_i\cup\{ h_{i+1}\}, \]
such that there is $h'_i\in G_i$ with $h_{i+1}^{-1}h'_i\in  T^+(\Lambda_{\alpha})$ for every $i\geq 0$. We are going to show the lemma by induction on $i$: assuming that $g^{-1}h\in T(\Lambda_{\alpha})$ implies $g^{-1}h\in T^+(\Lambda_{\alpha})$ for $g, h\in G_i$, we shall show that this same property holds for $g,h\in G_{i+1}$. The base case of this induction is trivial.  

By Remark \ref{inv T}, we only need to consider the case when $g=h_{i+1}$ and $h\in G_i$. So let $h\in G_i$ be such that $h^{-1}_{i+1}h\in T(\Lambda_{\alpha})$, and we are going  to show that $h^{-1}_{i+1}h\in T^+(\Lambda_{\alpha})$. By construction of the set $G_i$  there is a sequence $h'_i=k_1,\ldots,k_n=h$ in $G_i$ such that $k_j^{-1}k_{j+1}\in T^+(\Lambda_{\alpha})$ for $j=1,\ldots,n-1$. Since $k_j\in G_i$ for every $j$, the induction hypothesis gives us that $k_j^{-1}k_l\in T^+(\Lambda_{\alpha})$ whenever 
$k_j^{-1}k_l\in T(\Lambda_{\alpha})$. 

By lemma \ref{convexity} we can assume, maybe after taking a subsequence,  that 
$h^{-1}_{i+1}k_j\in T(\Lambda_{\alpha})$ for every $j$.  Now write
\[ h^{-1}_{i+1}k_2= (h^{-1}_{i+1}k_1) (k^{-1}_{1}k_2)  \]
and note that  $h^{-1}_{i+1}k_1$   and    $k^{-1}_{1}k_2$ are  orientation preserving by construction. Therefore $h^{-1}_{i+1}k_2$ is orientation preserving by 
Lemma \ref{fundamental}.  Proceeding inductively we conclude that $h^{-1}_{i+1}k_n$ is also orientation preserving, as desired.

\end{proof}

\section{Formulas for the terms of the bracket}

In this section we study the dot products between a string and its inverse  applying what we developed in section \ref{s.lift int}. Again we fix a cyclically reduced, non-trivial closed curve $\alpha$, and let $x=\alpha(0)$ and $X\in S(M)$ be the conjugacy class of $\alpha$. 

\subsection{Expressions for the dot product}

Let $g\in T_1(\alpha)$ and recall the horizontal curves $(a_g,b_g)$ defined in subsection \ref{ss.lift int1}. We introduce the following curves:

\begin{itemize}
\item $\tilde\alpha_g$ is the segment of $\Lambda_{\alpha}$ starting at $b_g(1)$ and ending at $[\alpha]b_g(1)$. Let $\alpha_g=\pi\circ\tilde\alpha_g$.

\item $\tilde\beta_g$ is the segment of $g\Lambda_{\alpha}$ starting at $b_g(1)$ and ending at $g[\alpha^{-1}]a_g(1)$. Let $\beta_g=\pi\circ\tilde\beta_g$.

\item $\tilde\gamma_g$ is the segment of $\Lambda_{\alpha}$ starting at $id_x$ and ending at $b_g(1)$. Let $\gamma_g=\pi\circ\tilde\gamma_g$.
  
\end{itemize}

We observe $\alpha_g$ and $\beta_g$ are permutations of $\alpha$ and $\alpha^{-1}$ respectively, and that $\tilde\alpha_g$ and $\tilde\beta_g$ are their respective horizontal lifts starting at $b_g(1)$. This is easy to see for $\alpha_g$, and in the case of $\beta_g$ note that $\tilde\beta_g=g\tilde\beta'$ where $\tilde\beta'$ is the segment of $\Lambda_{\alpha}$ starting at $a_g(1)$ and ending at $[\alpha^{-1}]a_g(1)$. According to Definition \ref{dotproduct} (which also makes sense for intersection pairs), the conjugacy class of $\alpha_g\beta_g$ is the dot product $(X\cdot_{(P,Q)}X^{-1})$ where $(P,Q)$ is the intersection pair corresponding to $g$.

On the other hand, $\gamma_g$ is the curve that gives the change of basepoint conjugation so that $[\gamma_g\alpha_g\gamma_g^{-1}]$ and $[\gamma_g\beta_g\gamma_g^{-1}]$ belong to $\mathcal L_x(M)$. We also have that $\tilde\gamma_g$ is the horizontal lift of $\gamma$ at $id_x$. For an example of these curves, in figure \ref{fig:2} we have $\alpha_g=bca$, $\beta_g=a^{-1}c^{-1}e^{-1}f^{-1}$ and $\gamma_g=a$.

\begin{remark} \label{rm:inv a b} By Remark \ref{inverse} we have

\begin{itemize}
\item If $g$ preserves orientation, then $\alpha_{g^{-1}} = \beta_g^{-1} \mbox{ and } \beta_{g^{-1}} = \alpha_g^{-1}$. 

\item If $g$ reverses orientation, $\alpha_{g^{-1}}$ and $\beta_{g^{-1}}$ are the respective reductions of $t_g\beta_g^{-1}t_g^{-1}$ and $t_g\alpha_g^{-1}t_g^{-1}$. 

\end{itemize}
(Note that in the first case $t_{g^{-1}}=t_g$, while in the second case $t_{g^{-1}}=t_g^{-1}$).

\end{remark}

This gives a relationship between the dot products between $X$ and $X^{-1}$ associated to $g$ and $g^{-1}$. 

\begin{remark} \label{dot product of inverses} Let $(P,Q)\in \mathcal I(\alpha,\alpha^{-1})$ correspond to $g\in T_1(\alpha)$, and denote by $(Q',P')$ the intersection pair corresponding to $g^{-1}$. Then we have $$(X\cdot_{(Q',P')}X^{-1})=(X\cdot_{(P,Q)}X^{-1})^{-1} $$
\end{remark}

In particular, these dot products cannot be equal, as a non-trivial loop is not conjugate to its inverse. Next we define the curves that will help us write reduced forms for the dot products.

\begin{definition} \label{d:c1c2} Let $g\in T_1(\alpha)$. We define the curves $c_1(\alpha,g)$ and $c_2(\alpha,g)$ according to whether $g$ preserves of reverses orientation:

\begin{itemize}
\item If $g$ preserves orientation, $$c_1(\alpha,g) \mbox{ is the reduced form of } \alpha_gt_g^{-1}  \mbox{ and }$$ $$ c_2(\alpha,g) \mbox{ is the reduced form of } t_g\beta_g$$

\item If $g$ reverses orientation, let $$c_1(\alpha,g)= \alpha_g \mbox{ and } c_2(\alpha,g)= \beta_g $$

\end{itemize}

\end{definition}

As an example, in Figure \ref{fig:2} we have $c_1(\alpha, g)=b$ and
$c_2(\alpha, g)=e^{-1}f^{-1}$. 

Let us interpret this definition in terms of the construction of the intersection pair $(P_g,Q_g)$ given in subsection \ref{ss.lift int1}. In the orientation preserving case we had $P_g=(\alpha_1,t_g)$ and $Q_g=(\beta_1,t_g^{-1})$, and recalling the construction we get  $$\alpha_g=\alpha_1 t_g \mbox{ and } \beta_g = t_g^{-1}\beta_1, \mbox{ thus } c_1(\alpha,g)=\alpha_1 \mbox{ and } c_2(\alpha,g)=\beta_1$$
In the orientation reversing case we had $P_g=(\alpha_1,t_g)$ and $Q_g=(\beta_1,t_g)$ and we get $$\alpha_g=\alpha_1 t_g \mbox{ and } \beta_g = \beta_1 t_g, \mbox{ and so } c_1(\alpha,g)=\alpha_1 t_g \mbox{ and } c_2(\alpha,g)=\beta_1 t_g$$

In both cases we have that the concatenation $c_1(\alpha,g)c_2(\alpha,g)$ is the cyclically reduced form of $\alpha_g\beta_g$. Also note that:

\begin{remark}\label{pair2}
If $g\in T_1(\alpha)$ is orientation reversing, then $c_1(\alpha,g)$ is  a permutation of $\alpha$ and $c_2(\alpha,g)$ is a permutation of $\alpha^{-1}$. In particular $l(c_1(\alpha,g))=l(c_2(\alpha,g))=l(\alpha)$.
\end{remark}

\begin{remark}\label{pair3}
On the other hand, if $g\in T_1(\alpha)$ is orientation preserving we have $l(c_1(\alpha,g))=l(c_2(\alpha,g))=l(\alpha)-l(t_g)$. 
\end{remark}

So in any case the lengths of $c_1(\alpha,g)$ and $c_2(\alpha,g)$ agree.
We can also deduce the length of the dot product (i.e. the length of a cyclically reduced form), as follows:

\begin{remark} \label{r:int length}

For $g\in T_1(\alpha)$ we have: 

\begin{itemize}
\item ${l(c_1(\alpha,g)c_2(\alpha,g))= 2l(\alpha)-2l(t_g)}$ if $g$ preserves orientation, and

\item $l(c_1(\alpha,g)c_2(\alpha,g))= 2l(\alpha)$ if $g$ reverses
orientation.

\end{itemize} 

\end{remark}

Recalling the relationship between orientation and the type of intersection pairs from subsection \ref{ss.lift int1}, Remark \ref{r:int length} implies that if  $c_1(\alpha,g)c_2(\alpha,g)$ is a permutation of $c_1(\alpha,h)c_2(\alpha,h)$ for $g,h\in T_1(\alpha)$ then either:

\begin{itemize}
\item $(P_g,Q_g)$ and $(P_h,Q_h)$ are both of types (2) or (1) (i.e. $g$ and $h$ are orientation reversing), or
\item $(P_g,Q_g)$ and $(P_h,Q_h)$ are both of type (3) (i.e. $g$ and $h$ are strictly orientation preserving), and $l(t_g)=l(t_h)$.
\end{itemize}

This observation is an example of recovering information about the intersection pair from the corresponding dot product. In the following sections we will be proving stronger results within this same idea, which will ultimately lead us to Theorem \ref{simple} by showing there can be no cancellations in the formula for $[X,X^{-1}]$. 

Next we record the behaviour of the curves from Definition \ref{d:c1c2} under taking inverses in $T_1(\alpha)$, which we can compute from Remark \ref{rm:inv a b}.

\begin{remark} \label{rm:inv c1c2} Let $g\in T_1(\alpha)$, then
\begin{itemize}
\item if $g$ preserves orientation, $$c_1(\alpha,g^{-1})=c_2(\alpha,g)^{-1}  \mbox{ and } c_2(\alpha,g^{-1})=  c_1(\alpha,g)^{-1}$$

\item if $g$ reverses orientation, 
$$c_1(\alpha,g^{-1}) \mbox{ is the reduced form of } t_g  c_2(\alpha,g)^{-1} t_g^{-1} \mbox{ and }$$ $$ c_2(\alpha,g^{-1}) \mbox{ is the reduced form of } t_g  c_1(\alpha,g)^{-1} t_g^{-1}$$
\end{itemize}

\end{remark}

The next result lets us write the dot products as commutators in $\mathcal L_x(M)$.

\begin{lemma}\label{formula2}
 Let $g\in T_1(\alpha)$, and put $c_i=c_i(\alpha,g)$ for $i=1,2$. Then we have
\begin{enumerate}
\item
$[\alpha]g[\alpha]^{-1}g^{-1}= [\gamma_g c_1 c_2 \gamma_g^{-1}] $, and 
\item 
if $(P,Q)\in \mathcal I(\alpha,\alpha^{-1})$ corresponds to $g$ by the bijection of Lemma \ref{bijection}, then 
$(X \cdot_{(P,Q)}X^{-1})$ is the conjugacy class of $[\alpha]g[\alpha]^{-1}g^{-1}$.
\end{enumerate}
\end{lemma}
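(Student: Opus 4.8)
The plan is to work directly with the horizontal curves $\tilde\alpha_g$, $\tilde\beta_g$, $\tilde\gamma_g$ introduced above and compute the element of $\mathcal L_x(M)$ that the various concatenations represent. For part (1), I would start from the observation that $\tilde\gamma_g$ is the horizontal lift of $\gamma_g$ at $id_x$, ending at $b_g(1)$, and that $\tilde\alpha_g$ is the horizontal lift of $\alpha_g$ at $b_g(1)$, ending at $[\alpha]b_g(1)$. So the concatenation $\tilde\gamma_g\tilde\alpha_g$ is the horizontal lift of $\gamma_g\alpha_g$ at $id_x$, and it ends at $[\alpha]b_g(1)$; hence $[\gamma_g\alpha_g\gamma_g^{-1}]\in\mathcal L_x(M)$ is exactly $[\alpha]$ times (the element represented by the horizontal lift of $\gamma_g$ ending at $b_g(1)$), i.e. it equals $[\alpha]$ after conjugating by $[\gamma_g]$. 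More precisely: since $\tilde\gamma_g$ goes from $id_x$ to $b_g(1)$, we have $b_g(1)=[\gamma_g']\cdot(\text{something})$; the cleanest route is to note $[\gamma_g\alpha_g\gamma_g^{-1}]=[\alpha]$ as elements of $\mathcal L_x(M)$, because $\tilde\alpha_g$ is a translate of $\tilde\alpha$ by the deck action, and the change-of-basepoint conjugation by $\gamma_g$ (Remark~\ref{bpoint}) carries the loop $\alpha$ at its shifted basepoint back to $[\alpha]$ at $x$.

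The key identity to establish is that $[\gamma_g\beta_g\gamma_g^{-1}] = [\alpha]g^{-1}[\alpha]^{-1}$ in $\mathcal L_x(M)$; equivalently $[\gamma_g\alpha_g\beta_g\gamma_g^{-1}] = [\alpha]g^{-1}$, which, since $c_1c_2$ is the cyclically reduced form of $\alpha_g\beta_g$ and reduction doesn't change the class in $\mathcal E(M)$, gives $[\gamma_g c_1 c_2 \gamma_g^{-1}] = [\alpha]g^{-1}$. I would get the $\beta_g$ factor by tracing $\tilde\beta_g$: it is the segment of $g\Lambda_\alpha$ from $b_g(1)$ to $g[\alpha^{-1}]a_g(1)$, and the remark just before states $\tilde\beta_g = g\tilde\beta'$ with $\tilde\beta'\subset\Lambda_\alpha$ from $a_g(1)$ to $[\alpha^{-1}]a_g(1)$. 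So lifting $\gamma_g\alpha_g\beta_g$ at $id_x$ we land at the point $g[\alpha^{-1}]a_g(1)$; one then identifies $[\alpha^{-1}]a_g(1)$ inside $\Lambda_\alpha$ with $[\gamma_g]^{-1}$-translate data and reads off that the endpoint equals $[\alpha]g^{-1}\cdot([\gamma_g]^{-1}\cdot x)$-type expression, yielding $[\alpha]g^{-1}$ after conjugating back. Wait — the sign/order of $g$ versus $g^{-1}$ needs care: since $ga_g = b_g$ and $\tilde\beta_g$ starts at $b_g(1)=ga_g(1)$, the factor of $g$ appears, and the final endpoint $g[\alpha^{-1}]a_g(1)$ must be rewritten as $[\alpha]\cdot(\text{endpoint of }\tilde\gamma_g)$ composed with $g^{-1}$; this is the step where I'd be most careful about whether we obtain $[\alpha]g[\alpha]^{-1}g^{-1}$ or its inverse, and I expect reconciling it with Remark~\ref{dot product of inverses} (dot products of $g$ and $g^{-1}$ are inverses) to serve as a consistency check.

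Part (2) should then be almost immediate: by Definition~\ref{dotproduct}, $(X\cdot_{(P,Q)}X^{-1})$ is the conjugacy class of $[\alpha_g][\beta_g]$ (these being the representatives of $X$, $X^{-1}$ with parameter-basepoints at the ``end'' of the core curve $t_g$), which equals the conjugacy class of $[c_1 c_2]$; and part (1) says $[\gamma_g c_1 c_2 \gamma_g^{-1}] = [\alpha]g[\alpha]^{-1}g^{-1}$, so $[c_1c_2]$ is conjugate (by $[\gamma_g]$) to $[\alpha]g[\alpha]^{-1}g^{-1}$, hence has the same conjugacy class. I would need to double-check that Definition~\ref{dotproduct}'s choice of representatives $\alpha^n = \alpha_1\eta$, $\beta^m = \beta_1\xi$ (or $\xi\beta_1$ in type (3)) matches, modulo permutation, the curves $\alpha_g = \alpha_1 t_g$ and $\beta_g = t_g^{-1}\beta_1$ (type (3), orientation preserving) or $\beta_g = \beta_1 t_g$ (type (2), orientation reversing) computed after Definition~\ref{d:c1c2}; here powers are not needed by Remark~\ref{rm: no potencias}, so $n=m=1$.

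The main obstacle will be part (1): correctly keeping track of the base-point conjugations and the left-action of $\mathcal L_x(M)$ on $\mathcal E_x(M)$ so that the deck-transformation element one reads off from the endpoints of the concatenated horizontal lifts comes out as the commutator $[\alpha]g[\alpha]^{-1}g^{-1}$ in the right order, rather than, say, $g^{-1}[\alpha]g[\alpha]^{-1}$ or an inverse. Everything else (reduction not changing $\mathcal E(M)$-classes, $c_1c_2$ being the reduced form of $\alpha_g\beta_g$, the permutation-invariance of conjugacy classes) is routine given the earlier setup.
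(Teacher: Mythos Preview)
Your approach for part (2) matches the paper's: both derive it from part (1) together with the fact that $c_1c_2$ is the cyclically reduced form of $\alpha_g\beta_g$, which represents the dot product.

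For part (1), however, you take a genuinely different route. The paper proceeds by explicit case analysis: it writes $\alpha$ as a concrete word in sub-curves (e.g.\ $\alpha=abc=ecaf$ with $g=[c^{-1}e^{-1}]$), splitting into subcases according to whether $id_x$ lies in $a_g$, in $b_g$, in both, or in neither, and then verifies the identity by direct word manipulation in each subcase. Your idea of tracking endpoints of horizontal lifts is more conceptual and, once carried out correctly, handles all cases uniformly without any subdivision. That is a real improvement.

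The one place where your proposal wobbles is the ``key identity'': you write $[\gamma_g\beta_g\gamma_g^{-1}]=[\alpha]g^{-1}[\alpha]^{-1}$, which is not what comes out. The correct computation is this. The lift of $\gamma_g$ at $id_x$ ends at $b_g(1)$; the lift $\tilde\beta_g$ of $\beta_g$ at $b_g(1)$ ends at $g[\alpha^{-1}]a_g(1)$. Since $ga_g=b_g$ we have $a_g(1)=g^{-1}b_g(1)$, so this endpoint equals $g[\alpha]^{-1}g^{-1}\, b_g(1)$. Finally the lift of $\gamma_g^{-1}$ starting there ends at $g[\alpha]^{-1}g^{-1}\cdot id_x$. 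Hence
\[
[\gamma_g\beta_g\gamma_g^{-1}]=g[\alpha]^{-1}g^{-1},
\]
and combining with $[\gamma_g\alpha_g\gamma_g^{-1}]=[\alpha]$ (which you obtain correctly) gives
\[
[\gamma_g c_1 c_2 \gamma_g^{-1}]=[\gamma_g\alpha_g\beta_g\gamma_g^{-1}]=[\alpha]\cdot g[\alpha]^{-1}g^{-1},
\]
exactly the commutator in the statement. So your endpoint-tracking strategy works cleanly once this one formula is corrected; you were right to flag it as the delicate step.
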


\begin{proof} The second point follows from the first and the fact that the cyclically reduced form of $[\gamma_g c_1 c_2 \gamma_g^{-1}]$, which is $c_1c_2$, represents the dot product $(X \cdot_{(P,Q)}X^{-1})$ as discussed previously in this subsection.

We show the first point in the statement for $g$ corresponding to an intersection pair of type (3), as the other cases result from a straightforward adaptation of the same computations. In fact, type (1) can be considered within types either (2) or (3) by allowing constant core curves.

Whithin the case of $(P,Q)$ being of type (3), we distinguish 3 subcases according to whether $id_x$ belongs to both $a_g$ and $b_g$, to only one of them, or to neither of them. First we assume that $id_x$ is in $b_g$ but not in $a_g$. This is the situation shown in Figure \ref{fig:2}. Then we can write $\alpha=abc=ecaf$ with $g=[c^{-1}e^{-1}]$, noting that it corresponds to an intersection pair of type (3). We have $\gamma_g=a$, $c_1=b$ and $c_2=e^{-1}f^{-1}$. Thus we get $[\gamma_g c_1 c_2 \gamma_g^{-1}]=abe^{-1}f^{-1}a^{-1}$. On the other hand we compute
\begin{equation}\label{eq1}
g[\alpha]^{-1}g^{-1}=[(c^{-1}e^{-1}) (f^{-1}a^{-1}c^{-1}e^{-1})(ec)]
\end{equation}
Thus
\[[\alpha]g[\alpha]^{-1}g^{-1}=[(abc)(c^{-1}e^{-1}) (f^{-1}a^{-1}c^{-1}e^{-1})(ec)  ]= [\gamma_gc_1c_2\gamma_g^{-1}] \]
as desired. The situation is symmetrical for $id_x$ in $a_g$ but not in $b_g$.

In the second subcase, when $id_x$ is in both $a_g$ and $b_g$, we have that $g\in b_g(I)$. If $g$ lies before $id_x$ in the orientation of $\Lambda_{\alpha}$ we can write $\alpha= abcc' = c'afc$, where $t_g=cc'a$ and $g=[c'^{-1}]$. Now $\gamma_g=a$, $c_1=b$ and $c_2=f^{-1}$, and we compute \begin{equation}\label{eq2}
g[\alpha]^{-1}g^{-1}=[c'^{-1}(c^{-1}f^{-1}a^{-1}c'^{-1})c'] 
\end{equation}
and 
\[ [\alpha]g[\alpha]^{-1}g^{-1} = [(abcc')c'^{-1}(c^{-1}f^{-1}a^{-1}c'^{-1})c'] = [abf^{-1}a^{-1}],  \]
that is $[\gamma_g c_1c_2\gamma_g^{-1}]$.  When $g$ lies after $id_x$ in $\Lambda_{\alpha}$ we write $\alpha=aa'bc = a'fca$, where $t_g=caa'$ and $g=[a']$, and the computation is similar.

Finally, if $id_x$ is neither in $a_g$ nor $b_g$, we have $\alpha=abc=ebf$ with $t_g=b$ and $g=[ae^{-1}]$. We see that $\gamma_g=ab$, $c_1=ca$ and $c_2=e^{-1}f^{-1}$. On the other hand
\begin{equation}\label{eq3}
g[\alpha]^{-1}g^{-1}=[(ae^{-1}) (f^{-1}b^{-1}e^{-1})(ea^{-1})]
\end{equation}
thus we get
\[ [\alpha]g[\alpha]^{-1}g^{-1}= [ab(ca e^{-1} f^{-1})b^{-1} a^{-1} ]=[\gamma_g c_1c_2\gamma_g^{-1}]  \] 
\end{proof}

\subsection{Conjugate dot products}

Our strategy for Theorem \ref{simple} is to show that for a primitive string $X$ there can be no cancellations in the formula for $[X,X^{-1}]$ given in Definition \ref{d:bracket}. This would imply that $[X,X^{-1}]=0$ only when $LP_2(X,X^{-1})=\emptyset$, provided that $X$ is primitive, thus proving Theorem \ref{simple}. So we need to study what happens if two linked pairs between $X$ and $X^{-1}$ yield the same dot product. Here we shall focus on what we can achieve for general intersection pairs, leaving the discussion of linked pairs and their signs for the next section.

%allows us to recover some information about the linked pair $(P,Q)$ from the dot product $(X\cdot_{(P,Q)}X^{-1})$. In the following sections we will be proving stronger results on this same idea,  ultimately allowing us to show that there are no cancellations in the formula for $[X,X^{-1}]$, thus showing Theorem \ref{simple}.   

Assume that $g, h\in T_1(\alpha)$ are such that $c_1(\alpha,g)c_2(\alpha, g)$ is a permutation (maybe trivial) of the curve $c_1(\alpha,
h)c_2(\alpha,h)$, which is to say that their corresponding intersection pairs yield the same dot product. So we have
\[[c_1(\alpha, h)c_2(\alpha, h)]=[rc_1(\alpha,g)c_2(\alpha, g) r^{-1}] \]
where $r\in\Omega$ is an initial segment of $c_1(\alpha, h)c_2(\alpha, h)$.

 By Remark \ref{r:int length} and the discussion preceeding it, the curves $c_1(\alpha, h)$, $c_2(\alpha, h)$, $c_1(\alpha, g)$ and $c_2(\alpha, g)$ have all the same length. Thus we may assume that $r$ is an initial segment of $c_1(\alpha, h)$ (otherwise we exchange the roles of $g$ and $h$), and find $s,\,t,\,u\in\Omega$ such that
\begin{equation}\label{c1h}
c_1(\alpha, h)=rs \;\;\;\;\; c_2(\alpha, h)=tu
\end{equation}
\begin{equation}\label{c1g}
c_1(\alpha, g)=st \;\;\;\;\; c_2(\alpha, g)=ur
\end{equation}
where  $l(r)=l(t)$ and $l(s)=l(u)$. In particular, $r$ is constant iff $t$ is constant (trivial permutation case), and $s$ is constant iff $u$ is constant. (Note: $t$ is not to be confused with $t_g$ nor $t_h$). Let
\begin{equation}\label{8}
\phi=[\gamma_h r \gamma_g^{-1}]\in\mathcal L_x(M) 
\end{equation}
Then by Lemma \ref{formula2} we have 
\begin{equation} \label{phi conj}
\phi[\alpha]g [\alpha]^{-1}g^{-1}\phi^{-1}= [\alpha]h[\alpha]^{-1}h^{-1}
\end{equation} 
Lemma \ref{formula2} also gives the converse: if $g, h\in T_1(\alpha)$ are so that $[\alpha]g [\alpha]^{-1}g^{-1}$ and $[\alpha]h [\alpha]^{-1}h^{-1}$ are conjugate in $\mathcal L_x(M)$, then $c_1(\alpha,g)c_2(\alpha, g)$ is a permutation of $c_1(\alpha,h)c_2(\alpha,h)$.

\begin{lemma}\label{reversing2} Assume that $\alpha$ is primitive, and that $g, h\in T_1(\alpha)$ are orientation reversing and so that 
$[\alpha]h[\alpha]^{-1}h^{-1}$ and $[\alpha]g[\alpha]^{-1}g^{-1}$
are conjugate. Then 
\[g=h \] in particular
\[ c_i(\alpha,g)= c_i(\alpha,h) \qquad \mbox{ for }i=1,2.\]

\end{lemma}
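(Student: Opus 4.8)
The plan is to translate the conjugacy hypothesis, via Lemma~\ref{formula2}, into a permutation relation between the curves $c_1(\alpha,g)c_2(\alpha,g)$ and $c_1(\alpha,h)c_2(\alpha,h)$, and then exploit that both $g$ and $h$ are orientation reversing to pin down all the pieces. By the discussion preceding this lemma and Remark~\ref{r:int length}, when $g$ and $h$ are orientation reversing the curves $c_1(\alpha,g),c_2(\alpha,g),c_1(\alpha,h),c_2(\alpha,h)$ all have length exactly $l(\alpha)$, and $c_1(\alpha,g)c_2(\alpha,g)$ is a permutation of $c_1(\alpha,h)c_2(\alpha,h)$. So we are in the setup of equations \pref{c1h} and \pref{c1g} with $l(r)=l(t)=l(\alpha)-l(s)=l(\alpha)-l(u)$. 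The first thing I would do is feed in Remark~\ref{pair2}: $c_1(\alpha,g)$ is a permutation of $\alpha$ and $c_2(\alpha,g)$ is a permutation of $\alpha^{-1}$, and likewise for $h$. Since $c_1(\alpha,h)=rs$ and $c_1(\alpha,g)=st$ are both permutations of $\alpha$, both being cyclically reduced of length $l(\alpha)$, this already forces a strong relationship; similarly $c_2(\alpha,h)=tu$ and $c_2(\alpha,g)=ur$ are permutations of $\alpha^{-1}$.

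Next I would use primitivity of $\alpha$. From $rs$ and $st$ being permutations of the same cyclically reduced curve $\alpha$, we get that $ts$ is a permutation of $\alpha$ as well (cyclically permute $st$), hence $rs$ and $ts$ are both permutations of $\alpha$; comparing initial segments of equal length $l(r)=l(t)$ forces $r=t$, \emph{unless} the permutation is trivial, using the cancellation property of length in $\Omega$ and Lemma~\ref{primitive} to rule out the periodic coincidences that non-primitivity would allow. Concretely: if $r\neq t$ then $\alpha$ would admit two distinct cyclic factorizations $rs=ts$ pointing to a period dividing $l(\alpha)$, contradicting primitivity via Lemma~\ref{primitive}. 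So $r=t$, and then from $c_2(\alpha,g)=ur$ and $c_2(\alpha,h)=tu=ru$ being permutations of $\alpha^{-1}$, the same argument (or simply that $ur$ and $ru$ are cyclic permutations of each other, both equal to permutations of $\alpha^{-1}$) together with primitivity of $\alpha^{-1}$ forces $s=u$ as well, after matching lengths $l(s)=l(u)$. Hence $c_1(\alpha,g)=st=su=c_1(\alpha,h)$ — wait, more carefully: $r=t$ and $s=u$ give $c_1(\alpha,h)=rs=ts=st$? only if $r,s$ commute, which they need not. The cleaner route is: $r=t$ and $s=u$ directly give $c_1(\alpha,h)=rs$, $c_1(\alpha,g)=st=rt'$... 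I would instead argue that once $r=t$ and $s=u$ the conjugating element $\phi=[\gamma_h r\gamma_g^{-1}]$ from \pref{8} becomes controllable, and re-examine \pref{phi conj}.

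The cleanest finish, and what I would actually write, is: having reduced to the permutation relation, use Remark~\ref{rm: no potencias} (so $l(t_g),l(t_h)<l(\alpha)$, in fact $<l(\alpha)/2$ for orientation-reversing elements by the Remark after Lemma~\ref{reversing}) and the explicit form $c_1(\alpha,g)=\alpha_g$, $c_2(\alpha,g)=\beta_g$ from Definition~\ref{d:c1c2}. Then $\alpha_g\beta_g$ and $\alpha_h\beta_h$ being cyclic permutations of one another, with $\alpha_g,\alpha_h$ permutations of $\alpha$ and $\beta_g,\beta_h$ permutations of $\alpha^{-1}$, and with the shift amount $l(r)$ constrained — combined with the fact (Lemma~\ref{reversing}) that the core segments $t_g$, $t_h$ cannot overlap their own translates — forces the cyclic factorizations $P_g$ and $P_h$ of $\alpha$ to coincide, and $Q_g$ and $Q_h$ to coincide, hence $(P_g,Q_g)=(P_h,Q_h)$, hence $g=h$ by the injectivity in Lemma~\ref{bijection}. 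The equality $c_i(\alpha,g)=c_i(\alpha,h)$ is then immediate.

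The main obstacle I anticipate is the bookkeeping in showing that the permutation relating $c_1(\alpha,g)c_2(\alpha,g)$ to $c_1(\alpha,h)c_2(\alpha,h)$ must be \emph{trivial} (i.e. $r$ constant), ruling out a genuine cyclic shift. This is exactly where primitivity of $\alpha$ is essential — a non-primitive $\alpha$ would allow a nontrivial shift by a period — and where one must be careful that the shift lands the core segment $t_h$ onto $t_g$ rather than onto some translate, which is controlled by Remark~\ref{rm: no potencias} bounding the core lengths below $l(\alpha)/2$. Handling the interaction between the "$c_1$ is a permutation of $\alpha$, $c_2$ is a permutation of $\alpha^{-1}$" structure and the cyclic shift $r$ crossing the $c_1$–$c_2$ junction is the delicate point; everything else is length arithmetic in $\Omega$ and an appeal to Lemma~\ref{bijection}.
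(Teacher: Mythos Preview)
Your approach is genuinely different from the paper's, and as written it has a real gap at the step you yourself flag.

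The paper does \emph{not} argue combinatorially with the pieces $r,s,t,u$. Instead it passes to the quotient complex $\Lambda_{\alpha}/G_h$ (where $G_h=\langle [\alpha],h\rangle$) and invokes Lemma~\ref{quotient}: in that quotient, $\bar\alpha$ has \emph{unique intersection}, so $T_1(\bar\alpha)=\{h,h^{-1}\}$. One then lifts $c_1(\alpha,g)c_2(\alpha,g)$ to this quotient, checks it stays inside $\Lambda_{\alpha}/G_h$ and is closed, and observes that it must therefore realize the dot product of one of the two intersection pairs of $\bar\alpha$; Remark~\ref{dot product of inverses} rules out $h^{-1}$, forcing $\bar c_i(\alpha,g)=\bar c_i(\alpha,h)$ for $i=1,2$. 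Projecting gives $\alpha_g=\alpha_h$, and primitivity via Lemma~\ref{primitive} then yields $g=h$. The whole point of the quotient is to sidestep exactly the ``shift bookkeeping'' you identify as the obstacle.

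Your direct route tries to deduce $r=t$ from the fact that $rs$ and $st$ (hence $ts$) are both permutations of $\alpha$. But ``$rs$ and $ts$ are permutations of $\alpha$ with $l(r)=l(t)$'' does \emph{not} force $r=t$: distinct cyclic permutations of a primitive $\alpha$ may well have different initial segments of the same length (take $\alpha=abc$ with $a,b,c$ distinct). What you actually need is that the \emph{same} occurrence of $s$ extends in $\Lambda_{\alpha}$ in only one way --- i.e.\ that the two lifts of $s$ coming from $rs$ and $st$ are $[\alpha]$-translates of each other. That does not follow from primitivity alone; a subword $s$ can occur at several non-translate positions in a primitive cyclic word. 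To close this gap one must simultaneously use the $c_2$-constraints (that $tu$ and $ur$ are permutations of $\alpha^{-1}$), together with Lemma~\ref{reversing0} controlling how segments of $\alpha$ and $\alpha^{-1}$ can coexist in $\Lambda_{\alpha}$, to rule out the ``wrong'' occurrences of $s$. You gesture at this (``the shift crossing the $c_1$--$c_2$ junction is the delicate point'') but never carry it out; the argument as written stops at an unjustified claim, and your own ``wait, more carefully\ldots'' confirms you have not found a clean completion. The paper's quotient trick is precisely the missing idea that makes the proof go through without this case analysis.
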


\begin{proof}
Recall that since $g$ and $h$ are orientation reversing we have $c_1(\alpha,g)=\alpha_g$, $c_2(\alpha,g)=\beta_g$, $c_1(\alpha,h)=\alpha_h$ and $c_2(\alpha,h)=\beta_h$, which will be useful through the proof.

Let $\bar \alpha$ be the projection of $\tilde \alpha$ onto $\Lambda_{\alpha}/G_h$  as in Lemma \ref{quotient}. Note that $\mathcal E_x(M)/G_h$ is an intermediate bundle over $M$, and has a notion of horizontal lift, by projecting the one in $\mathcal E_x(M)$  (which is equivariant). Throughout this proof we shall consider horizontal lifts to $\mathcal E_x(M)/G_h$ repeatedly, and refer to them simply as ``lifts''.

Let $\bar \gamma_h$ be the lift of $\gamma_h$ starting at $\bar\gamma_h(0)=\bar \alpha(0)$, which is the same as the projection of $\tilde\gamma_h$, thus it is contained in $\Lambda_{\alpha}/G_h$ and $\bar\gamma_h(1)$ is the projection of $b_h(1)$. Let $\bar c_1(\alpha,h)\bar c_2(\alpha,h)$ be the lift of $c_1(\alpha,h) c_2(\alpha,h)$ beginning at $\bar \gamma_h(1)$. We see that this curve is closed and contained in $\Lambda_{\alpha}/G_h$, by observing that $\bar c_1(\alpha,h)$ and $\bar c_2(\alpha,h)$ are the respective projections of $\tilde\alpha_h\subset \Lambda_{\alpha}$ and $h^{-1}\tilde\beta_h\subset \Lambda_{\alpha}$, thus each one is a closed curve at $\bar\gamma_h(1)$. 

Next we take $\bar r$ the lift of $r$ starting at $\bar\gamma_h(1)$, noting that it is an initial segment of $\bar c_1(\alpha,h)$. Consider $\bar c_1(\alpha,g)\bar c_2(\alpha,g)$ the lift of $c_1(\alpha,g)c_2(\alpha,g)$ that starts at $\bar r(1)$ (recalling that $c_1(\alpha,g)$ begins at $r(1)$).

{\sl Claim 1}: $\bar c_1(\alpha,g)\bar c_2(\alpha,g)$ is closed and contained in $\Lambda_{\alpha}/G_h$.

To show this claim, write $c_1(\alpha,g)c_2(\alpha,g) = stur$, and start by taking the lift $\bar s$ of $s$ starting at $\bar r(1)$. Since $rs = c_1(\alpha,h)$ we have that $\bar r\bar s = \bar c_1(\alpha,h)$, and thus $\bar s\subset \Lambda_{\alpha}/G_h$ and $\bar s(1) = \bar\gamma_h(1)$. We continue lifting $c_1(\alpha,g)c_2(\alpha,g) = stur$ by taking the lift of $tu$ beginning at $\bar s(1) =\bar\gamma_h(1)$, and we notice that this lift agrees with $\bar c_2(\alpha,h)$ since $tu=c_2(\alpha,h)$. In particular it ends at $\bar\gamma_h(1)$, so $\bar r$ is the lift of $r$ that we need to complete the lifting of $c_1(\alpha,g)c_2(\alpha,g) = stur$. Thus we get that $\bar c_1(\alpha,g)\bar c_2(\alpha,g)=\bar s \bar c_2(\alpha,h) \bar r $, which is closed at $\bar r(1)$ and clearly contained in $\Lambda_{\alpha}/G_h$.

{\sl Claim 2:} For $i=1,2$ we have
\begin{equation}\label{firsteq}
\bar c_i(\alpha,g)=\bar c_i(\alpha,h).
\end{equation}

We can consider $\mathcal L_{\bar\alpha(0)}(\Lambda_{\alpha}/G_h)$ as a subgroup of $\mathcal L_x(M)$, since projection induces an injective homomorphism, and then Lemma \ref{quotient} implies that $T_1(\bar\alpha)=\{h,h^{-1}\}$. By construction we have $$ \bar c_i(\alpha,h)= c_i(\bar\alpha,h) \mbox{ for } i=1,2.$$

On the other hand, by Claim 1 we have that $\bar c_1(\alpha,g)$ and $\bar c_2(\alpha,g)$ lie inside $\Lambda_{\alpha}/G_h$, and we recall that they are lifts of $\alpha_g$ and $\beta_g$ respectively, which are permutations of $\alpha$ and $\alpha^{-1}$. Therefore  $\bar c_1(\alpha,g)$ and $\bar c_2(\alpha,g)$ are permutations of $\bar\alpha$ and $\bar\alpha^{-1}$ respectively (in particular they are closed). 
Then we get that $\bar c_1(\alpha,g)\bar c_2(\alpha,g)$ represents the dot product for an intersection pair of $\bar\alpha$, and since it is cyclically reduced we must have  $$\bar c_i(\alpha,g)= c_i(\bar\alpha,h^{\epsilon}) \mbox{ for } i=1,2 \mbox{ and for some } \epsilon=\pm 1 $$

Since we have $\bar c_1(\alpha,h)\bar c_2(\alpha,h)=\bar r \bar c_1(\alpha,g)\bar c_2(\alpha,g) \bar r^{-1}$, we can apply Remark \ref{dot product of inverses} to get that $\epsilon=1$, proving this claim.

To finish the proof of the lemma recall that, since we are in the orientation reversing case, we have  $c_1(\alpha,g)=\alpha_g$ and $c_1(\alpha,h)=\alpha_h$, which are permutations of $\alpha$. Equation \eqref{firsteq} implies, by projecting, that  $\alpha_g=\alpha_h$, and then Lemma \ref{primitive} gives $g=h$, since $\alpha$ is primitive.

\end{proof}

Putting Lemma \ref{reversing2} together with Remark \ref{r:int length}, we see that a dot product of the form $(X\cdot_{(P,Q)}X^{-1})$ with length $2l(\alpha)$ comes from a unique intersection pair $(P,Q)$, which is of type either (1) or (3). 

One would like to remove the orientation reversal condition from the hypothesis of Lemma \ref{reversing2}, for that would give a stronger result than Theorem \ref{simple}, namely that the strings in the terms of the formula for $[X,X^{-1}]$ cannot repeat. Unfortunately this remains open. Next is the result we can get when dropping said orientation condition, which will suffice for our purpose.

\begin{lemma}\label{general1} Assume that $\alpha$ is primitive, and that $g, h\in T_1(\alpha)$ are such that $[\alpha]h[\alpha]^{-1}h^{-1}$ and $[\alpha]g[\alpha]^{-1}g^{-1}$
are conjugate. Then 
\[ c_i(\alpha,g)= c_i(\alpha,h) \qquad \mbox{ for }i=1,2.\]

%\[ c_1(\alpha,g)= c_1(\alpha,h) \]
\end{lemma}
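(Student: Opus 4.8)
The plan is to reduce, using Lemma \ref{formula2} and Remark \ref{r:int length}, to exactly two cases: one is disposed of by citing Lemma \ref{reversing2}, and the other (the orientation preserving case) is handled by a quotient argument modeled on the proof of Lemma \ref{reversing2} but driven by the $\alpha$-oriented subgroup machinery of subsection \ref{alpha_oriented} in place of Lemma \ref{quotient}.

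First I would invoke the converse half of Lemma \ref{formula2}: conjugacy of $[\alpha]g[\alpha]^{-1}g^{-1}$ and $[\alpha]h[\alpha]^{-1}h^{-1}$ in $\mathcal L_x(M)$ is equivalent to $c_1(\alpha,g)c_2(\alpha,g)$ being a (possibly trivial) permutation of $c_1(\alpha,h)c_2(\alpha,h)$. This places us in the setting of equations \eqref{c1h}--\eqref{phi conj}: after possibly exchanging $g$ and $h$ (which is harmless, as the conclusion is symmetric) we have $c_1(\alpha,h)=rs$, $c_2(\alpha,h)=tu$, $c_1(\alpha,g)=st$, $c_2(\alpha,g)=ur$ with $l(r)=l(t)$ and $l(s)=l(u)$, and $\phi=[\gamma_h r\gamma_g^{-1}]$ realizes the conjugacy. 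Since the four curves $c_i(\alpha,g),c_i(\alpha,h)$ all have the same length, Remark \ref{r:int length} and the observation following it force either (i) $g$ and $h$ both orientation reversing, or (ii) $g$ and $h$ both strictly orientation preserving with $l(t_g)=l(t_h)$. Case (i) is precisely Lemma \ref{reversing2}, which even yields $g=h$; so the whole problem lies in case (ii).

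In case (ii) I would run the argument of Lemma \ref{reversing2} almost verbatim. Pass to the intermediate loop bundle $\mathcal E_x(M)/G_h$ with $G_h=\langle[\alpha],h\rangle$ and its projected horizontal lift; let $\bar\alpha$ be the image of $\tilde\alpha$ in $\Lambda_\alpha/G_h$, and define the lifts $\bar\gamma_h$, $\bar r$, $\bar c_i(\alpha,h)$, $\bar c_i(\alpha,g)$ of $\gamma_h$, $r$, $c_i(\alpha,h)$, $c_i(\alpha,g)$ exactly as there. Claims 1 and 2 of that proof use only that $\alpha_g,\alpha_h$ are permutations of $\alpha$, that $\beta_g,\beta_h$ are permutations of $\alpha^{-1}$, the factorizations \eqref{c1h}--\eqref{c1g}, and the conjugacy relation \eqref{phi conj} — together with the identities $c_1(\alpha,g)=\alpha_1$, $c_2(\alpha,g)=\beta_1$ read off from the orientation preserving bullet of Definition \ref{d:c1c2} — so the same bookkeeping shows that $\bar c_1(\alpha,g)\bar c_2(\alpha,g)$ is a closed, cyclically reduced curve lying in $\Lambda_\alpha/G_h$ which represents the dot product of some self-intersection pair of $\bar\alpha$, and that $\bar c_1(\alpha,h)\bar c_2(\alpha,h)=\bar r\,\bar c_1(\alpha,g)\bar c_2(\alpha,g)\,\bar r^{-1}$.

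The one step that does not carry over is the appeal to Lemma \ref{quotient} (resting on Lemma \ref{reversing}), which in the orientation reversing case forces $\bar\alpha$ to have a unique intersection; in case (ii) $\bar\alpha$ may have several. This is where $\alpha$-orientation enters: since $[\alpha]$ and $h$ both lie in $T^+(\Lambda_\alpha)$, Lemma \ref{oriented} shows $G_h$ is $\alpha$-oriented, so $G_h\Lambda_\alpha$ — hence $\Lambda_\alpha/G_h$ — carries a coherent orientation extending that of $\Lambda_\alpha$, and consequently every self-intersection pair of $\bar\alpha$ is of type (3) or (1), with core length $l(t_h)$ by the length formula of Remark \ref{r:int length}. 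Matching the length $2l(\alpha)-2l(t_h)$ and the type of $\bar c_1(\alpha,g)\bar c_2(\alpha,g)$ against these, and excluding the ``inverse'' pair via Remark \ref{dot product of inverses} exactly as at the end of Lemma \ref{reversing2}, one obtains $\bar c_i(\alpha,g)=\bar c_i(\alpha,h)$; projecting to $M$ gives $c_i(\alpha,g)=c_i(\alpha,h)$, which is the claim. (Note that, unlike the orientation reversing case, this argument does not pin down $g=h$, which is consistent with the weaker conclusion of the lemma.) I expect the main obstacle to be exactly this last matching step: without the clean statement $T_1(\bar\alpha)=\{\bar h,\bar h^{-1}\}$ one must show that a self-intersection pair of $\bar\alpha$ realizing the same cyclically reduced dot product (same core length, same type) as the one coming from $h$ must actually coincide with it. The mechanism should be that all the horizontal lifts involved sit coherently inside the oriented tree $G_h\Lambda_\alpha$, so a genuinely different such pair would produce a nontrivial permutation of $\alpha$ equal to $\alpha$, contradicting primitivity through Lemma \ref{primitive}; turning this into a watertight argument is the technical heart of the proof.
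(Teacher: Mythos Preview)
Your reduction to the strictly orientation preserving case and the setup through equations \eqref{c1h}--\eqref{phi conj} are correct and match the paper. But transplanting the quotient argument of Lemma \ref{reversing2} into case (ii) does not work as stated, and the paper proceeds quite differently.

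The gap is where you claim the bookkeeping of Claim 2 carries over. In the reversing case that claim rests on two facts that both fail here. First, that $c_1(\alpha,g)$ and $c_2(\alpha,g)$ are full permutations of $\alpha$ and $\alpha^{-1}$, so that their lifts to $\Lambda_\alpha/G_h$ are permutations of $\bar\alpha$ and $\bar\alpha^{-1}$ and hence visibly a dot-product decomposition for some element of $T_1(\bar\alpha)$; in the preserving case they are proper subcurves of length $l(\alpha)-l(t_g)$ (Remark \ref{pair3}), and there is no reason the break point between $\bar c_1(\alpha,g)$ and $\bar c_2(\alpha,g)$ should land at a branching point of $\Lambda_\alpha/G_h$. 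Second, your assertion that every self-intersection of $\bar\alpha$ has core length $l(t_h)$ is not a consequence of $\alpha$-orientation: different elements of $G_h$ (e.g.\ $h[\alpha]^n$ or $h^2$) may meet $\Lambda_\alpha$ in segments of varying lengths. Without the unique-intersection statement of Lemma \ref{quotient}, the ``matching step'' you flag as an obstacle is in fact the entire content of the lemma, and the primitivity sketch you offer does not close it.

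The paper avoids the quotient entirely and works directly in $\mathcal E_x(M)$. It takes specific horizontal lifts $\bar s,\bar t\subset\Lambda_\alpha$ (inside $\tilde\alpha_g$) and $\tilde r,\tilde s\subset\Lambda_\alpha$, $\tilde t\subset[\alpha]h\Lambda_\alpha$ (inside $\tilde\alpha_h$ and beyond), and observes that $\phi=[\gamma_h r\gamma_g^{-1}]\in T(\Lambda_\alpha)$ with $\phi\bar s=\tilde s$ and $\phi\bar t=\tilde t$. The key dichotomy is on the orientation type of $\phi$. If $\phi\in T^+(\Lambda_\alpha)$ then Lemma \ref{oriented} makes the group $\langle g,h,\phi,[\alpha]\rangle$ (not $G_h$) $\alpha$-oriented; but $\bar t$ is positively oriented in $\Lambda_\alpha$ while $\tilde t=\phi\bar t$ is negatively oriented in $[\alpha]h\Lambda_\alpha$, a contradiction unless $t$ is constant. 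If $s$ is non-constant, the relation $\phi\bar s=\tilde s$ (both positively oriented in $\Lambda_\alpha$) already forces $\phi\in T^+(\Lambda_\alpha)$. The residual case, $s$ constant and $\phi$ strictly orientation reversing, is shown to be void by an intricate argument: one introduces $\psi=h^{-1}\phi g$, checks $\psi\in T^-(\Lambda_\alpha)$, applies Lemma \ref{reversing2} to suitable $[\alpha]$-translates of $\phi$ and $\psi$, and deduces that $\omega=[\alpha]\phi[\alpha]^{-1}\phi^{-1}$ is primitive yet centralised by a nontrivial $\theta\in T(\Lambda_\alpha)$, contradicting freeness of $\mathcal L_x(C)$. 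So the $\alpha$-oriented machinery is indeed central, but it is applied to the conjugating element $\phi$, not through a quotient bundle.
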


\begin{proof}

We shall assume $g$ and $h$ preserve orientation strictly, since Remark \ref{r:int length} implies that the only other possible case is the one covered by Lemma \ref{reversing2}. 

Recall the notation from Equations \eqref{c1h} and \eqref{c1g}, i.e. the curves $r,s,t, u$ and their properties. Notice that we can prove the lemma by showing that $t$, and hence $r$, are constant. 

Firstly we shall define some horizontal lifts in $\mathcal E_x(M)$ that will be useful through the proof: By Equation \eqref{c1g} we can take $\bar s$ the lift of $s$ starting at $\tilde \gamma_g(1)=b_g(1)$, and $\bar t$ the lift of $t$ starting at $\bar s(1)$. Note that $\bar s\bar t$ is the lift of $c_1(\alpha,g)$ that is an initial segment of $\tilde{\alpha}_g$, thus it is contained in $\Lambda_{\alpha}$ and ends at $\bar t(1)=[\alpha]b_g(0)$, recalling Definition \ref{d:c1c2} in the orientation preserving case. We also see that $\bar s$ and $\bar t$ are positively oriented in $\Lambda_{\alpha}$. 

We also define $\tilde r$ as the lift of $r$ starting at $\tilde\gamma_h(1)=b_h(1)$, and $\tilde s$ as the lift of $s$ starting at $\tilde r(1)$, which are well defined by Equation \eqref{c1h}. Again we see that $\tilde r\tilde s$ is the lift of $c_1(\alpha,g)$ which is an initial segment of $\tilde \alpha_h$, and so it is contained in $\Lambda_{\alpha}$ and ends at $\tilde s(1)=[\alpha]b_h(0)$. Also, $\tilde r$ and $\tilde s$ are positively oriented in $\Lambda_{\alpha}$. 

Finally, let $\tilde t$ be the lift of $t$ starting at $\tilde s(1)$, which is well defined since $t(0)=s(1)$ from Equation \eqref{c1g}. We see from Equation \eqref{c1h} that $\tilde t$ is an initial segment of a lift of $c_2(\alpha,h)$, namely the one contained in $[\alpha]\tilde\beta_h$, since it starts at $[\alpha]b_h(0)$. Thus $\tilde t$ is contained in $[\alpha]h\Lambda_{\alpha}$, meeting $\Lambda_{\alpha}$ only at $\tilde t(0)$, and it goes in the negative direction with respect to the orientation of $[\alpha]h\Lambda_{\alpha}$ induced by  $[\alpha]h$. 

Next we recall Equation \eqref{8}, defining $\phi=[\gamma_h r \gamma_g^{-1}]$. Notice that, by the above definitions, we have $\phi \tilde \gamma_g(1) = \tilde r(1)$, in particular $\phi\in T(\Lambda_{\alpha})$. Since the action of $\mathcal L_x(M)$ preserves horizontal lifting, we also get that $\phi \bar s = \tilde s$ and $\phi \bar t= \tilde t$.

{\sl Claim 1}: If $\phi\in T^+(\Lambda_{\alpha})$ then $t$ is constant.

Let $G$ be the subgroup generated by $g,h,\phi$ and $[\alpha]$. Then $G$ is $\alpha$-oriented by Lemma \ref{oriented}, so there is a $G$-invariant orientation on $G\Lambda_{\alpha}$. On the other hand, $\phi \bar t=\tilde t$ where $\bar t$ has positive orientation in $\Lambda_{\alpha}$ but $\tilde t$ has negative orientation in $[\alpha]h\Lambda_{\alpha}$. This is a contradiction unless $t$ is constant.

{\sl Claim 2}: If $s$ is non-constant, then $\phi\in T^+(\Lambda_{\alpha})$.

We have $\phi \bar s=\tilde s$ where both $\bar s$ and $\tilde s$ are contained in $\Lambda_{\alpha}$, thus $\bar s\subset b_{\phi}(I)$ and $\tilde s\subset a_{\phi}(I)$. Recall that both $\bar s$ and $\tilde s$ are positively oriented in $\Lambda_{\alpha}$, thus showing that $\phi$ preserves orientation, if $s$ is non-constant.

Recall that we prove the lemma by showing that $t$ is constant. Thus, in light of these claims, the only case that remains to be considered is when $s$ is constant and $\phi$ reverses orientation strictly. We shall show that this case is void, which makes sense as $s$ and $t$ cannot be both constant in Equation \eqref{c1g}. Thus we assume that $s$ is constant and $\phi$ strictly reverses orientation, aiming to reach a contradiction.

This case is the most complex part of this proof, the key will be to consider the dot product defined by $\phi$. Note that this can be defined even if $\phi$ is not in $T_1(\alpha)$, through Lemma \ref{mn}, and the same is true for the curves $c_1(\alpha,\phi)$ and $c_2(\alpha,\phi)$ from Definition \ref{d:c1c2}. In order to simplify notation we put $c_i(\phi)=c_i(\alpha,\phi)$ for $i=1,2$, and define  $$ \omega = [\alpha] \phi [\alpha]^{-1}\phi^{-1} = [\gamma_{\phi}c_1(\phi)c_2(\phi)\gamma_{\phi}^{-1} ] $$

{\sl Claim 3:} $\omega$ is primitive.

By change of basepoint, i.e. Lemma \ref{bpoint}, this is equivalent to show that $c_1(\phi)c_2(\phi)$ is primitive. Assume the contrary, i.e. that there is a closed curve $\tau$ and $k>1$ so that $\tau^k=c_1(\phi)c_2(\phi)$. Note that $\tau$ must be cyclically reduced, since $c_1(\phi)c_2(\phi)$ is, and that $l(c_1(\phi))=l(c_2(\phi))$ by Remark \ref{pair2}.

 If $k$ is even, we write $k=2j$ and get that $c_1(\phi)=\tau^j=c_2(\phi)$, which is absurd by Remark \ref{pair2} and the assumption that $\phi$ reverses orientation, since a permutation of $\alpha$ cannot agree with a permutation of $\alpha^{-1}$. If $k$ is odd, we write $k=2j+1$ and we get that $$\tau=vw  \,\,\mbox{ with }\,\, c_1(\phi)=(vw)^jv \,\,\mbox{ and }\,\, c_2(\phi)=w(vw)^j $$ where we also have $l(v)=l(w)$. Again by Remark \ref{pair2}, we get that $(w^{-1}v^{-1})^jw^{-1}$ is a permutation of $(vw)^jv$. Since $l(v)=l(w)$ and $v$ cannot overlap $v^{-1}$ (nor $w$ overlap $w^{-1}$), we must have $v=w^{-1}$, which is absurd since $\tau$ cannot be trivial. Thus we have shown Claim 3.

{\sl Claim 4:} There is a non-trivial $\theta\in T(\Lambda_{\alpha})$ such that $\theta\omega\theta^{-1}=\omega$.

To show this claim it will be useful to write Equations \eqref{c1h} and \eqref{c1g} for the case when $s$ is constant:
\begin{equation}\label{c1c2 s cte}
c_1(\alpha,h)=c_2(\alpha,g)=r \,\, \mbox{ and }\,\, c_1(\alpha,g)=c_2(\alpha,h)=t,
\end{equation}  
and we also get that $\bar t(0) = \tilde \gamma_g(1)=b_g(1)$ and $\tilde r(1)=\tilde t(0)=[\alpha]b_h(0)$. By Remark \ref{rm:inv c1c2} we deduce that 
\begin{equation}\label{c1c2 -1 s cte}
c_1(\alpha,h^{-1})=c_2(\alpha,g^{-1})=t^{-1} \,\, \mbox{ and }\,\, c_1(\alpha,g^{-1})=c_2(\alpha,h^{-1})=r^{-1}
\end{equation}  

Define $$\psi = h^{-1}\phi g. $$
Let us check that $\psi\in T^-(\Lambda_{\alpha})$.  
Note first that, since $g$ preserves orientation, we have $\tilde\gamma_{g^{-1}}(1)=b_{g^{-1}}(1)=a_g(1)$ by Remark \ref{inverse}. By Equation \eqref{c1c2 -1 s cte} we may take $\hat r$ the lift  of $r$ that ends at $\hat r(1)=a_g(1)$, i.e. so that $\hat r^{-1}$ lifts $c_1(\alpha,g^{-1})$ starting at $a_g(1)=\tilde\gamma_{g^{-1}}(1)$. Thus we see that $\hat r^{-1}$ is contained in $\Lambda_{\alpha}$ and is positively oriented, since it is a segment of $\tilde \alpha_{g^{-1}}$. We also see that $\hat r(0)=\hat r^{-1}(1) = [\alpha]a_g(0)$. 

We shall describe the action of $\psi = h^{-1}\phi g$ on $\hat r$. First we notice that $g\hat r$ ends at $ga_g(1)=b_g(1)=\tilde\gamma_g(1)$. Next we recall that $\phi \tilde\gamma_g(1)=\tilde r(1)$, so by equivariance of the horizontal lifting we get that $\phi g\hat r=\tilde r$. Finally we get that $\psi\hat r =h^{-1}\tilde r$. Recalling Equation \eqref{c1c2 -1 s cte} we see that this curve lifts $c_2(\alpha,h^{-1})^{-1}$ starting at $h^{-1}\tilde r(0)=h^{-1}b_h(1)=a_h(1)$. Thus $\psi\hat r$ is contained in $h^{-1}\Lambda_{\alpha}$ and meets $\Lambda_{\alpha}$ exactly at $\psi\hat r(0)=a_h(1)$. This shows that $\psi\in T(\Lambda_{\alpha})$, and it must reverse orientation strictly by Lemma \ref{oriented} since $\phi=h\psi g^{-1}$.

Now we set $h^{\ast}=[\alpha]h[\alpha]^{-1}$, and a simple computation from Equation \ref{phi conj} and the definition of $\psi=h^{-1}\phi g$ gives us 
\begin{equation} \label{dot pr psi} h^{\ast} [\alpha] \psi [\alpha]^{-1}\psi^{-1}{h^{\ast}}^{-1}=[\alpha] \phi [\alpha]^{-1}\phi^{-1} 
\end{equation}
By  Remark \ref{mn} there are  integers $i, j, k, l$ 
such that   $[\alpha]^i\phi[\alpha]^j$ and  $[\alpha]^k\psi[\alpha]^l$ belong to $T_1(\alpha)$. Now we apply Lemma \ref{reversing2} to these elements and Equation \eqref{dot pr psi}, hence for $n=i-k$, $m=k-l$ we obtain
$$
\psi=[\alpha]^n\phi[\alpha]^m 
$$
and Equation \eqref{dot pr psi} becomes 
$$
 (h^{\ast}  [\alpha]^n)[\alpha]\phi [\alpha]^{-1}\phi^{-1}({h^{\ast}}[\alpha]^n)^{-1}=[\alpha] \phi [\alpha]^{-1}\phi^{-1} 
$$
So we set $\theta=h^{\ast}[\alpha]^n$ and get that $\theta\omega\theta^{-1}=\omega$. Note that $\theta=[\alpha]h[\alpha]^{n-1}$ is not trivial, since $h$ is not a power of $[\alpha]$, and belongs to $T(\Lambda_{\alpha})$, since $\theta\Lambda_{\alpha}=[\alpha]h\Lambda_{\alpha}$ intersects $\Lambda_{\alpha}$ in the segment $[\alpha]b_h(I)$. Thus we have shown Claim 4.

Putting $C=\alpha(I)$ and recalling Lemma \ref{lem:ident}, we see that $\omega,\theta\in \mathcal L_x(C)$ which is a free group, so Claims 3 and 4 imply that 
\begin{equation} \label{c-phi}
\theta=\omega^i \,\,\mbox{ for some } i\in\Z, i\neq 0
\end{equation} We shall reach a contradiction by showing that $\theta\Lambda_\alpha$ is disjoint from $\Lambda_{\alpha}$, i.e.that $\theta\notin T(\Lambda_{\alpha})$, against Claim 4. Since $T(\Lambda_{\alpha})$ is closed under taking inverses (Remark \ref{inv T}), we may assume that $i>0$.

Changing the basepoint if necessary, we may assume that $b_{\phi}(1)=id_x$. Note that such a change of basepoint ammounts to repalce $\alpha$ by a permutation of it, and does not change the curves $c_1(\phi)$ and $c_2(\phi)$. By Lemma \ref{bpoint} this change of basepoint also preserves Claims 3 and 4, and Equation \eqref{c-phi}. From $b_{\phi}(1)=id_x$ we get that $\gamma_{\phi}$ is constant, $c_1(\phi)=\alpha$ and $c_2(\phi)$ is a non-trivial permutation of $\alpha^{-1}$. 

Let $\tilde c_2(\phi)$ be the horizontal lift of $c_2(\phi)$ that ends at $b_{\phi}(1)=id_x$. Since $\phi$ reverses orientation, the discussion after Definition \ref{d:c1c2} implies that $\tilde c_2(\phi)$ intersects $\Lambda_{\alpha}$ exactly in the segment $b_{\phi}(I)$, and in particular $\tilde c_2(\phi)(0)\notin \Lambda_{\alpha}$. On the other hand we take $\eta$ the horizontal lift of $(c_1(\phi)c_2(\phi))^i$ that begins at $id_x$, so we have $\eta(1)=\omega^i=\theta$. Note that since $c_1(\phi)=\alpha$, we have a reduced factorization $$\eta=\tilde\alpha\nu$$ where $\nu$ meets $\Lambda_{\alpha}$ only at $\nu(0)$, since the lift of $c_2(\phi)$ starting at $[\alpha]=[\alpha]b_{\phi}(1)$ is an initial segment of $\nu$. Observe also that $\theta\tilde c_2(\phi)$, being the lift of $c_2(\phi)$ that ends at $\theta=\nu(1)$, is a final segment of $\nu$ (if $i=1$ it agrees with the initial segment just discussed). Since $\mathcal E_x(C)$ is a tree, the line $\theta\Lambda_{\alpha}$ cannot intersect $\Lambda_{\alpha}$ without containing $\nu(I)$, but acting by $\theta$ we see that $\theta\tilde c_2(\phi)$ intersects $\theta\Lambda_{\alpha}$ only at $\theta b_h(I)$, which does not contain $\theta \tilde c_2(\phi)(0)\in\nu(I)$. Therefore $\theta\notin T(\Lambda_{\alpha})$ and we have a contradiction, concluding the proof.

\end{proof}

\section{Signs of the terms of the bracket}

Here we shall study the signs of the linked pairs in $LP_2(X,X^{-1})$ for $X\in S(M)$, showing that linked pairs yielding the same dot product have also the same sign, and finally arriving at the proof of Theorem \ref{simple}. In order to do so, we may need to consider a small deformation of a curve $\alpha$ representing $X$.

\subsection{Deformations of 1-complexes}

Let $C$ be a one dimensional complex, and consider $p\in C$. Then a small enough neighborhood $B$ of $p$ in $C$ is homeomorphic to a wedge of intervals, each one with an endpoint at $p$ and the other in $\partial B$. We call the {\em valence} of $p$ in $C$ to the number of such segments. A point of valence $2$ will be called a {\em regular point} of $C$. Observe that the set of non-regular points is discrete, while the components of the set of regular points are arcs.

We will be interested in complexes of the form $C=\alpha(I)$ where $\alpha\in \Omega$ is a cyclically reduced, non-constant closed curve. Then $C$ has no points of valence $1$, and by compactness, the set of non-regular points (i.e. {\em branching points}) is finite. The closure of a component of the set of regular points is a segment with endpoints at non-regular points. Note that, replacing $\alpha$ by a permutation if necessary, we can assume that $x=\alpha(0)$ is a regular point.

Before introducing the perturbation of $\alpha$ that we need, it is worth recalling that $C_{\varepsilon}$ is the $\varepsilon$-neighborhood of $C$ in $M$, and for $\varepsilon$ small enough, Lemma \ref{tubular} states that $C_{\varepsilon}$ retracts by deformation onto $C$. Then the map induced by the inclusion $i_\ast: \pi_1(C, x)\rightarrow \pi_1( C_\varepsilon, x)$ is an isomorphism, and  its inverse is $\chi_\ast: \pi_1(C_\varepsilon, x)\rightarrow \pi_1( C, x) $, which is induced by a retraction $\chi:C_{\varepsilon}\to C$.

\begin{lemma}\label{approximation}
Let $\alpha$ be a cyclically reduced closed curve and $C=\alpha(I)$. Assume that $x=\alpha(0)$ is a regular point of $C$. %Let  $C_{\varepsilon}$ be the  $\varepsilon$ neighborhood  of $C=\alpha(I)$
Then there are $\varepsilon>0$ and a closed, cyclically reduced curve $\gamma\subset C_{\varepsilon}$ such that 
\begin{enumerate}
\item $\gamma$ is an $\varepsilon$-perturbation of $\alpha$, with $\gamma(0)=x$,
\item $\Gamma=\gamma(I)$ has no points of valence greater than $3$, and
\item The map $i_\ast: \pi_1(\Gamma, x)\rightarrow \pi_1( C_\varepsilon, x)$ induced by the inclusion is an isomorphism.
\end{enumerate}

Moreover, $\gamma$ only differs from $\alpha$ in an $\varepsilon$-neighborhood of the non-regular points of $C$.

%$\gamma\subset C_\varepsilon$ based at $x$, such that every point in  $\gamma(I)$ has multiplicity not greater than $3$, and the map induced by the inclusion  $i_\ast: \pi_1(\gamma(I), x)\rightarrow \pi_1( C_\varepsilon, x)$ is an isomorphism.  
\end{lemma}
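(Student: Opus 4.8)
The plan is to leave $\alpha$ untouched away from the branch points of $C$ and, near each branch point, to replace the ``star'' that $C$ forms there by a carefully chosen trivalent tree, rerouting through it the strands of $\alpha$. First I would fix the local picture: since $C=\alpha(I)$ is compact and piecewise geodesic, its non-regular points form a finite set $p_1,\dots,p_N$, and after a cyclic permutation of $\alpha$ we may assume $x$ is regular. Choose pairwise disjoint normal balls $B_j=B(p_j,\rho)$ with $\rho$ smaller than the injectivity radius, than the minimal edge length of $C$ at $p_j$, and than the distances between distinct $p_j$'s; then $C\cap B_j$ is the union of $k_j$ geodesic radii from $p_j$ to points $z^j_1,\dots,z^j_{k_j}\in\partial B_j$, where $k_j$ is the valence of $p_j$, and the orientation of $M$ puts these ``prongs'' in cyclic order. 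Each visit of $\alpha$ to $p_j$ enters along one prong and leaves along another (distinct ones, as $\alpha$ is reduced); recording these pairs yields a multigraph $H_j$ on the prongs (the ``turn graph''), and since $C=\alpha(I)$ every prong is used by $\alpha$, so $H_j$ has no isolated vertex and hence every connected component of $H_j$ has at least two vertices.

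The combinatorial heart of the argument is to build, inside $B_j$, a planar trivalent tree $T_j$ made of geodesic segments, with leaves exactly $z^j_1,\dots,z^j_{k_j}$ in the prescribed cyclic order, and such that \emph{every edge of $T_j$ lies on the $T_j$-path joining two prongs of a single component of $H_j$}. I would do this with a caterpillar: pick two prongs $\zeta',\zeta''$ in one component; the two arcs into which they split the cyclic order become the two sides of the spine, and the remaining prongs are hung off the spine vertices so as to realize the cyclic order. The $T_j$-path from $\zeta'$ to $\zeta''$ then passes through every internal vertex, hence covers the whole spine, while the pendant edge of any other prong is covered because its component has at least two elements and $H_j$ restricted to a component is connected (so in a tree the union of the $T_j$-paths over the edges of that component equals its convex hull, which contains each of its prongs' pendant edges). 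Placing the internal vertices of $T_j$ in a tiny ball around $p_j$ keeps $T_j$ inside $C_\varepsilon$ for $\varepsilon$ small. Now define $\gamma$ to equal $\alpha$ outside $\bigcup_j\operatorname{int}B_j$ and, inside $B_j$, to send each visit-strand of $\alpha$ along the $T_j$-path joining its two prongs. One checks $\gamma$ is piecewise geodesic, is an $\varepsilon$-perturbation of $\alpha$ with $\gamma(0)=x$, and is cyclically reduced --- no backtrack is created, since the path between two distinct leaves of a tree never backtracks and outside $\bigcup_j B_j$ the curve agrees with the cyclically reduced $\alpha$ (this is where the regularity of $x$ enters). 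Since $T_j$ is trivalent and, by construction, every edge of $T_j$ is traversed by $\gamma$, each point of $\Gamma=\gamma(I)$ has valence at most $3$; this gives (1) and (2), and the ``moreover'' is immediate because $\gamma$ differs from $\alpha$ only inside $\bigcup_j B_j$.

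For (3) I would show that the inclusion $i\colon\Gamma\hookrightarrow C_\varepsilon$ is a homotopy equivalence. Put $C^{\mathrm{out}}=C\setminus\bigcup_j\operatorname{int}B_j=\Gamma\setminus\bigcup_j\operatorname{int}B_j$; then $C=C^{\mathrm{out}}\cup\bigcup_j(C\cap B_j)$ and $\Gamma=C^{\mathrm{out}}\cup\bigcup_j T_j$, where $C\cap B_j$ and $T_j$ are contractible subcomplexes meeting $C^{\mathrm{out}}$ in the same finite set $Z_j=\{z^j_1,\dots,z^j_{k_j}\}$ --- here one uses that $\Gamma\cap B_j$ is \emph{all} of $T_j$, which is precisely what the combinatorial step secures. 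Collapsing these (disjoint) contractible subcomplexes is a homotopy equivalence of both $C$ and $\Gamma$, and the two quotients are literally the same space, namely $C^{\mathrm{out}}$ with each $Z_j$ identified to a point. If $\chi\colon C_\varepsilon\to C$ is the retraction of Lemma~\ref{tubular}, then for $\varepsilon$ small enough $\chi$ is the identity on $C^{\mathrm{out}}$ and maps each $T_j$ into $C\cap B_j$, so $\chi|_\Gamma\colon\Gamma\to C$ followed by the collapsing map of $C$ equals the collapsing map of $\Gamma$; since the latter two are homotopy equivalences, so is $\chi|_\Gamma=\chi\circ i$, and since $\chi$ is one, the two-out-of-three property forces $i$ to be one as well. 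In particular $i_\ast$ is an isomorphism on $\pi_1$, which is (3).

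The main obstacle is exactly this combinatorial step: one must choose the resolving tree $T_j$ so that $\gamma$ sweeps out all of it. Routing the turns of $\alpha$ along pairwise disjoint arcs (which the cyclic order often permits) would separate strands that genuinely met at $p_j$ in $C$ and in general changes the homotopy type --- already for $\alpha$ a figure-eight curve such a naive resolution can turn $\Gamma$ into a circle. Resolving \emph{compatibly} with the turn graph, as the caterpillar-with-chosen-poles does, is what repairs this. Everything else --- the choices of small constants, the verification that $\gamma$ is cyclically reduced, and the collapsing argument --- is routine.
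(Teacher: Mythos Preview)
Your approach is correct and essentially matches the paper's: replace the star at each branch point by a planar trivalent (caterpillar) tree with the same boundary leaves, reroute $\alpha$ through it, and verify coverage and the homotopy type. The one simplification in the paper is that it takes the spine of $Y_p$ to be an \emph{actual} strand $\eta\subset\alpha$ traversing $B_p$ (so its two prongs are joined by an edge of your turn graph, not merely in the same component); then the spine is covered by $\gamma$ tautologically and each pendant edge is covered because its leaf is visited by $\gamma$, so your convex-hull argument over components of $H_j$ becomes unnecessary.
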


\begin{proof}

Set $\varepsilon$ as in Lemma \ref{tubular}, though we may need to reduce it further. For each non-regular point $p$ of $C$ we consider $B_p$ a closed ball in $M$ centered at $p$ with radius $\varepsilon/2$. Note that if $\varepsilon$ is small enough, then $B_p$ is a normal ball and $B_p\cap C$ is a union of geodesic segments joining $p$ to $\partial B_p$. Reducing $\varepsilon$ if necessary, we can also make the sets $B_p$ pairwise disjoint, and disjoint from $x$ by our assumption.

First we will construct the complex $\Gamma$. For each non-regular point $p$ of $C$, we take a segment $\eta\subset\alpha$ with endpoints in $\partial B_p$ and so that $\eta(I)\subset B_p$, i.e. $\eta$ is a segment of $\alpha$ that traverses $B_p$. Then we consider a piecewise geodesic complex $Y_p$ contained in $B_p$, with $Y_p\cap\partial B_p = C\cap\partial B_p$, and so that $Y_p$ is a finite tree containing $\eta(I)$, whose branching points lie in $\eta(I)$ and have valence $3$. Figure \ref{fig:6} shows an example of this construction. It can be interpreted as a deformation of $C\cap B_p$ that spreads out the segments that are not in $\eta$, so that their endpoints are spread along $\eta$ instead of converging at $p$. Then we define $\Gamma$ so that it agrees with $C$ in the complement of $B=\bigcup_p B_p$, and that $\Gamma\cap B_p=Y_p$ for every non-regular point $p$ in $C$.

\begin{figure}[htbp]
\input{fig9.pic}
\caption{proof of lemma \ref{approximation}}\label{fig:6}
\end{figure}

By construction, $\Gamma$ is a connected piecewise geodesic complex and all its non-regular points are of valence $3$. Note also that $B_p$ retracts by deformation to $Y_p$ for each $p$. These maps can be extended to a retraction by deformation $\chi_1:C_{\varepsilon}\to\Gamma$, thus obtaining point (3) in the statement.

Now we describe the curve $\gamma$. Write 
\begin{equation}\label{alpha}
\alpha=\alpha_0\beta_1\alpha_1\cdots \beta_{n}\alpha_n
\end{equation}
where $\alpha_i$ is contained in the closure of the complement of $B$ for all $i=0,\ldots,n$, and $\beta_j$ is contained in $B$, for $j=1,\ldots,n$. None of these curves is constant since $x$ is outside $B$.
Then for each $j$ there is some $p$ with $\beta_j\subset B_p$, and we have $\beta_j(0),\beta_j(1)\in\partial B_p$. Note that, by construction, there is a unique reduced curve $\bar \beta_j$ joining $\beta_j(0)$ to $\beta_j(1)$ in $Y_p$. Thus $\bar\beta_j\subset \Gamma\cap B_p$ with  $\bar\beta_j(0)=\beta_j(0)$ and  $\bar\beta_j(1)=\beta_j(1)$. We define 
\begin{equation}\label{gamma}
\gamma=\alpha_0\bar\beta_1\alpha_1\cdots \bar\beta_{n}\alpha_n
\end{equation}
i.e. we replace each $\beta_j$ with $\bar\beta_j$. This is an $\varepsilon$-perturbation of $\alpha$, since for each $j$, $\beta_j$ and $\bar\beta_j$ are in the same ball of radius $\varepsilon/2$. It is also clear that $\gamma$ admits no reductions, and that $\gamma(0)=\gamma(1)=x$. It only remains to show that $\gamma(I)=\Gamma$. By construction we have $\gamma(I)\subseteq\Gamma$, and it is also clear that the closure of $\Gamma-B$ is contained in $\gamma(I)$, writing this set as $\bigcup_i\alpha_i(I)$. So we must show that $Y_p\subset\gamma(I)$ for each non-regular point $p$ of $C$. For one such $p$ we note that the curve $\eta$ used in the construction of $Y_p$ is in $\gamma(I)$: by its definition $\eta=\beta_j$ for some $j$ (maybe more than one), and in that case we also have $\bar\beta_j=\eta$. On the other hand, since we have $\partial B_p\cap\Gamma\subset\gamma(I)$, the rest of the segments making up $Y_p$ must also be contained in $\gamma(I)$.

\end{proof}

\subsection{Signs and lifts}\label{liftsign}

Now we shall interpret the signs of linked pairs in terms of horizontal lifting, in the same fashion as we did for intersection pairs and dot products in the previous sections. Let $\alpha\in\Omega$ be a non-trivial cyclically reduced closed curve and $x=\alpha(0)$.

\begin{definition} \label{def:liftsign} For $g\in T_0(\alpha)$ we write $$\epsilon_g(\alpha)=sign(P_g,Q_g), $$ i.e. the sign of the linked pair of $\alpha$ corresponding to $g$. When the curve $\alpha$ is clear from the context, we just write $\epsilon_g$.
\end{definition}

Let $C=\alpha(I)$ and recall from Lemma \ref{lem:ident} that the universal cover $\tilde C$ is isomorphic, as a principal fiber bundle, to $\mathcal E_x(C)$. We choose such an isomorphism by picking  $\tilde x\in \tilde C$ a lift of $x$, and setting that $id_x$ corresponds to $\tilde x$. Then we can identify $\Lambda_{\alpha}$ with the infinite lift of $\alpha$ to $\tilde C$ that starts at $\tilde x$. 

Let $C_{\varepsilon}$ be a neighborhood of $C$ satisfying Lemma \ref{tubular}. Then its universal cover $\tilde C_{\varepsilon}$ contains $\tilde C$, and retracts onto it by lifting the retraction $\chi:C_{\varepsilon}\to C$. Note that the complement of $\Lambda_{\alpha}$ in $\tilde C_{\varepsilon}$ has two connected components. We give $\tilde C_{\varepsilon}$ the orientation lifted from that of $C_{\varepsilon}\subset M$, and let $C^+(\alpha)$ be the {\em left side} of $\Lambda_{\alpha}$. More precisely, $C^+(\alpha)$ is the component of $\tilde C_{\varepsilon}-\Lambda_{\alpha}$ so that the standard orientation of $\Lambda_{\alpha}$, given by $\tilde\alpha$, agrees with the orientation induced on $\Lambda_{\alpha}$ as part of the boundary  $\partial C^+(\alpha)$. The other component, i.e. the {\em right side} of $\Lambda_{\alpha}$, will be denoted by $C^-(\alpha)$.

Let $a_1$, $a_2$, $\nu$ be small geodesic segments in $\tilde C_{\varepsilon}$, so that $a_1(1)=a_2(0)$ and $\nu$ meets $a_1a_2$ only at this point, which is an endpoint of $\nu$. Note that we have $sign(a_1a_2,\nu)=sign(\pi(a_1a_2),\pi(\nu))$, by definition of the orientation of $\tilde C_{\varepsilon}$.
In case that $a_1a_2$ is contained in $\Lambda_{\alpha}$ with positive orientation, then we have $sign(a_1a_2,\nu)=1$ if either $\nu(0)=a_1(1)$ and $\nu(1)\in C^+(\alpha)$, or $\nu(1)=a_1(1)$ and $\nu(0)\in C^-(\alpha)$. In the reverse cases we have $sign(a_1a_2,\nu)=-1$.

Now consider $g\in T_1(\alpha)$. We identify $\mathcal L_x(C)$ with $\pi_1(C_{\varepsilon},x)$ acting on $\tilde C_{\varepsilon}$ by deck transformations, using Lemmas \ref{lem:ident} and \ref{tubular}. Then, by definition of $T_1(\alpha)$, we have that $g[\alpha]\tilde x$ and $g[\alpha^{-1}]\tilde x$ are not contained in $\Lambda_{\alpha}$. Due to the previous observations, we see that $g$ corresponds to a linked pair, i.e. $g\in T_0(\alpha)$, iff $g[\alpha]\tilde x$ and $g[\alpha^{-1}]\tilde x$ are in different components of $\tilde C_{\varepsilon}-\Lambda_{\alpha}$. In that case, we have $\epsilon_g=1$ if $g[\alpha^{-1}]\tilde x\in C^-(\alpha)$, and thus $g[\alpha]\tilde x\in C^+(\alpha)$, which is to say that $g\Lambda_{\alpha}$ crosses $\Lambda_{\alpha}$ going from its right side and towards its left side. We have $\epsilon_g=-1$ if the reverse holds. We compile these results for future reference 

\begin{remark} \label{rm:signs} Let $g\in T_1(\alpha)$, then
\begin{itemize}
\item $g\in T_0(\alpha)$ iff $g[\alpha]\tilde x$ and $g[\alpha^{-1}]\tilde x$ are on different sides of $\Lambda_{\alpha}$.
\item In that case, $\epsilon_g=1$ iff $g[\alpha^{-1}]\tilde x\in C^-(\alpha)$, i.e. is at the right side of $\Lambda_{\alpha}$.
\end{itemize}
\end{remark}

In a similar manner, we see that for $g\in T_0(\alpha)$ we have $\epsilon_g=1$ iff $\tilde\beta_g(1)\in C^-(\alpha)$. Equivalently, iff $g[\alpha]g^{-1}\tilde\beta_g(0)\in C^+(\alpha)$. Noting that $c_2(\alpha,g)$ ends at $t_g(1)$ by Definition \ref{d:c1c2}, we have the following.

\begin{remark} \label{rm:signs2} For $g\in T_0(\alpha)$ let  $\tilde c_2(\alpha,g)$ be the lift of $c_2(\alpha,g)$ that ends at $b_g(1)$ (namely, the one contained in $g[\alpha]g^{-1}\tilde\beta_g$). Then $\epsilon_g=1$ iff $\tilde c_2(\alpha,g)$ is at the left of $\Lambda_{\alpha}$ (more precisely, is contained in the closure of $C^+(\alpha)$).
\end{remark} 

By the comments after Definition \ref{d:c1c2} we see that the intersection of $\tilde c_2(\alpha,g)$ with $\Lambda_{\alpha}$ is either the endpoint $b_g(1)$, in case $g$ preserves orientation, or the segment $b_g(I)$, if $g$ reverses orientation. We also point out that the terminology of orientation preserving or reversing elements of Definition \ref{def:orient}  does not relate to the orientation of $\tilde C_{\epsilon}$, which is preserved by every deck transformation. 

%Let $\tilde c_2(\alpha,g)$ be the lift of $c_2(\alpha,g)$ that is contained in $\tilde\beta_g$, i.e. the one beginning at $b_g(0)$ if $g$ preserves orientation, or at $b_g(1)$ if $g$ reverses orientation. Then we get that $\epsilon_g=1$ iff $\tilde c_2(\alpha,g)$ lies at the right of $\Lambda_{\alpha}$ (i.e. is contained in $C^-(\alpha)$ aside from the endpoint in $\Lambda_{\alpha}$).

%boundary orientation of $\partial C^+(\alpha)$ gives $\Lambda_{\alpha}\subset\partial C^+(\alpha)$ the same orientation as the one given by $\tilde\alpha$.  

\subsection{Signs and deformations}

Let $\alpha\in\Omega$ be a cyclically reduced closed curve, and assume that $x=\alpha(0)$ is a regular point of $C=\alpha(I)$. Take $\varepsilon$ and $\gamma$ as given by Lemma \ref{approximation}, and let $\Gamma=\gamma(I)$. Then we can identify $\mathcal E_x(\Gamma)$ with $\tilde \Gamma$, the universal cover of $\Gamma$, which is embedded in $\tilde C_{\varepsilon}$. This identifies $\mathcal L_x(\Gamma)$ with $\pi_1(C_{\varepsilon},x)$, and thus with  $\mathcal L_x(C)$. We will be assuming these identifications in the sequel, though we should make clear  that $\mathcal L_x(C)$ and $\mathcal L_x(\Gamma)$ are different as subgroups of $\mathcal L_x(M)$, and that $\mathcal E_x(C)\cup \mathcal E_x(\Gamma)$, as a subspace of $\mathcal E_x(M)$, is not homeomorphic to $\tilde C\cup\tilde\Gamma$. In fact, this correspondence identifies $[\gamma]\in \mathcal L_x(\Gamma)$ with $[\alpha]\in\mathcal L_x(C)$. 

We also identify $\Lambda_{\gamma}$ with the infinite lift of $\gamma$ at $\tilde x$, the lift of $x$ used to define both correspondences $\mathcal E_x(C)\cong \tilde C$ and $\mathcal E_x(\Gamma)\cong\tilde\Gamma$. Thus the sets $T(\Lambda_{\alpha})$ and $T(\Lambda_{\gamma})$ can be considered as subsets of the same group $\pi_1(\tilde C_{\varepsilon},x)$, which we shall see as the group of deck transformations of $\tilde C_{\varepsilon}$.

%{\bf Comentar:} %Lx(C) y Lx(Gamma) no iguales, pero se identifican por isomorfismo. \tilde C y \tilde Gamma homeo a Ex(C) y Ex(Gamma), pero las uniones no son homeo. Ver Lambda_a Lambda_g en \tilde C_e y T_1(a) T_1(g) como subconjuntos de pi_1(C_e) y/o deck transformations. 

%For the sequel we identify   $\pi_1(\alpha(I), x)$    and   $\pi_1(\gamma(I), x)$ with their image in  $\pi_1( C_\varepsilon, x)$. 

%Let $C_{\varepsilon}$ as in subsection \ref{dif}. Let $\tilde{C_{\varepsilon}}$ be the universal cover of $C_{\varepsilon}$. By Lemma \ref{lem:ident}  $\mathcal E_x(C)=\tilde C \hookrightarrow \tilde{C_{\varepsilon}}$. Therefore $T(\Lambda_{\alpha})\Lambda_{\alpha}$ is injectively included  in  $\tilde{C_{\varepsilon}}$. 

%{\bf ---------usar la def $sign(ab,c)$ de subs \ref{LPs} ----------}

%Let $(P, Q)$ be a linked pair corresponding to $g\in T_0(\alpha)$. Let $\tilde c_2(\alpha, g)$ be the lift starting at $b_g(1)$. Then $sign(P, Q)=1$ if and only if $\tilde c_2(\alpha, g)$ lies at the right of $\Lambda_{\alpha}$. 

%{\bf ------ Mostrar $\mathcal L_x$ preserva orientaci\'on 2D (en cada v\'ertice): se extienden a deck transformations--------------}

%Recall the definition of multiplicity in \ref{multiplicity}. 

\begin{lemma}\label{sign}
Let $\alpha$ and $\gamma$ be as in Lemma \ref{approximation}, and take $g\in \pi_1(\tilde C_{\varepsilon},x)$. Then $g\in T_0(\alpha)$ iff $g\in T_0(\gamma)$, in which case $\epsilon_g(\alpha)=\epsilon_g(\gamma)$. 
\end{lemma}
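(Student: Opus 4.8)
The statement compares $T_0(\alpha)$ and $T_0(\gamma)$ as subsets of the common deck group $\pi_1(\tilde C_\varepsilon, x)$, where $\gamma$ is the perturbation of $\alpha$ supplied by Lemma \ref{approximation}. The key feature of $\gamma$ is that it differs from $\alpha$ only inside the disjoint normal balls $B_p$ around the non-regular points $p$ of $C$, and that inside each $B_p$ the modification is a ``spreading out'' of the branches of $C$ along a chosen traversing segment $\eta$, without introducing new topology (point (3) of Lemma \ref{approximation}). The plan is to work entirely with the lift description of linked pairs and signs from subsection \ref{liftsign}, i.e. Remark \ref{rm:signs}: for $g \in T_1$ we have $g \in T_0$ iff $g[\alpha]\tilde x$ and $g[\alpha^{-1}]\tilde x$ lie on opposite sides of $\Lambda_\alpha$ in $\tilde C_\varepsilon - \Lambda_\alpha$, and $\epsilon_g = 1$ iff $g[\alpha^{-1}]\tilde x \in C^-(\alpha)$. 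Since $[\gamma]$ corresponds to $[\alpha]$ under the identification $\mathcal L_x(\Gamma) \cong \pi_1(C_\varepsilon,x) \cong \mathcal L_x(C)$, both $\Lambda_\alpha$ and $\Lambda_\gamma$ are infinite lifts of closed curves freely homotopic in $C_\varepsilon$ (indeed homotopic through the deformation), so they sit inside $\tilde C_\varepsilon$ as two bi-infinite lines representing the same conjugacy class.

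First I would establish the topological comparison: $\Lambda_\alpha$ and $\Lambda_\gamma$ are ``parallel'' in $\tilde C_\varepsilon$ in the strong sense that $\tilde C_\varepsilon - \Lambda_\alpha$ and $\tilde C_\varepsilon - \Lambda_\gamma$ have the same two components $C^\pm$, and moreover the retraction $\chi_1: C_\varepsilon \to \Gamma$ (or the one onto $C$) lifts to a map carrying one line to the other while fixing the complementary components setwise. Concretely, $\Lambda_\alpha$ and $\Lambda_\gamma$ differ only over the balls $B_p$: outside $\bigcup_p B_p$ they literally coincide (as subsets of $\tilde C_\varepsilon$, after the identifications), and inside a lift of each $B_p$ the arc of $\Lambda_\alpha$ is $\tilde\beta_j = \tilde\eta$ while the arc of $\Lambda_\gamma$ is $\tilde{\bar\beta}_j$, but by the construction in Lemma \ref{approximation} we have $\bar\beta_j = \eta = \beta_j$ whenever $\beta_j$ is the traversing segment $\eta$ used to build $Y_p$ — so in fact one can choose the reduced forms so that $\Lambda_\alpha$ and $\Lambda_\gamma$ agree as point sets in the relevant lifted balls too, the difference being only in the \emph{ambient} complex $\tilde C$ versus $\tilde\Gamma$, not in the line itself. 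This should be packaged as: there is a homeomorphism $\tilde C_\varepsilon \to \tilde C_\varepsilon$, equivariant for the deck action, isotopic to the identity, carrying $\Lambda_\alpha$ to $\Lambda_\gamma$ and respecting the orientations and the labelling of $C^\pm$. Granting this, $g \in T(\Lambda_\alpha) \iff g\Lambda_\alpha \cap \Lambda_\alpha \neq \emptyset \iff g\Lambda_\gamma \cap \Lambda_\gamma \neq \emptyset \iff g \in T(\Lambda_\gamma)$, and likewise $T_1$ and the orientation-preserving/reversing dichotomy transfer.

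Then the $T_0$ and sign statements follow mechanically from Remark \ref{rm:signs}: $g \in T_0(\alpha)$ iff $g[\alpha]\tilde x$ and $g[\alpha^{-1}]\tilde x$ are separated by $\Lambda_\alpha$; under the equivariant homeomorphism this is iff $g[\gamma]\tilde x$ and $g[\gamma^{-1}]\tilde x$ are separated by $\Lambda_\gamma$ — using that $[\gamma]$ and $[\alpha]$ are the same deck transformation and that $\tilde x$ is the common chosen lift — which is iff $g \in T_0(\gamma)$. The sign equality is the same argument applied to the condition $g[\alpha^{-1}]\tilde x \in C^-(\alpha)$, since the homeomorphism preserves the left/right labelling by construction (it is orientation-preserving on $\tilde C_\varepsilon$ and isotopic to the identity). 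The one point needing care: the perturbation moves the finitely many vertices of $\tilde C$, so $g[\alpha]\tilde x$ might a priori land exactly on $\Lambda_\alpha$ or change sides — but this is precisely excluded because $g \in T_1$ means $a_g, b_g$ avoid $\tilde\alpha(1)$, the balls $B_p$ are taken disjoint from $x$, and $\varepsilon$ may be shrunk so that none of the (finitely many relevant) points $g[\alpha^{\pm1}]\tilde x$ for $g \in T_0$ lie within $\varepsilon$ of the perturbation locus; equivalently one invokes that $T_0(\alpha)$ is finite (Definition \ref{def:T0} and Remark \ref{rm: no potencias} together with finiteness of $LP_2$).

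\textbf{Main obstacle.} The real work is the first step — making precise and proving the claim that $\Lambda_\alpha$ and $\Lambda_\gamma$ are ambient-isotopic in $\tilde C_\varepsilon$ by a deck-equivariant isotopy preserving sides and orientation. One must check that the local surgery inside each $B_p$ (replacing the star $C \cap B_p$ by the tree $Y_p$) lifts consistently to all deck translates of the balls, that it does not create or destroy intersections between translates of the line, and — most delicately — that it does not alter which side of $\Lambda$ a given lifted basepoint $g[\alpha^{\pm1}]\tilde x$ lies on. Since the deformation is supported near the branch points and the line $\Lambda$ itself passes through those balls along the segments $\eta$ that are \emph{kept}, the line is essentially unmoved and only the ``other'' branches are pushed aside; but writing this cleanly, keeping track of orientations on $\partial C^\pm$, is where the bookkeeping lies. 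Everything after that is a direct translation through Remarks \ref{rm:signs} and \ref{rm:signs2}.
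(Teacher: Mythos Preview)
Your approach has the right ingredients but is packaged in an unnecessarily complicated way, and contains a subtle gap. You propose constructing a deck-equivariant ambient isotopy of $\tilde C_\varepsilon$ carrying $\Lambda_\alpha$ to $\Lambda_\gamma$, and then claim this yields $g\in T(\Lambda_\alpha)\iff g\in T(\Lambda_\gamma)$. But an ambient isotopy in the \emph{surface} $\tilde C_\varepsilon$ need not preserve intersection patterns of embedded lines: two lines in the tree $\tilde C$ that meet at a single branch point can become disjoint after the branch is spread out in $\tilde\Gamma$. Indeed the paper explicitly notes after the lemma that the argument does \emph{not} extend to unlinked intersection pairs, so $T_1(\alpha)=T_1(\gamma)$ can fail, and your isotopy cannot be delivering what you claim for $T(\Lambda)$ in general. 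You yourself flag ``does not create or destroy intersections between translates of the line'' as something to check in your obstacle paragraph, but you do not actually resolve it, and for unlinked pairs it is false.

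The paper bypasses all of this. It never compares $T(\Lambda_\alpha)$ with $T(\Lambda_\gamma)$, nor constructs any isotopy. Instead it argues directly at the level of $T_0$ via Remark~\ref{rm:signs}: let $B$ be the $\varepsilon$-neighborhood of $\Lambda_\alpha$ in $\tilde C_\varepsilon$, note that $\Lambda_\gamma\subset B$, and observe that $C^\pm(\alpha)$ and $C^\pm(\gamma)$ agree on $\tilde C_\varepsilon - B$. Shrinking $\varepsilon$, the finitely many points $g[\alpha^{\pm1}]\tilde x$ not already on $\Lambda_\alpha$ lie outside $B$, hence on the same side of both lines. Since $[\alpha]=[\gamma]$ as deck transformations, $g\Lambda_\gamma$ contains both $g[\alpha]\tilde x$ and $g[\alpha^{-1}]\tilde x$; if these are on opposite sides of $\Lambda_\gamma$ then $g\Lambda_\gamma$ must cross $\Lambda_\gamma$, giving $g\in T_0(\gamma)$ directly by Remark~\ref{rm:signs}, with the same sign. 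This is essentially the mechanism you sketch in your ``one point needing care'' paragraph, and it is the entire proof; the ambient isotopy you identify as the main obstacle is simply not needed.
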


\begin{proof}

Let $B$ be the $\varepsilon$-neighborhood of $\Lambda_{\alpha}$ in $\tilde C_{\varepsilon}$, which also contains $\Lambda_{\gamma}$. Moreover, we have $C^{\pm}(\alpha)\cap(\tilde C_{\varepsilon}-B)=C^{\pm}(\gamma)\cap(\tilde C_{\varepsilon}-B)$, i.e. points outside $B$ are in the same side with respect to both $\Lambda_{\alpha}$ and $\Lambda_{\gamma}$. Note that the construction of Lemma \ref{approximation} allows for reducing $\varepsilon$ as necessary, so we can assume that the translates of $\tilde x$ that do not belong to $\Lambda_{\alpha}$ are outside $B$. 

If $g\in T_0(\alpha)$ we observed in Remark \ref{rm:signs} that $g[\alpha^{-1}]\tilde x$ and $g[\alpha]\tilde x$ are on different sides of $\Lambda_{\alpha}$, and since they are not in $B$, they are also on different sides of $\Lambda_{\gamma}$. Since $[\alpha]$ and $[\gamma]$ agree when seen as elements of $\pi_1(\tilde C_{\varepsilon},x)$, we see that $g\Lambda_{\gamma}$ contains both $g[\alpha^{-1}]\tilde x$ and $g[\alpha]\tilde x$, implying that $g\Lambda_{\gamma}$ and $\Lambda_{\gamma}$ intersect. This intersection corresponds to a linked pair by Remark \ref{rm:signs}. We get that $g\in T_1(\gamma)$ by recalling the definition of this set, together with the fact that $\alpha$ and $\gamma$ agree on a neighborhood of their basepoint, so every lift of $\gamma$ is an $\varepsilon$-perturbation of the corresponding lift of $\alpha$ that agrees with it in a neighborhood of its endpoints. With this we conclude that $g\in T_0(\gamma)$.

%the identification between $\mathcal L_x(C)$ and $\mathcal L_x(\Gamma)$.    

The reciprocal argument is analogous. We have that $\epsilon_g(\alpha)=\epsilon_g(\gamma)$ by Remark \ref{rm:signs}, together with the facts that $C^{\pm}(\alpha)\cap(\tilde C_{\varepsilon}-B)=C^{\pm}(\gamma)\cap(\tilde C_{\varepsilon}-B)$ and that $g[\alpha^{-1}]\tilde x$ and $g[\alpha]\tilde x$ lie outside $B$.

\end{proof}

Let $X$ and $Y$ be the strings represented by $\alpha$ and $\gamma$ as in Lemma \ref{approximation}. Then Lemma \ref{sign} gives a bijection between $LP_2(X,X^{-1})$ and $LP_2(Y,Y^{-1})$ that preserves the sign. It also preserves the dot product, by the isomorphism between $\mathcal L_x(C)$ and $\mathcal L_x(\Gamma)$ and Lemma \ref{formula2}. Thus proving Theorem \ref{simple} for $Y$ also implies it for $X$, i.e. we may replace $\alpha$ with $\gamma$ whenever necessary in our proof. We will only be doing this replacement at the points of the argument that require it, namely in the next Lemma \ref{general2}.

The proof of Lemma \ref{sign} clearly does not generalize for intersection pairs that are not linked. It is also possible to show Lemma \ref{sign} by following what happens to an intersection pair during the construction of $\gamma$ in Lemma \ref{approximation}, though some cases may get cumbersome, as well as the assertion about the signs. That approach would give that intersection pairs of types (2) and (3) of $\alpha$ are also present in $\gamma$, maintaining their types. Since $\Gamma$ has valence $3$ at every branching point, $\gamma$ has no intersection pairs of type (1), so the type (1) linked pairs of $\alpha$ will become linked pairs of type either (2) or (3) in $\gamma$. Some of the unlinked intersection pairs of type (1) of $\alpha$ may indeed be removed when passing to $\gamma$ (e.g. a suitable parametrization of a circle with three points identified). We do not need these assertions to show Theorem \ref{simple}, so we will not prove them.

\subsection{Signs and conjugation}

Next we show the main lemma that implies there are no cancellations in the formula for $[X,X^{-1}]$ when $X$ is primitive. After that, we shall finish the details of the proof of Theorem \ref{simple}. Let $\alpha\in\Omega$ be a cyclically reduced non-trivial closed curve, and $x=\alpha(0)$. We consider $C=\alpha(I)$ and $C_{\varepsilon}$ as in the rest of this section, identifying $\mathcal E_x(C)$ with $\tilde C$ embedded in $\tilde C_{\varepsilon}$.

\begin{lemma}\label{general2} Assume that $\alpha$ is primitive, and that $g, h\in T_0(\alpha)$ are such that $[\alpha]h[\alpha]^{-1}h^{-1}$ and $[\alpha]g[\alpha]^{-1}g^{-1}$ are conjugate. Then 
\[\epsilon_g=\epsilon_h. \]
\end{lemma}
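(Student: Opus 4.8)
The plan is to reduce the statement to the results already proved, chiefly Lemma \ref{general1} (which says $c_i(\alpha,g)=c_i(\alpha,h)$ for $i=1,2$ when the two commutators are conjugate and $\alpha$ is primitive) and Lemma \ref{sign} (which allows us to replace $\alpha$ by the trivalent perturbation $\gamma$ of Lemma \ref{approximation} without changing $T_0$, the signs $\epsilon_g$, or the dot products). So I would begin by invoking Lemma \ref{general1} to fix notation: since $[\alpha]g[\alpha]^{-1}g^{-1}$ and $[\alpha]h[\alpha]^{-1}h^{-1}$ are conjugate, we have $c_1:=c_1(\alpha,g)=c_1(\alpha,h)$ and $c_2:=c_2(\alpha,g)=c_2(\alpha,h)$. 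By Remark \ref{r:int length} and the discussion following it, $g$ and $h$ are of the same "orientation type": either both orientation reversing, in which case Lemma \ref{reversing2} already gives $g=h$ and hence $\epsilon_g=\epsilon_h$ trivially, or both strictly orientation preserving, so the remaining work is entirely in the type (3) case with $l(t_g)=l(t_h)$.

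Next I would bring in the geometric characterization of the sign from Remark \ref{rm:signs2}: for $g\in T_0(\alpha)$, $\epsilon_g=1$ iff the lift $\tilde c_2(\alpha,g)$ of $c_2(\alpha,g)$ ending at $b_g(1)$ lies on the left side $C^+(\alpha)$ of $\Lambda_\alpha$ (and symmetrically for $\epsilon_g=-1$ and $C^-(\alpha)$). So the goal becomes: the lifts $\tilde c_2(\alpha,g)$ and $\tilde c_2(\alpha,h)$ of the \emph{same} curve $c_2$ — one ending at $b_g(1)$ inside $g[\alpha]g^{-1}\tilde\beta_g$, the other ending at $b_h(1)$ inside $h[\alpha]h^{-1}\tilde\beta_h$ — lie on the same side of $\Lambda_\alpha$. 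Here is where I expect to use the conjugator. From the analysis in subsection on conjugate dot products we have $\phi=[\gamma_h r\gamma_g^{-1}]$ with $\phi[\alpha]g[\alpha]^{-1}g^{-1}\phi^{-1}=[\alpha]h[\alpha]^{-1}h^{-1}$, and because $c_1(\alpha,g)=c_1(\alpha,h)$ we actually get $r$ constant (this is exactly the content of Lemma \ref{general1}'s proof in the orientation-preserving case: $t$, hence $r$, is constant). With $r$ constant, $\phi=[\gamma_h\gamma_g^{-1}]$, which maps $b_g(1)=\tilde\gamma_g(1)$ to $b_h(1)=\tilde\gamma_h(1)$; being a deck transformation it sends the horizontal lift $\tilde c_2(\alpha,g)$ to the horizontal lift $\tilde c_2(\alpha,h)$. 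Since $\phi\in T^+(\Lambda_\alpha)$ (it preserves the orientation of $\Lambda_\alpha$ where its translate meets it), and deck transformations preserve the ambient orientation of $\tilde C_\varepsilon$, the translate $\phi$ carries $C^+(\alpha)$ near $\Lambda_\alpha$ to $C^+(\alpha)$: more carefully, $\phi\Lambda_\alpha$ and $\Lambda_\alpha$ overlap along a segment and $\phi$ preserves its orientation, so the left side of $\phi\Lambda_\alpha$ matches the left side of $\Lambda_\alpha$ along that segment; since $\tilde c_2(\alpha,g)$ emanates from $b_g(1)$ on one side of $\Lambda_\alpha$ and $\phi$ preserves that, $\tilde c_2(\alpha,h)=\phi\tilde c_2(\alpha,g)$ emanates from $b_h(1)$ on the same side. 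This gives $\epsilon_g=\epsilon_h$.

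The main obstacle I anticipate is making the "$\phi$ preserves the side of $\Lambda_\alpha$" step genuinely airtight, because $\phi\Lambda_\alpha\ne\Lambda_\alpha$ in general — they only share the finite overlap segment $[\alpha]b_h(I)$ (or a point). The subtlety is that "left side of $\Lambda_\alpha$" is defined globally via the two components of $\tilde C_\varepsilon-\Lambda_\alpha$, not just locally, and the lift $\tilde c_2(\alpha,h)$ need not stay in a small neighborhood of $b_h(1)$. To handle this cleanly I would reduce to $\gamma$ via Lemma \ref{sign}, so that $\tilde C_\varepsilon$ has a genuine tree structure with trivalent branching and the sets $C^{\pm}$ behave well, and then argue purely locally at the branching configuration near the overlap segment: whether $g\in T_0$ and the value of $\epsilon_g$ depend, by Remark \ref{rm:signs}, only on which sides of $\Lambda_\alpha$ the two points $g[\alpha]\tilde x$ and $g[\alpha^{-1}]\tilde x$ lie on, and these are detected by the \emph{local} picture at the first branching point where $g\Lambda_\alpha$ departs from $\Lambda_\alpha$ — equivalently by the cyclic order on the boundary circle of the relevant disk $V(c)$, exactly as in the $sign$ computations of Definition \ref{def:lp}. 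Since $\phi$ is an orientation-preserving homeomorphism identifying the two local pictures (the one for $g$ at $b_g(1)$ and the one for $h$ at $b_h(1)$), it matches these cyclic orders, hence $\epsilon_g=\epsilon_h$. I would phrase the final argument in these local terms to avoid any global side-tracking issues.
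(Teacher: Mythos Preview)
Your proposal is correct and follows essentially the same route as the paper's proof: split off the orientation-reversing case via Lemma~\ref{reversing2}, invoke Lemma~\ref{general1} to get $c_i(\alpha,g)=c_i(\alpha,h)$ (so $r,t$ are constant and $\phi=[\gamma_h\gamma_g^{-1}]$ carries the lifts $\tilde c_i(\alpha,g)$ to $\tilde c_i(\alpha,h)$), and then compare signs via the local configuration at $b_g(1)$ and $b_h(1)$, reducing to the trivalent perturbation $\gamma$ of Lemma~\ref{approximation} through Lemma~\ref{sign} to make that local comparison airtight. The paper phrases the key local step as an ``Assumption~1'' (that a final segment of $b_g$ is mapped by $\phi$ to a final segment of $b_h$), which is exactly the trivalence condition you identify; your formulation via sides of $\Lambda_\alpha$ and cyclic order at the branch point is equivalent.
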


\begin{proof}

If $g$ and $h$ reverse orientation this is a consequence of Lemma \ref{reversing2}. So we assume that $g$ and $h$ preserve orientation, since by Remark \ref{r:int length} this is the other possible case. Recall the notation of Equations \eqref{c1h} and \eqref{c1g}, and let $\phi$ be the element defined in equation \eqref{8}. By Lemma \ref{general1} we have \begin{equation} \label{c1c2 - final}
c_i(\alpha,g)=c_i(\alpha,h) \qquad \mbox{for } i=1,2, \end{equation}
so $r$ and $t$ are constant in Equations \eqref{c1h} and \eqref{c1g}, and we have $\phi=[\gamma_h\gamma_g^{-1}]$. Therefore we get that $\phi b_g(1)=b_h(1)$.

Let $\tilde c_1(\alpha, g)$ be the lift of $c_1(\alpha, g)$ starting at $b_g(1)$, and $\tilde c_2(\alpha, g)$ the lift of $\tilde c_2(\alpha, g)$ ending at $b_g(1)$. Then $\tilde c_1(\alpha, g)$ is contained in $\Lambda_{\alpha}$ with positive orientation, and $\tilde c_2(\alpha, g)$ only meets $\Lambda_{\alpha}$ at its endpoint $b_g(1)$, since $g$ preserves orientation. By Remark \ref{rm:signs2}, the sign $\epsilon_g$ is decided by the side of $\Lambda_{\alpha}$ that $\tilde c_2(\alpha, g)$ is on. We may write 
\begin{equation} \label{e-g}
 \epsilon_g = sign(b_g\tilde c_1(\alpha, g), \tilde c_2(\alpha, g)) \end{equation} where we understand we are taking the intersections of these curves with a small enough ball centered at $b_g(1)$, as to follow the definition of sign in Subsection \ref{LPs}. 

Similarly, we take $\tilde c_1(\alpha, h)$ as the lift of $c_1(\alpha, h)$ starting at $b_h(1)$, and $\tilde c_2(\alpha, h)$ as the lift of $\tilde c_2(\alpha, h)$ ending at $b_h(1)$. The same observations hold, in particular \begin{equation} \label{e-h}
 \epsilon_h = sign(b_h\tilde c_1(\alpha, h), \tilde c_2(\alpha, h)) \end{equation}

Since $\phi b_g(1)=b_h(1)$ and deck transformations preserve horizontal lifting, Equation \eqref{c1c2 - final} implies that 
\begin{equation} \label{tripod}
 \phi \tilde c_1(\alpha, g)= \tilde c_1(\alpha, h)\;\;\;\;\; \mbox{and} \;\;\;\;\;  \phi \tilde c_2(\alpha, g)= \tilde c_2(\alpha, h).    \end{equation}
The situation is depicted in Figure \ref{fig:5}. 

\begin{figure}[htbp]
\input{fig5.pic}
\caption{Proof of lemma \ref{general2}}\label{fig:5}
\end{figure}

First we shall complete the proof assuming the following extra condition:

{\sl Assumption 1:} There exists $\eta$ a final segment of $b_g$ so that $\phi\eta$ is a final segment of $b_h$.

Under this assumption, we note that Equation \eqref{tripod} gives that $$ sign(\eta\tilde c_1(\alpha, g), \tilde c_2(\alpha, g))= sign(\phi\eta\cdot\tilde c_1(\alpha, h), \tilde c_2(\alpha, h))  $$ since $\phi$, as a deck transformation, preserves the orientation of $\tilde C_{\varepsilon}$. When we combine this with Equations \eqref{e-g} and \eqref{e-h}, Assumption 1 gives us $$\epsilon_g=\epsilon_h $$ as desired.

Now observe that Assumption 1 holds if $t_g(1)=t_h(1)$ has valence $3$ in $C$: for then $b_g(1)$ and $b_h(1)$ would have valence $3$ in $\tilde C$, and since $\phi$ is bijective and satisfies Equation \ref{tripod}, we get that a small enough final segment of $b_g$ must be mapped by $\phi$ to a final segment of $b_h$. Figure \ref{fig:5} depicts this case.

Changing the basepoint of $\alpha$ if necessary, we consider the curve $\gamma$ in Lemma \ref{approximation}. By Lemma \ref{sign} we may replace $\alpha$ with $\gamma$ if needed to ensure Assumption 1. 

%see that Assumption 1 holds for the curve $\gamma$ in Lemma \ref{approximation}. we can apply Lemma \ref{sign} to replace $\alpha$ with $\gamma$ 

%In light of this, we may use Lemma \ref{sign} to replace $\alpha$ with $\gamma$ as in Lemma \ref{approximation} if necessary, noting that Assumption 1 holds for $\gamma$. (We may also need to change the basepoint of $\alpha$ previously).   

\end{proof}

We finally complete the proof of Theorem \ref{simple}. Let $X\in S(M)$ be  non-trivial and primitive, and $\alpha$ be a cyclically reduced representative of $X$.

{\sl Proof of Theorem \ref{simple}:} As we pointed out after Definition \ref{def:T0}, we have that $LP_1(X)=LP_2(X,X)\cong LP_2(X,X^{-1})$ which is in correspondence with $T_0(\alpha)$, recalling Lemma \ref{bijection} and Definition \ref{def:T0}. We rewrite the formula for $[X,X^{-1}]$ given in Definition \ref{d:bracket} using this correspondence, Lemma \ref{formula2} and Definition \ref{def:liftsign}, to get

\begin{equation} \label{corchete final}
[X,X^{-1}]=\sum_{g\in T_0(\alpha)} \epsilon_g\{[\alpha]g[\alpha]^{-1}g^{-1} \} 
\end{equation} where $\{[\alpha]g[\alpha]^{-1}g^{-1} \}$ stands for the conjugacy class of $[\alpha]g[\alpha]^{-1}g^{-1}$. 

We must show that if $[X,X^{-1}]=0$ then $LP_1(X)=\emptyset$, or equivalently, $T_0(\alpha)=\emptyset$. We show it by contradiction: we assume that $[X,X^{-1}]=0$ and $T_0(\alpha)$ is nonempty. Since $\mathcal S(M)$ is a free abelian group of basis $S(M)$, there must be cancellations in the second term of Equation \eqref{corchete final}, so there must be $g,h\in T_0(\alpha)$ with
$$\{[\alpha]g[\alpha]^{-1}g^{-1} \}=\{[\alpha]h[\alpha]^{-1}h^{-1}\} \quad \mbox{ and } \quad \epsilon_g=-\epsilon_h  $$ contradicting Lemma \ref{general2}.

\begin{flushright}
$\Box$
\end{flushright}

\vskip 1cm

\bibliography{biblio}

\bibliographystyle{amsalpha}

\end{document}